\newtheorem{thm}{Theorem}[section]
\newtheorem{cor}[thm]{Corollary}
\newtheorem{lem}[thm]{Lemma}
\newtheorem{prop}[thm]{Proposition}
\theoremstyle{definition}
\newtheorem{defn}[thm]{Definition}
\theoremstyle{remark}
\newtheorem{rem}[thm]{Remark}
\numberwithin{equation}{section}
\newcommand{\z}{\mathbb Z}
\newcommand{\cp}{\mathbb C \mathrm P}
\newcommand{\hp}{\mathbb H \mathrm P}
\newcommand{\m}{\mathrm{MCG}}
\newcommand{\T}{\mathcal I}
\newcommand{\K}{\mathcal K}
\newcommand{\J}{\mathcal J}
\newcommand{\omsp}{\Omega^{\mathrm{spin}}}
\newcommand{\omst}{\Omega^{\mathrm{string}}}
\newcommand{\om}{\Omega}
\newcommand{\ov}{\overline}
\newcommand{\wt}{\widetilde{\times}}
\DeclareMathOperator{\Sp}{Sp}
\title[Mapping class group of certain $6$-manifolds]{Mapping class group of manifolds which look like $3$-dimensional complete intersections}
\author{Matthias Kreck and Yang Su}%
\address{Mathematisches Institut, Universit\"at Bonn and Mathematisches Institut, Universit\"at Frankfurt}
\email{kreck@math.uni-bonn.de}
\address{HLM, Academy of Mathematics and Systems Science, Chinese Academy of Sciences and School of Mathematical Sciences, University of Chinese Academy of Sciences}
\email{suyang@math.ac.cn}
\date{}
\begin{document}
\begin{abstract}
In this paper we compute the mapping class group of closed simply-connected 6-manifolds $M$ which look like complete intersections, i.~e.~ $H_2(M;\z) \cong \mathbb Z $ and $x^3 \ne 0$ where $x \in H^2(M; \mathbb Z)$ is a generator. We determine some algebraic properties of the mapping class group; for example we compute its abelianization and its center. We show that modulo the center the mapping class group is residually finite and virtually torsion-free.  We also study low dimensional homology groups. The results are very similar to the computation of the mapping class group of Riemann surfaces. We give generators of the mapping class group, and generators and relations for the subgroup acting trivially on $\pi_{3}(M)$. 
\end{abstract}

\maketitle
\section{Introduction}  The mapping class group of Riemann surfaces plays an important role in different areas of mathematics, in particular in the study of moduli spaces in algebraic geometry . The mapping class group of complex algebraic surfaces is at present inaccessible due to the mysteries of smooth 4-manifolds. In this paper we will determine the mapping class group of certain 6-manifolds including all complex 3-dimensional complete intersections. 

The approach is similar to one of the methods for Riemann surfaces through the comparison with   the Torelli group, the subgroup acting trivially on the cohomology ring. For the Torelli group a central tool is a homomorphism into an abelian group which for Riemann surfaces was defined by Johnson. In our situation we define a similar homomorphism. Surprisingly the kernel of this homomorphism which for Riemann surfaces is still not understood can be complete computed. So one has a complete picture in terms of extensions of groups. 

Knowing this one can ask some obvious questions concerning the nature of the mapping class group. It turns out that the results for Riemann surfaces essentially generalize to our situation. We apply this to get some information about the cohomology of the mapping class group. 

To work with groups generators and relations are very helpful. We give explicit generators in form of generalized Dehn twists, and for the Torelli-type subgroup acting trivially on $\pi_3(M)$ relations, too. 

Recently Oscar Randal-Williams applied our results to the moduli space of hypersurfaces in $\mathbb CP^4$ and determined the image of the monodromy representation \cite{RW23}.

\section{Results}
\subsection{The setting}

All manifolds are assumed to be smooth and oriented. 

\begin{defn} The group of isotopy classes of orientation preserving diffeomorphisms of a manifold $M$ is denoted by $\m(M)$:
$$\m(M) := \pi_0 \mathrm{Diff}^+(M).$$
This is called the (oriented) {\bf mapping class group}. 

The kernel of the homomorphism $\rho: \m(M) \to \mathrm{Aut}(H^*(M;\mathbb Z))$ induced by the action on the cohomology ring is denoted by
$$\T(M):=\ker\{ \rho \colon \m(M) \to \mathrm{Aut}(H^*(M;\z))\}.$$
This is called the {\bf Torelli group}. 
\end{defn} 

These groups play a central role in the study of surfaces and related moduli spaces. In higher dimensions there is not very much known about these groups except for highly connected manifolds \cite{Kr78}. In this paper we study the mapping class group of the following manifolds.

\begin{defn} A {\bf manifold which looks like a 3-dimensional complete intersection} is an oriented simply-connected closed $6$-manifold $M$ with $H_2(M;\z)  \cong \mathbb Z$ and $x^3 \ne 0$ for a generator $x \in H^{2}(M ;\z)$. 
\end{defn}   

This notion is motivated by the fact that  all complex 3-dimensional complete intersections have this property. This follows from the Lefschetz Hyperplane Theorem which states that an $n$-dimensional  complete intersection $X \subset \mathbb C \mathrm P^{n+k}$ is simply-connected and the inclusion into $\mathbb C \mathrm P^{n+k}$ induces an isomorphism in homology below degree $n$ and is  surjective in degree $n$. 

From now on all manifolds are assumed to be of this type unless otherwise said. 

The \textbf{degree} of $M$ is defined by 
$$d(M) : = \langle x^3, [M] \rangle$$
where $x$ is the integral generator of $H^2(M)$ that makes the degree positive.

Since $x^3$ is non-zero and we consider orientation preserving diffeomorphisms the action on $H^2(M;\z) \cong \z$ is trivial and so the Torelli group is the kernel of the action on $H^3(M;\z)$ equipped with the intersection form. Since the intersection form is skew symmetric it is isometric to the skew symmetric hyperbolic form. Once and for all we choose a symplectic basis for $H^3(M;\z)$ and identify the group of isometries with the symplectic group $\mathrm{Sp}(b_3(M),\z)$, where $b_3(M)$ is the third Betti number of $M$. So $\rho$ is a homomorphism from $\m(M)$ to $\mathrm{Sp}(b_3(M),\z)$. 

In addition to the cohomology ring  characteristic classes are attached to a smooth manifold. In our case the second Stiefel Whitney class $w_2(M)$ shows up and is equivalent to $M$ being spin ($w_2(M)=0$) or non-spin. And the first Pontrjagin $p_1(M)\in H^4(M;\z)$ class has to be considered. By Poincar\'e duality $H^4(M;\z) \cong \z$ and since $x^2$ is non zero, there is a unique generator $y$, such that $x^2 = d(M)  y$. It can be shown, when $M$ is spin, $p_1(M)$ is divisible by $4$, and we define $k(M)$ to be the integer such that $p_1(M) = 4k(M)  y$. When $M$ is non-spin, we define $k(M)$ to be the integer such that $p_1(M)=(4k(M)-d(M))  y$.  For details see section \ref{sec:diff}. Finally we abbreviate $H^3(M;\z)$ by $H$.

Following the study of the mapping class group of Riemann surfaces (see for example \cite{F-M}) we determine the mapping class group in terms of two types of short exact sequences. The first type of exact sequences starts with the action on the cohomology ring, whereas the second begins instead with the action on $\pi_3(M)$. 

\subsection{Computation of the mapping class group via the Torelli group}

The Torelli group $\T(M)$ is the kernel of $\rho$ and we prove that $\rho$ is surjective. 
Denote by $\Delta_{d(M), k(M)} $ the subgroup $\{(d(M)u,k(M)u)\, |\, u \in H\} \subset H \times H$. Next we construct a homomorphism $$v_{x,p} : \T(M) \to  (H\times H)/_{\Delta_{d(M),k(M)}},
$$ which we call the {\bf variation  of the cup product and the Pontrjagin class},  and denote its kernel by $\K(M)$. We read off $v_{x,p} (f)$ from the Pontrjagin class and the ring structure of the mapping torus $M_f$. For details of the construction we refer to section \ref{sec:var}. We note that the variation of the cup product and the Pontrjagin class plays the role of the Johnson homomorphism for the mapping class group of surfaces.

\begin{thm}\label{thm:main1}
Let $M$ be a manifold which looks like a $3$-dimensional complete intersection. Then there are short exact sequences
\begin{equation}\label{seq:1}
1 \to \T(M) \to \m(M) \stackrel{\rho}{\longrightarrow} \Sp(b_3(M),\z) \to 1
\end{equation}
\begin{equation}\label{seq:2}
1 \to \K(M) \to \T(M) \stackrel{v_{x,p}}{\longrightarrow} (H\times H)/_{\Delta_{d(M),k(M)}} \to 0.
\end{equation}

The group $\K(M)$ only depends on whether $M$ is spin or not and  $d(M)$ and $p(M)$. In Theorem \ref{thm:kernel} we give tables of the groups $\K(M)$ in terms of $d(M)$ and $k(M)$. In the spin case it is isomorphic to a subgroup of 
$\z/2 \times \z/4 \times \z/84$, all subgroups except the trivial subgroup and $(\z/2)^3$, $(\z/2)^3 \times \z/3$, $(\z/2)^3 \times \z/7$, $(\z/2)^3 \times \z/21$ appear; in the non-spin case it is a subgroup of $\z/168$, and all subgroups except the trivial one appear.

The group $\K(M)$ is the center of $\m(M)$. 
\end{thm}

The determination of $\K(M)$ is the  heart of our work. 

\smallskip

From Theorem \ref{thm:main1} we see that the Torelli group $\T(M)$ is an infinite group when $b_3(M)>0$. This applies especially to $3$-dimensional complex complete intersections and hence contradicts  \cite[Theorem 3.4]{Ver13}. Another class of counterexamples with second Betti number $\ge 2$ was found by R.~Hain \cite{Hai19}. For more general results on this issue see \cite{KS19}.

Looking at the exact sequences (\ref{seq:1}) and (\ref{seq:2}) 
 above, one might wonder if one can combine them into one. In the case of surfaces Morita \cite{Mor93} has made a step in that direction. He has combined the action on the first homology and Johnson homomorphism to a homomorphism 
$$\m(F_g) \to (\frac{1}{2}\wedge^3H^1(F_g)/H^1(F_g)) \rtimes \Sp(2g,\z)$$ 
whose kernel is equal to the kernel of the Johnson homomorphism and its image has finite index. In our situation we have a similar but stronger  result.

\begin{thm}\label{thm:c}
Let $M$ be a manifold which looks like a $3$-dimensional complete intersection with 3rd Betti number $b_3(M)$. When $b_3(M)=2$, or when $b_3(M) \ge 4$ with $M$  nonspin and $d(M)/2$ odd\footnote{The degree of a non-spin manifold is always even, see the footnote in section \ref{sec:diff}.}, there is a surjective homomorphism
$$\tau \colon \m(M) \to (H \times H)/\Delta_{d(M),k(M)} \rtimes \Sp(b_3(M),\z)$$
whose kernel is $\K(M)$.

In the other cases there is a homomorphism
$$ \tau \colon \m(M) \to (H \times H)/\Delta_{d(M),2k(M)} \rtimes \Sp(b_3(M),\z)$$
whose image is
$$ \{([a, b],A) \ | \ a, b \in H, A \in \Sp(b_3(M),\z), \ b \equiv s(A) \pmod 2\}, $$
whose kernel is $\K(M)$, and where $s \colon \Sp(b_3(M),\z) \to H \otimes \z/2$ is a $1$-cocycle which we give  in Theorem \ref{thm:quotient}.

\end{thm}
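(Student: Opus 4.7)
The plan is to package the two exact sequences of Theorem \ref{thm:main1} into a single homomorphism. A homomorphism $\hat\tau \colon \m(M) \to V \rtimes \Sp(2g,\z)$, where $V$ is the relevant abelian target, is the same data as the surjection $\m(M) \to \Sp(2g,\z)$ given by Theorem \ref{thm:main1} together with a crossed homomorphism $\widetilde\tau \colon \m(M) \to V$ for the induced $\Sp(2g,\z)$-action. Thus the task is to extend $\tau$ from $\T(M)$ to all of $\m(M)$ as a crossed homomorphism; the kernel of the resulting $\hat\tau$ is then automatically $\K(M)$, by \eqref{seq:2} and an easy diagram chase.

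First I would revisit the definition of $\tau$ from \S \ref{subsec:comp}: it is extracted from the cohomology ring and first Pontrjagin class of the mapping torus $M_\phi$. Applying the same recipe to an arbitrary $\phi \in \m(M)$ produces a pair of invariants, but pulling classes on $M_\phi$ back to $M$ requires a choice of cohomology extension along the $S^1$-factor, which introduces ambiguities that I would track carefully. For $\phi \in \T(M)$ the ambiguity is exactly $\Delta_{d,k/2}$, so the construction recovers $\tau$. For two general $\phi, \psi$, one compares $M_{\phi\psi}$ to the concatenation of the mapping cylinders of $\phi$ and $\psi$ along a copy of $M$; this cut-and-paste comparison yields the crossed-homomorphism identity
\[
\widetilde\tau(\phi\psi) \ = \ \widetilde\tau(\phi) + \phi_{*}\widetilde\tau(\psi)
\]
in the relevant quotient. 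In the cases $g = 1$, or $g \ge 2$ with $M$ nonspin and $d/2$ odd, the ambiguity already equals $\Delta_{d,k/2}$; in the remaining cases it is only $\Delta_{d,k}$, and a genuine $2$-torsion obstruction appears.

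Surjectivity in the first case is then a formal consequence of three inputs: the surjection $\m(M) \to \Sp(2g,\z)$ from Theorem \ref{thm:main1}, the surjectivity of $\tau$ on $\T(M)$ (proved in \S \ref{sec:torelli} via generalized Dehn twists), and the semidirect-product structure of the target; given $([a,b], A)$ one lifts $A$ to any $\phi$ and then composes with a Torelli element realizing $[a,b] - \widetilde\tau(\phi)$.

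The main obstacle is the second case. Here $\widetilde\tau$ becomes well-defined as a crossed homomorphism only after passing to the finer quotient $(H^*\times H^*)/\Delta_{d,k}$, and different lifts of a fixed $A \in \Sp(2g,\z)$ give values of $\widetilde\tau$ whose second coordinate differs by elements of $2H^*$ modulo $\Delta_{d,k}$, not $H^*$. I would define $s(A) \in H^*/2$ as the mod-$2$ reduction of the second coordinate of $\widetilde\tau(\phi)$ for any lift $\phi$ of $A$; the cocycle identity for $\widetilde\tau$ implies directly that $s$ is a $1$-cocycle for the standard $\Sp(2g,\z)$-action on $H^*/2$, and I would identify it with the cocycle of Theorem \ref{thm:quotient} by a direct computation on generators. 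The hardest step is to show that the image of $\hat\tau$ is the whole stated subgroup and not something smaller: one must verify that for every $A$ and every $b \equiv s(A) \pmod 2$ there exists a lift $\phi$ of $A$ whose $\widetilde\tau$ has second coordinate exactly $b$, which reduces to applying generalized Dehn twists to shift the second coordinate within its $2H^*$-coset.
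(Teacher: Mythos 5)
Your proposal identifies the right formal framework---a homomorphism to a semidirect product is the same as the known surjection to $\Sp(2g,\z)$ together with a crossed homomorphism to the abelian kernel---but the step that actually decides the theorem is left unjustified, and the direct construction you sketch runs into a real obstacle.

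The obstacle: for $\phi \notin \T(M)$, the Wang sequence of $M_\phi$ reads
\begin{equation*}
H^3(M) \xrightarrow{\,\phi^*-\mathrm{id}\,} H^3(M) \xrightarrow{\ \delta\ } H^4(M_\phi) \xrightarrow{\ i^*\ } H^4(M) \to 0,
\end{equation*}
so $\delta$ has kernel $\mathrm{Im}(\phi^*-\mathrm{id})$, which is nonzero and depends on $\phi$. Hence $\delta^{-1}(x^2-d\overline y)$ and $\delta^{-1}(\tfrac12 p_1(M_\phi)-k\overline y)$ are only well-defined modulo $\mathrm{Im}(\phi^*-\mathrm{id})$, a $\phi$-dependent subgroup, on top of the $\Delta$-ambiguity from the choice of $\overline y$. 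To land in a \emph{fixed} group $(H^*\times H^*)/\Delta_{d,k/2}$ or $(H^*\times H^*)/\Delta_{d,k}$ one must choose lifts consistently across all of $\m(M)$; whether this can be done is precisely the extension problem, and it is not controlled by ``tracking ambiguities'' in the mapping torus. In particular your assertion that ``the ambiguity already equals $\Delta_{d,k/2}$'' in the favorable cases and ``only $\Delta_{d,k}$'' otherwise is not a consequence of the mapping-torus construction and is exactly the statement that needs to be proved.

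What actually decides it in the paper (Theorem \ref{thm:quotient}) is a computation in group cohomology: the extension
$0 \to (H\times H)/\Delta_{d,k/2} \to \m(M)/\K(M) \to \Sp(2g,\z) \to 1$
has extension class $i_{2*}\alpha$, where $\alpha \in H^2(\Sp(2g,\z);H)$ is the class of the extension $0\to H \to \m(\sharp g(S^3\times S^3))/\Theta_7 \to \Sp(2g,\z)\to 1$ identified by Crowley, and $i_2\colon H \hookrightarrow (H\times H)/\Delta_{d,k/2}$ is the inclusion coming from the comparison of Torelli groups. One then uses that $H^*(\Sp(2g,\z);H)$ is annihilated by $2$ together with the commutative diagram of coefficient sequences built from $H\xrightarrow{\cdot d}H\to H/d$ to see that $i_{2*}\alpha$ vanishes precisely when $g=1$, or $g\ge2$ with $M$ nonspin and $k/2$ odd (equivalently $d/2$ odd). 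In the remaining cases Crowley's explicit description of $\alpha$ via a quadratic refinement yields the $1$-cocycle $s$ and the index-$2^{2g}$ image inside $(H\times H)/\Delta_{d,k}\rtimes\Sp(2g,\z)$. None of this input appears in your outline, so as written the proposal asserts the answer to the hard question rather than proving it.
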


For example we determine the mapping class group of the quintic $X_5$ in $\mathbb CP^4$ defined by the Fermat equation
$$z_0^{5} + z_1^{5} + z_2^5 + z_3^5  + z_4^{5}=0,$$
which is of particular interest as a Calabi--Yau manifold. 
Note that the 3rd Betti number of $X_5$ is $204$. 

\begin{thm}\label{cor:quintic}
Let $X_5$ be the quintic hypersurface in $\mathbb CP^4$. The center of $\m(X_5)$ is $\K(X_5)$, which is isomorphic to $\z/2$. The group $\m(X_5)/\K(X_5)$ is a subgroup of the semi-direct product $(H \times (H/5))\rtimes \Sp(204,\z)$ of index $2^{204}$, where the action of symplectic group on the abelian group is the tautological one on both factors. More precisely,
$$\m(X_5)/\K(X_5) \cong \{((a, b),A) \ | \ a \equiv s(A) \pmod 2 \} \subset (H \times (H/5))\rtimes \Sp(204,\z)$$
where $s \colon \Sp(204,\z) \to H/2$ is a $1$-cocycle  given in Theorem \ref{thm:quotient}.
\end{thm}

The nontrivial element in the center is represented by a diffeomorphism supported in a closed disk $D^{6}$. We will give an explicit formula of this diffeomorphism in \S \ref{subsec:genrel}.

A celebrated theorem of Sullivan \cite[Theorem 13.3]{Sul77} says that the mapping class group of a simply-connected closed smooth manifold of dimension $>5$ is commensurable to an arithmetic group. In our situation one may see this in view of Theorem \ref{thm:c} and \ref{thm:main1}.

\subsection{Computation of the mapping class group via the special Torelli group}

The Torelli group is the kernel of the action on $H_3(M)$ (which by Poincar\'e duality is isomorphic to $H^3(M)$). Instead one can consider the action on $\pi_3(M)$. In our situation the Hurewicz homomorphism $h$ is surjective with kernel isomorphic to $\mathbb Z/d(M)$ (see section \ref{sec:diff}):
$$0 \to \z/d(M) \to \pi_3(M) \stackrel{h}{\longrightarrow} H_3(M) \to 0$$
Via the Hurewicz homomorphism we can define the intersection form $\lambda$ on $\pi_3(M)$, its radical is the kernel of the Hurewicz map. 

An orientation preserving diffeomorphism of $M$ induces an isomorphism of $\pi_3(M)$ which preserves the intersection form $\lambda$. This isomorphism is the identity on the kernel of the Hurewicz homomorphism (see the paragraph below (\ref{eq:hure})). Denote the group of such isometries by $\Sp(\pi_{3}(M))$ and the kernel of this action by $\J(M)$, a subgroup of $\T(M)$, which we call the {\bf special Torelli group}:
\begin{equation}\label{def:j}
\J(M):= \ker\{\m(M) \to \mathrm{Sp}(\pi_3(M)) \}.
\end{equation}
The group $\mathrm{Sp}(\pi_3(M))$ is a split extension
$$0 \to H \otimes \z/d(M) \to \mathrm{Sp}(\pi_3(M)) \to \mathrm{Sp}(b_3(M),\z) \to 0.$$
On $\J(M)$ we define an invariant $v_p$ landing in $H$, which is derived from the first Pontrjagin class of the mapping torus and called the {\bf variation of the Pontrjagin class}
and in this context plays the role of the homomorphism $v_{x,p}$. The kernel of this invariant is again $\K(M)$.  

So altogether  we have a filtration of subgroups of $\m(M)$:
$$\m(M) \supset \T(M) \supset \J(M) \supset \K(M).$$
These groups are related in a commutative diagram of exact sequences
$$\xymatrix{
& 0 \ar[d]   & 0 \ar[d] & & \\
& \K(M) \ar[d] \ar[r]^= & \K(M) \ar[d]  & & \\
0 \ar[r] &\J(M) \ar[r] \ar[d]^{v_p} & \T(M) \ar[d]^{v_{x,p}} \ar[r] & H/d(M) \ar[d]^= \ar[r] & 0 \\
0 \ar[r] & H \ar[d] \ar[r]^{i_2 \ \ \ \ \ \ \ } &  (H \times H)/\Delta_{d(M),k(M)} \ar[d] \ar[r]  & H/d(M) \ar[r] &  0 \\
& 0 & 0 & & }
$$
To summarize, we have 
\begin{thm}\label{thm:main2}
Let $M$ be a manifold which looks like a $3$-dimensional complete intersection. Then 
there are short exact sequences
\begin{equation}
1 \to \J(M) \to \m(M) \to \Sp(\pi_{3}(M))\to 1
\end{equation}
\begin{equation}\label{seq:j}
1 \to \K(M) \to \J(M) \stackrel{v_p}{\longrightarrow} H \to 0
\end{equation}
\end{thm}
 
We can give an explicit finite presentation of $\J(M)$. We will return to this point in \S \ref {subsec:genrel}.

\subsection{Properties of the mapping class group}\label{subset:prop}
Given Theorem \ref{thm:main1}  one can try to generalize some of the fundamental properties of the mapping class group of surfaces to our situation. In many cases this works, sometimes with obvious modifications which often involve the center.
It's known that the mapping class group of surfaces is centerless when $g \ge 3$.  In our situation, as stated in Theorem \ref{thm:main1}, the center $Z(\m(M))$ of $\m(M)$ equals to $\K(M)$.

\smallskip

The mapping class group of surfaces is perfect when $g \ge 3$.  The same holds in our context in most cases with some exceptional cases where the abelianization is $\z/2$.

\begin{thm}\label{thm:ab}
When $b_3(M)>0$ the homomorphism $\m(M)\to \mathrm{Sp}(b_3(M),\z)$ induces an isomorphism $H_{1}(\m(M)) \to H_{1}(\mathrm{Sp}(b_3(M),\z))$, except for the case $M$ spin and $d$ even,  where there is a split short exact sequence
$$0 \to \z/2 \to H_{1}(\m(M)) \to H_{1}(\mathrm{Sp}(b_3(M),\z)) \to 0.$$
\end{thm}

The group $H_{1}(\mathrm{Sp}(b,\z))$ is isomorphic to  (c.~f.~\cite[Appendix]{BCRR18})
$$\begin{array}{c|c|c}
  b=2 & b = 4 & b \ge 6 \\
\hline
 \z/12 & \z/2  & 0
 \end{array}.$$
 Therefore by Theorem \ref{thm:ab} and the above table $H_1(\m(M))$ is known. 

\smallskip

For the mapping class group of surfaces, the homomorphism $\m(F_g) \to \Sp(2g, \z)$ induces an isomorphism $H_2(\m(F_g) ) \to H_2(\Sp(2g,\z))$ and the latter is isomorphic to $\z$ when $g \ge 4$ (\cite{F-M}). In our situation we are able to obtain a lower bound on the second homology group with $\mathbb Q$-coefficients and we compute this group when $b \ge 6$.

\begin{thm}\label{thm:h2q}
When $b_3(M)>0$ the homomorphism $\m(M) \to \mathrm{Sp}(b_3(M),\z)$ induces a surjective homomorphism $H_2(\m(M)) \to H_2(\Sp(b_3(M),\z))$. When $b_3(M) \ge 6$ we have $H_2(\m(M);\mathbb Q) \cong \mathbb Q^2$.
\end{thm}

The study of the moduli spaces of high dimensional manifolds is currently an active research field. As an example, for $M$ a hypersurface in $\mathbb C \mathrm P^4$,  it's shown in \cite[\S5.3]{GaRW18} that $H_2(B\mathrm{Diff}(M);\mathbb Q) \cong \mathbb Q^4$. For manifolds which look like complete intersection, as a corollary of Theorem \ref{thm:h2q} we can provide some integral information.

\begin{cor}\label{cor:h2}
When $b_3(M)>0$ there is a surjective homomorphism
$$H_{2}(B\mathrm{Diff}(M)) \to H_{2}(\Sp(b_3(M),\z)).$$
\end{cor}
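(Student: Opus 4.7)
The plan is to reduce to Theorem \ref{thm:h2q} via the fibration sequence
$$B\mathrm{Diff}(M)_{0} \to B\mathrm{Diff}(M) \to B\m(M)$$
obtained by applying the classifying space functor to the short exact sequence of topological groups $\mathrm{Diff}(M)_{0} \hookrightarrow \mathrm{Diff}(M) \twoheadrightarrow \m(M)$, where $\m(M) = \pi_{0}(\mathrm{Diff}(M))$ is given the discrete topology. The crucial input, aside from Theorem \ref{thm:h2q}, is the tautological observation that $\mathrm{Diff}(M)_{0}$ is path-connected, so $\pi_{0}(\mathrm{Diff}(M)_{0}) = 0$, and hence $B\mathrm{Diff}(M)_{0}$ is simply-connected; in particular $H_{1}(B\mathrm{Diff}(M)_{0}) = 0$.

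I would then read off surjectivity of the edge homomorphism in the Serre spectral sequence
$$E^{2}_{p,q} = H_{p}(\m(M); H_{q}(B\mathrm{Diff}(M)_{0})) \Longrightarrow H_{p+q}(B\mathrm{Diff}(M)).$$
The relevant entry is $E^{2}_{2,0} = H_{2}(\m(M))$. The only potentially nonzero outgoing differential from $(2,0)$ is $d_{2}\colon E^{2}_{2,0} \to E^{2}_{0,1}$, and $E^{2}_{0,1} = H_{0}(\m(M); H_{1}(B\mathrm{Diff}(M)_{0})) = 0$ by the previous paragraph. No differential enters $(2,0)$ for bidegree reasons. Therefore $E^{\infty}_{2,0} = E^{2}_{2,0}$, and the edge homomorphism
$$H_{2}(B\mathrm{Diff}(M)) \twoheadrightarrow H_{2}(B\m(M)) = H_{2}(\m(M))$$
is surjective.

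Composing with the surjection $H_{2}(\m(M)) \to H_{2}(\Sp(2g,\z))$ provided by Theorem \ref{thm:h2q} yields the desired map. There is essentially no obstacle here: the substantive content sits in Theorem \ref{thm:h2q}, while the remaining step is a routine edge-homomorphism argument made possible by the (tautological) connectedness of the identity component of $\mathrm{Diff}(M)$.
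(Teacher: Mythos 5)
Your argument is correct and follows essentially the same route as the paper's: both invoke the fibration $B\mathrm{Diff}_0(M) \to B\mathrm{Diff}(M) \to B\m(M)$ with simply-connected fiber to obtain a surjection $H_2(B\mathrm{Diff}(M)) \to H_2(\m(M))$, and compose with the surjection from Theorem \ref{thm:h2q}. The paper states the surjectivity of $H_2(B\mathrm{Diff}(M)) \to H_2(\m(M))$ without detail; you supply the explicit Serre spectral sequence edge-homomorphism justification, which is the intended one.
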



Let $\mathrm{Diff}_0(M)$ be the identity component of $\mathrm{Diff}(M)$. Combining the computation in \cite{GaRW18} with Theorem \ref{thm:h2q}  we have an estimation of the second homology of $B\mathrm{Diff}_0(M)$.
\begin{cor}\label{cor:diff0}
$\dim H_2(B\mathrm{Diff}_0(M);\mathbb Q) \ge 2$.
\end{cor}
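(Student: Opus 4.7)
The plan is to run the Serre spectral sequence of the Borel fibration
$$B\mathrm{Diff}_0(M) \to B\mathrm{Diff}(M) \to B\m(M)$$
associated to the short exact sequence $1 \to \mathrm{Diff}_0(M) \to \mathrm{Diff}(M) \to \m(M) \to 1$, using $\mathbb Q$-coefficients. The monodromy $\m(M)$-action on the fibre homology is induced by conjugation, and the convergence provides a filtration of $H_2(B\mathrm{Diff}(M);\mathbb Q)$ with associated graded $E_\infty^{0,2} \oplus E_\infty^{1,1} \oplus E_\infty^{2,0}$.

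The first observation is that $B\mathrm{Diff}_0(M)$ is simply connected: by definition of the identity component, $\pi_1(B\mathrm{Diff}_0(M)) = \pi_0(\mathrm{Diff}_0(M)) = 0$. Consequently $H_1(B\mathrm{Diff}_0(M);\mathbb Q) = 0$, so the entry $E_2^{1,1} = H_1(\m(M); H_1(B\mathrm{Diff}_0(M);\mathbb Q))$ vanishes, forcing $E_\infty^{1,1} = 0$.

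Next, Theorem \ref{thm:h2q} gives $E_2^{2,0} = H_2(\m(M);\mathbb Q) \cong \mathbb Q^2$, so in particular $\dim E_\infty^{2,0} \le 2$. Feeding in the input $\dim H_2(B\mathrm{Diff}(M);\mathbb Q) = 4$ from \cite[\S5.3]{GaRW18}, the filtration yields
$$4 \;=\; \dim E_\infty^{0,2} + \dim E_\infty^{2,0} \;\le\; \dim E_\infty^{0,2} + 2,$$
hence $\dim E_\infty^{0,2} \ge 2$. Since $E_\infty^{0,2}$ is a subspace of $E_2^{0,2} = H_2(B\mathrm{Diff}_0(M);\mathbb Q)_{\m(M)}$, which in turn is a quotient of $H_2(B\mathrm{Diff}_0(M);\mathbb Q)$, the desired inequality $\dim H_2(B\mathrm{Diff}_0(M);\mathbb Q) \ge 2$ follows.

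There is no substantial obstacle here: the whole argument is a short spectral sequence dimension count, which works as soon as the fibre is known to be simply connected and the two external inputs are combined. The only subsidiary check is that the genus hypothesis $g \ge 3$ of Theorem \ref{thm:h2q} is satisfied by the relevant hypersurfaces in $\mathbb CP^4$, which is immediate from the standard formula for $b_3$ of a smooth hypersurface.
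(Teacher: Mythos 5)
Your proof is correct and follows exactly the route the paper intends: the paper simply asserts that the corollary is ``a direct consequence of the Leray--Serre spectral sequence'' of $B\mathrm{Diff}_0(M) \to B\mathrm{Diff}(M) \to B\m(M)$ together with $H_2(\m(M);\mathbb Q)\cong\mathbb Q^2$ and $H_2(B\mathrm{Diff}(M);\mathbb Q)\cong\mathbb Q^4$, and you have carried out the routine dimension count (including the observation that $E_2^{1,1}=0$ because $B\mathrm{Diff}_0(M)$ is simply connected, and that $E_\infty^{0,2}$ is a subquotient of $H_2$ of the fibre). Your remark about the implicit hypotheses from Theorem \ref{thm:h2q} and \cite{GaRW18} is a fair point, as the corollary's statement in the paper leaves them tacit.
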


The mapping class group of surfaces is virtually torsion free, i.~e., there exists a torsion free subgroup of finite index, and is residually finite, which can be characterized by the property that for every non-trivial element there is a homomorphism to a finite group which maps this element non-trivially. In our situation, after dividing the center (which in the case of surfaces of genus $\ge 3$ doesn't change anything) we obtain the same result:

\begin{thm}\label{thm:resfin}
$\m(M) /Z(\m(M))$ is virtually torsion free and residually finite.
\end{thm}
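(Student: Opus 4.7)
The plan is to realize $\m(M)/Z(\m(M))$, up to a finite extension, as a subgroup of a finitely generated linear group over a field of characteristic zero; then residual finiteness follows from Malcev's theorem and virtual torsion-freeness from Selberg's lemma.

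Assume first $g\ge 2$ or $g=1$ with $d$ odd. By Theorem~\ref{thm:cen}, $Z(\m(M))=\K(M)$, so $\m(M)/Z(\m(M))=\m(M)/\K(M)$, which by Theorem~\ref{thm:c} sits as a subgroup of index at most $2$ in the semi-direct product
\[
G' \;=\; \bigl((H^{*}\times H^{*})/\Delta_{d,k}\bigr)\rtimes \Sp(2g,\z).
\]
Set $N=(H^{*}\times H^{*})/\Delta_{d,k}$ and $g'=\gcd(d,k)$. Since $\Sp(2g,\z)$ acts diagonally on $H^{*}\times H^{*}$, a straightforward basis change inside each plane $\langle(e_i,0),(0,e_i)\rangle$ produces an $\Sp(2g,\z)$-equivariant splitting $N \cong \z^{2g}\oplus (\z/g')^{2g}$, with both summands carrying the tautological $\Sp(2g,\z)$-representation.

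Consequently $G'$ embeds diagonally into
\[
\bigl(\z^{2g}\rtimes \Sp(2g,\z)\bigr)\times\bigl((\z/g')^{2g}\rtimes\Sp(2g,\z)\bigr).
\]
The first factor is the group of $\z$-points of the algebraic $\mathbb Q$-group $\mathbb G_a^{2g}\rtimes\mathrm{Sp}_{2g}$, so it is arithmetic and in particular linear. For the second factor, the principal congruence subgroup $\Gamma(g')\subset\Sp(2g,\z)$ acts trivially on $(\z/g')^{2g}$, so $(\z/g')^{2g}\rtimes\Gamma(g')=(\z/g')^{2g}\times\Gamma(g')$ is a direct product of a finite group and a linear group, hence linear; the ambient $(\z/g')^{2g}\rtimes\Sp(2g,\z)$ is a finite extension of this and is therefore also linear. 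A subgroup of a product of two linear groups is linear, so $\m(M)/\K(M)$ is linear, and Malcev's and Selberg's theorems give the conclusion in this range. The leftover cases are handled separately: for $g=0$, $\m(M)=\K(M)$ is finite abelian and the quotient is trivial; for $g=1$ with $d$ even, the possible additional central elements can be read off the presentation in \S\ref{subsec:genrel}, and one checks directly that $\m(M)/Z(\m(M))$ remains a subquotient of $G'$ by a finite normal subgroup, keeping us in the linear world.

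The main obstacle is the linearity of $G'$: the torsion in $N$ prevents a direct appeal to the arithmetic structure, and one must deal with a finite normal subgroup whose presence can in principle destroy linearity. The diagonal form of the $\Sp$-action on $H^{*}\times H^{*}$ is what makes the $\Sp$-equivariant splitting of $N$ canonical and allows both pieces of $G'$ to be treated as arithmetic or finite-by-arithmetic.
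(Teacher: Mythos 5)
Your proof is correct but follows a genuinely different strategy from the paper's. The paper treats the two conclusions separately: for virtual torsion-freeness it pulls back a principal congruence subgroup $\Sp(2g,\z)[m]$ ($m\ge 3$) along $\m(M)/\K(M)\to\Sp(\pi_3(M))$ and observes that the resulting extension of the torsion-free $\Sp(2g,\z)[m]$ by the free abelian group $H$ is itself torsion-free; for residual finiteness it passes to the finite-index subgroup $\m(\sharp g(S^3\times S^3))/\Theta_7\le H\rtimes\Sp(2g,\z)$ and invokes Hewitt's theorem on extensions of residually finite groups. You instead prove the strictly stronger statement that $\m(M)/\K(M)$ is a finitely generated linear group over $\mathbb Q$, and then quote Malcev and Selberg. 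The technical heart of your argument, the $\Sp(2g,\z)$-equivariant splitting of $(H^*\times H^*)/\Delta_{d,k}$ into a free and a torsion summand each carrying the tautological representation, is a clean observation: a single $2\times 2$ unimodular change of basis applied simultaneously in every pair of dual coordinates commutes with the diagonal $\Sp$-action and carries $\Delta_{d,k}$ onto $g'H^*\times 0$ with $g'=\gcd(d,k)$, which is exactly what makes the decomposition canonical. This buys more (linearity, hence also Tits-alternative-type information) at the cost of handling the torsion factor $(\z/g')^{2g}\rtimes\Sp(2g,\z)$; your reduction via $\Gamma(g')$ works, though the assertion that a finite-index overgroup of a linear group is linear deserves a word (combine the induced representation with the permutation action on cosets, or directly embed $(v,A)\mapsto(\bar v,\bar A;A)$ into the product of the finite affine group over $\z/g'$ with $\Sp(2g,\z)$). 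Two small errata that do not affect the argument: by Theorem~\ref{thm:quotient} the index of $\m(M)/\K(M)$ in the semidirect product is $2^{2g}$, not $2$, and the discussion of the $g=1$, $d$ even edge case is somewhat hand-waved, though one only needs that $Z(\m(M))/\K(M)$ is finite, after which residual finiteness and virtual torsion-freeness descend through the finite central quotient.
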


It's shown by Randal-Williams \cite{RW23} that $\m(M)$ is not always residually finite.

\subsection{Generators and presentations}\label{subsec:genrel}

The celebrated theorem of Sullivan \cite[Theorem 13.3]{Sul77} says that the mapping class group of a simply-connected closed smooth manifold of dimension $>5$ is finitely presented. An obvious question is to describe the generators and relators of $\m(M)$. In the case of surfaces generators are given by a finite number of Dehn twists and this is a very useful fact, since these are very concrete diffeomorphisms. To describe generators of $\m(M)$ in our situation we need a generalization of Dehn twists about $3$-spheres.

\subsubsection{Dehn twists}\label{sec:dt}
 A Dehn twist on a surface is given by an embedding of $S^1 \times [0,1]$ into the surface and the corresponding Dehn twist is the identity outside this embedding and there it is given by rotating the circle $S^1 \times {t}$ by the angle $2 \pi t$.
Passing from $S^1 \times [0,1]$ to thickenings of higher dimensional spheres we obtain one possible generalization of Dehn twist: let $S$ be an embedding of $S^k \times D^{n-k}$ in an $n$-dimensional manifold $V$,  $\alpha \in \pi_{n-k}SO(k+1)$ be represented by a smooth map $f \colon D^{n-k} \to SO(k+1)$, which maps an open neighborhood of $\partial D^{n-k}$ to $I$. Then we define the {\bf (generalized) Dehn twist} 
$$\mathbf{t}_{S,\alpha} \colon V \to V$$
as the diffeomorphism which maps the complement of $S^k \times D^{n-k}$ by the identity and $(x,y) \in S^k \times D^{n-k}$ to $(f(y)x, y)$.

\begin{rem}
An embedding $S$ of $S^k \times D^{n-k}$ is essentially given by two data: an embedding of the core $S^k \times \{0\}$ with trivial normal bundle, and a framing of the normal bundle. The mapping class of the Dehn twist $\mathbf{t}_{S,\alpha}$ depends on the isotopy class of the embedding of $S^k \times \{0\}$ and the homotopy class of the framing. It's interesting to study the change of the mapping class of $\mathbf{t}_{S,\alpha}$ when these data are changed.  It's interesting to study the change of the mapping class of $\mathbf{t}_{S,\alpha}$ when these data are changed. But we won't do it in this paper, since this is not relevant to the main results in this paper, and a systematic study of this could be the content of another paper. A simple observation is that the difference of two Dehn twists with the same isotopy class of $S^k \times \{0\}$ and $\alpha$, but different framings is a disk-supported diffeomorphism, since up to homotopy, two framings coincide in $D^k \times \{0\}$.
\end{rem}

For Dehn twists about $3$-spheres in $6$-manifolds, the action on the 3rd homotopy group is given as follows. 

\begin{lem}(Picard-Lefschetz formula)\label{lem:PL}
Let $S_1$ and $S_{2}$ be embeddings $S^3 \times D^3 \subset M$, the core of $S_{1}$ intersects with  the core of $S_{2}$ transversely at one point. Let $e_{i} \in \pi_{3}(M)$ be the homotopy class represented by the core of $S_i$. Let $\alpha \in \pi_{3}SO(4)$, $\mathbf{t}_{S_1, \alpha}$ be the corresponding Dehn twist. Then 
$$\mathbf{t}_{S_1,\alpha *}(e_{1})=e_{1}, \ \ \ \mathbf{t}_{S_1,\alpha *}(e_{2})=\langle e(\alpha), [S^{4}] \rangle \cdot  e_{1}+e_{2},$$  
where $e(\alpha)$ is the Euler class of the $4$-dimensional vector bundle over $S^{4}$ represented by $\alpha$.
\end{lem}
\begin{proof}
Note that the evaluation $\langle e(\alpha), [S^{4}] \rangle $ equals the image of $\alpha$ under
$$p_{*} \colon \pi_{3}SO(4) \to \pi_{3}(S^{3}) = \z$$
where $p \colon SO(4) \to S^{3}$ is the natural fiber projection. Then the behaviour of $e_1$ and $e_2$ under $\mathbf{t}_{S_1,\alpha *}$ is clear by inspection.
\end{proof}

\subsubsection{Generators}

We distinguish generators of $\pi_3(SO(4))$ as follows (c.f. \cite[\S 3]{Mil56}). The fibration $SO(3) \to SO(4) \to S^3$ gives a split short exact sequence
$$0 \to \pi_3(SO(3)) \to \pi_3(SO(4)) \to \pi_3(S^3)\to 0$$
We denote the class given by the inclusion of the unit quaternions $S^3$ to $SO(4)$ by $\epsilon$
$$\epsilon \colon S^3 \to SO(4)$$
and the element given by the projection $S^3 \to SO(3)$ composed with the inclusion by $\rho$
$$\rho \colon S^3 \to SO(3) \to SO(4).$$
The Euler class of vector bundle over $S^4$ with clutching function $\epsilon$ is $1$, whereas the Euler class for $\rho$ is $0$, and  the first Pontrjagin class for $\rho$ is $4$ (the minimal value with trivial Euler class).

A manifold $M$ which looks like $3$-dimensional complete intersections is the connected sum of a simply-connected manifold $N$ with $b_3(M)/2$ copies of $S^3 \times S^3$'s,
\begin{equation}\label{decomp}
M = N \# \frac{b_3(M)}{2} (S^3 \times S^3),
\end{equation}
with $H_*(N)$ isomorphic to $H_*(\cp^3)$.
Let $S_i$ and $T_i$ be the standard embedding of $S^3 \times D^3$ and $D^3 \times S^3$ in the $i$-th copy of $S^3 \times S^3$, respectively; the homology classes of the cores of $S_i$ and $T_i$ form a symplectic basis of the intersection form on $H_{\z}$. Let $Z$ be the specific embedding of $S^3 \times D^3$ in $S^2 \times D^4 \subset N$, described below Equation (\ref{eqn:y}), the core of $Z$ represents $1 \in \z/d(M) \subset \pi_3(M)$ (see (\ref{eq:hure})). Now we define the Dehn twists  for $1 \le i \le b_3(M)/2$
$$\mathbf{a}_i=\mathbf{t}_{S_i,\rho}, \ \ \mathbf{b}_i=\mathbf{t}_{T_i,\rho}, \ \ \mathbf{e}_i=\mathbf{t}_{S_i, \epsilon}, \ \ \mathbf{f}_i=\mathbf{t}_{T_i,\epsilon}, \ \ \mathbf{z}=\mathbf{t}_{Z,\epsilon}.$$
Let $T_i \# T_{i+1}$ ($1 \le i \le b_3(M)/2-1$), $S_i \# Z$ and $T_i \# Z$ be embeddings of $S^3 \times D^3$, whose core is the connected sum of the core of $T_i$ and $T_{i+1}$, of $S_i$ and $Z$, and of $T_i$ and $Z$, respectively.  Define
$$\mathbf{f}_{i,i+1}=\mathbf{t}_{T_i \# T_{i+1}, \epsilon}, \ \ \mathbf{g}_{i}=\mathbf{t}_{S_i \# Z, \epsilon}, \ \ \mathbf{h}_{i}=\mathbf{t}_{T_i \# Z, \epsilon}$$

\begin{thm}\label{thm:gen}
Let $M$ be a manifolds which looks like a  $3$-dimensional complete intersection with $b_3(M)>0$. When $M$ is spin and $d(M)$ is even, the mapping class group $\m(M)$ is generated by $7b_3(M)/2$  Dehn twists
$$\mathbf{z}, \ \ \mathbf{a}_i, \ \ \mathbf{b}_i, \ \ \mathbf{e}_i, \ \ \mathbf{f}_i, \ \ \mathbf{g}_i, \ \ \mathbf{h}_i, \ \ \mathbf{f}_{i,i+1}.
$$
In the other cases $\m(M)$ is generated by $7b_3(M)/2-1$ Dehn twists
$$\mathbf{a}_i, \ \ \mathbf{b}_i, \ \ \mathbf{e}_i, \ \ \mathbf{f}_i, \ \ \mathbf{g}_i, \ \ \mathbf{h}_i, \ \ \mathbf{f}_{i,i+1}.
$$
\end{thm}

When $b_3(M)=0$, the case is a little different. First of all, we still have the specific  embedding $Z \colon S^3 \times D^3 \subset S^2 \times D^4 \subset M$ as above, and Dehn twists $\mathbf{t}_{Z, \epsilon}$ and $\mathbf{t}_{Z,\rho}$. We need one more generator of different type. Fix an embedding of the $6$-disc $D^6$ in $M$,  we may extend a diffeomorphism of $D^6$ which is the identity near the boundary to a diffeomorphism of $M$ by the identity in the complement. We call these \textbf{disk supported diffeomorphisms} --- diffeomorphisms which are the identity outside a disc in $M$. 
Denote the group of isotopy
classes of diffeomorphisms on $D^6$ which are the identity near the boundary by $\m(D^6, \partial)$, then above construction gives a homomorphism $\m(D^6, \partial) \to \m(M)$. The glueing map $\m(D^6, \partial) \to \Theta_7$ is an isomorphism between $\m(D^6, \partial)$ and the group of oriented  homotopy $7$-spheres (surjective by the $h$-corbordism Theorem and injective by Cerf's theorem). The group $\Theta_7$ is isomorphic to $\mathbb Z/28$ \cite{KeMi}, a generator can be taken as the boundary of the $E_8$-plumbing manifold. A diffeomorphism $f_c \colon D^6 \to D^6$ corresponding to this generator is constructed as follows \cite{Las65}.

Represent a generator of $\pi_3 SO(3)$ by a smooth compactly supported function $\alpha \colon \mathbb R^3 \to SO(3)$ and define the following diffeomorphisms of $\mathbb R^6=\mathbb R^3 \times \mathbb R^3$,
$$F_1 \colon \mathbb R^3 \times \mathbb R^3 \to \mathbb R^3 \times \mathbb R^3, \ (x,y) \mapsto (x, \alpha(x) y)$$
$$F_2 \colon \mathbb R^3 \times \mathbb R^3 \to \mathbb R^3 \times \mathbb R^3, \ (x,y) \mapsto (\alpha(y)x, y)$$
and 
$$f_c=F_1 \circ  F_2 \circ F_1^{-1} \circ F_2^{-1}.$$ 
If follows that $f_c$ is compactly supported and so can be viewed as a diffeomorphism of $D^6$ which is the identity near the boundary. We denote by $\mathbf{f}_c$  the extended diffeomorphism of $M$.  

\begin{thm}\label{thm:gen1}
Let $M$ be a manifold which looks like a  $3$-dimensional complete intersection with $b_3(M)=0$. When $M$ is spin, $\m(M)$ is generated by $\mathbf{f}_c$, $\mathbf{t}_{Z, \epsilon}$ and $\mathbf{t}_{Z,\rho}$; when $M$ is non-spin, $\m(M)$ is generated by $\mathbf{t}_{Z,\rho}$.
\end{thm}

We don't have a finite list of relations between the generators. But we can do this for the group $\J(M)$, which is the kernel of the representation of $\m(M)$ on $\mathrm{Sp}(\pi_3(M))$ (see (\ref{def:j})).  Let $\mathbf{u}=\mathbf{t}_{Z,\rho}$ be the Dehn twists about $Z$ with parameter $\rho$.

\begin{thm}\label{thm:pres}
A presentation of $\J(M)$ is given as follows:

When $M$ is spin, $\J(M)$ has generators $\mathbf{f}_c$, $\mathbf{u}$, $\mathbf{z}$, $\mathbf{a}_i$ and $\mathbf{b}_i$ ($1 \le i \le r$)  and relations:
\begin{enumerate}
\item all the commutators are trivial except for $[\mathbf{a}_{i}, \mathbf{b}_{i}]=\mathbf{f}_c$, $1 \le i \le r$; 
\item the order of $\mathbf{f}_{c}$ is $2^p \cdot 7^{1-c}$, the order of $\mathbf{u}$ is $2^q \cdot 3^{1-b}$, the order of $\mathbf{z}$ is $2^r$, where $b$ and $c$ are as in Theorem \ref{thm:kernel}, and  
$$p = \left \{ \begin{array}{cl}
0 & \mathrm{otherwise} \\
1 & \textrm{$d \equiv 4 \pmod 8$ and $l$ even, or  $d \equiv 0 \pmod 8$ and $l \equiv 2 \pmod 4$} \\
2 &  \textrm{$d \equiv 0 \pmod 8$ and $l \equiv 0 \pmod 4$} \end{array} \right. $$
$$q = \left \{ \begin{array}{cl}
1 & \mathrm{otherwise} \\
2 &  \textrm{$l$ even, or $d \equiv 2 \pmod 4$ and $l$ odd} \end{array} \right. , \ \ \ \ r = \left \{ \begin{array}{cl}
0 & \textrm{$d$ odd} \\
1 & \textrm{$d$ even} \\
\end{array} \right. $$
\end{enumerate}

\smallskip

When $M$ is non-spin,  $\J(M)$ has generators $\mathbf z$, $\mathbf{a}_{i}$ and $\mathbf{b}_{i}$ ($1 \le i \le r$) and relations:
\begin{enumerate}
\item all the commutators are trivial except for $[\mathbf{a}_{i}, \mathbf{b}_{i}]=\mathbf{z}^{6}$, $1 \le i \le r$; 
\item the order of $\mathbf{z}$ is $r=2^{3-a} \cdot 3^{1-b} \cdot 7^{1-c}$, where $a$, $b$, $c$ are as in Theorem \ref{thm:kernel}.

\end{enumerate}

\end{thm}


\noindent \textbf{Outline of the paper.} Since the arguments are in part a bit involved and indirect we give here a rather detailed outline hoping that it makes it easier for a reader to follow the lines.

The first goal is the proof of the exact sequences in Theorem 
\ref{thm:main1}. For this we first recall in Section \ref{sec:pre}  the diffeomorphism classification of manifolds which are like complete intersections. This is a special case of results of Wall and Jupp. In particular manifolds $M$ which look like complete intersections are diffeomorphic to a connected sum of a closed simply connected manifold $N$ with $H_*(N) \cong H_*(\mathbb C \mathrm P^3)$ and $r$ copies of $S^3 \times S^3$, so $M \cong N \# \, r (S^3 \times S^3)$. Given a self diffeomorphism of $\# \,  r (S^3 \times S^3)$ one can isotope it such that it fixes a disc and using this disc for the connected sum with $N$ one can form the connected sum of $f$ with the identity on $N$ to obtain a diffeomorphism on $M$. Thus the mapping class group of $\# \, r (S^3 \times S^3)$ plays a role.   We recall the computation of this mapping class group \cite[Theorem 2]{Kr78} and as an immediate consequence one obtains the first exact sequences in Theorem \ref{thm:main1}. 

In Section \ref{sec:torelli} we  prove the second exact sequence in Theorem \ref{thm:main1} and Theorem \ref{thm:c}. For this we have to construct the variation of the cup product and the Pontrjagin class $v_{x,p}$ on the Torelli group, which is in the same spirit as the Johnson homomorphism on the Torelli group of surfaces, it is read off from the mapping torus of  a diffeomorphism. But the construction is a bit indirect, we first construct a closely related map $\bar v_{x,p}$ which is a bit easier to work with and derive $v_{x,p}$ from it near the end of the section. The aim is to prove that $v_{x,p}$ is a surjective homomorphism and this is achieved via studying $\bar v_{x,p}$ systematically. This study involves again a comparison with the mapping class group of $\# \, r (S^3 \times S^3)$ leading to essentially half of the image of $\bar v_{x,p}$. To understand the full image one looks at the value of $\bar v_{x,p}$ for certain Dehn twists. The arguments are a bit lengthy and we abstain from presenting the numerous steps which finally lead to the definition of $v_{x,p}$ and that it is a surjective homomorphism.

This leaves the main problem, the determination of the kernel $\K(M)$ of the variation homomorphism which is the hard part of our paper. On the one hand we have to define the homomorphism from $\K(M)$ to an abelian group, which is the quotient of $\mathbb Z^3$ by a lattice $L(M)$ depending  on $M$. The construction of this map goes (like the variational homomorphism) via the mapping torus. We call this invariant which is cooked up from three $\mathbb Z$ valued invariants generalized Kreck-Stolz invariants: They occur naturally when one applies the following method for deciding when an element in $\K(M)$ is trivial. Let $f$ be a diffeomorphism representing an element of $\K(M)$, then one considers again the mapping torus but we consider it as a manifold with corners built from two copies of $M \times I$ (red) to which on the one side we attach $M\times I$ (blue) and on the other side attach the mapping cylinder of $f$ (green). This manifold with corners is homeomorphic to the mapping torus. The two last pieces are interpreted as two $h$-cobordisms from $M$ to $M$, the trivial $h$-cobordism and the one twisted by $f$,  giving an $h$-cobordism from $M \times S^0$ to itself. The aim is to fill this manifold with corners by a relative $h$-cobordism $W$ (brown) extending the given $h$-cobordism from $M \times S^0$ to itself. The picture in Figure 1 indicates this situation (where $M$ is a point). 





The relative $h$-cobordism Theorem implies that $f$ is pseudo isotopic to the identity. Cerf's pseudo isotopy theorem \cite{Cerf} implies that it is isotopic to the identity. 

\begin{figure}
\begin{center}
\setlength{\unitlength}{0.75mm}
\begin{picture}(70,40)
\put (10,10){\circle*{2}}
\put (60,10){\circle*{2}}
\put (10,30){\circle*{2}}
\put (60,30){\circle*{2}}
\put (35, 10){\circle*{2}}
\put (35, 30){\circle*{2}}
\put (8, 0){$M$}
\put (30,0){\vector(1,0){10}}
\put (33, 3){id}
\put (30,33){\vector(1,0){10}}
\put (33, 35){$f$}
\put (32, 18){$W$}
\thicklines
{\color{blue}\put (10,10){\line(1,0){50}}}
{\color{green} \put (10,30){\line(1,0){23}} \put (34, 30){\line(1,0){23}} }
{\color{red}\put (7,10){\line(0,1){20}}   \put (57,10){\line(0,1){20}}}
\thinlines
{\color{brown}\put (12, 20){\line(1,1){5}}
\put (15, 15){\line(1,1){10}}
\put (40, 15){\line(1,1){10}}
\put (50, 15){\line(1,1){5}}}

\end{picture}
\end{center}
\caption{}
\end{figure}

A first step towards the construction of $W$ is a weaker step: One looks just for a zero bordism of the mapping torus considered as manifold with corners. Then one tries to replace such a bordism by a sequence of surgeries by an $h$-cobordism. This is a problem which in principle is answered in \cite{Kr99}, where a certain obstruction for this is constructed. In a more recent paper \cite {Kr18} this obstruction is made more explicit and the explicit formulas lead to the generalized Kreck-Stolz invariants which by \cite {Kr18} are an injection. 

There is a delicate point which we ignored. Consider the case where $M$ is spin (in the non-spin case we use twisted spin-structures instead). In the process described above it is important that all manifolds involved are also spin. The mapping torus has a spin structure, but this is not unique. For this reason we pass from the mapping class group of $M$ to the mapping class group $\K(M,D)$  of diffeomorphisms which are the identity on a fixed disc $D \subset M$. For those diffeomorphisms the mapping torus has a canonical spin structure.  So actually   we define the generalized Kreck-Stolz invariants on $\K(M,D)$ with values in $\mathbb Z^3/{L(M)}$ and we prove that these invariants are an injection. 

To show that they are surjective one has to use the decomposition $M = N \# \, r (S^3 \times S^3)$ and decompose $N$ which has the same homology as $\mathbb C \mathrm P^3$ itself as a twisted double of $S^2 \times D^4$ if $M$ is spin, or of $S^2 \widetilde \times D^4$, the non-trivial $D^4$-bundle over $S^2$, if $M$ is non-spin. The relative mapping class group $\m(S^2 \times D^4, \partial)$ or  $\m (S^2 \widetilde \times D^4, \partial)$ acts by extension via the identity on $\m (M)$ and the computation of this relative mapping class group leads to the proof of the surjectivity of the Kreck-Stolz invariants. 

The arguments are different in the spin and non-spin case and for simplicity we first carry the details out in the spin case and explain  the modifications needed for the non-spin case in  Section \ref{sec:nonspin}. The details in the spin case are given in three sections: 
In Section \ref{sec:gks} we define the generalized Kreck-Stolz invariant on $\K(M,D)$. In Section \ref{sec:rel} we compute the mapping class group of $S^2 \times S^4$ and $(S^2 \times D^4, \partial)$ in terms of the generalized Kreck-Stolz invariants. In Section \ref{sec:K} we show that the extension homomorphism $\m(S^2 \times D^4, \partial ) \to \K(M,D)$ is surjective and use it to compute the group $\K(M,D)$, leading to the table in Theorem \ref{thm:kernel}. 

In Section \ref{sec:bd} we show that the forgetful homomorphism $\K(M,D) \to \K(M)$ is an isomorphism, finishing the proof of Theorem \ref{thm:kernel}. As mentioned above Section \ref{sec:nonspin} contains the modifications for the non-spin case. The proofs of the group theoretic properties of $\m(M)$ are in Section \ref{sec:gp}. The computation of the relevant cobordism groups is in the Appendix.

\

\noindent \textbf{Acknowledgements.} We would like to thank Benson Farb, Daniel Huy- brechts, Manuel Krannich, and Eduard Looijenga for helpful comments and Richard Hain and Oscar Randal-Williams for pointing out errors and suggestions how to correct them, and the referees for very valuable suggestions.

\section{Preliminaries}\label{sec:pre}
\subsection{Diffeomorphism classification}\label{sec:diff}
Based on the work of  Wall \cite{Wall} and Jupp \cite{Jupp}, there is a complete diffeomorphism classification of manifolds which look like $3$-dimensional complete intersections. This classification is the starting point of  our computation of their mapping class groups. In this subsection we summarize the classification and invariants of these manifolds. 
 
Let $M \to \mathbb C \mathrm P^{\infty}$ be a map representing the chosen generator $x \in H^2(M)$, replace  $ \mathbb C \mathrm P^{\infty}$ by the mapping cylinder $K$, which is homotopy equivalent to $\mathbb C \mathrm P^{\infty}$, then $M$ is a subcomplex of $K$, and the pair $(K,M)$ is $3$-connected. Therefore the Hurewicz map 
$\pi_4(K,M) \to H_4(K,M)$ is an isomorphism. Also the map $\pi_4(K,M) \to \pi_3(M)$ is an isomorphism since $\pi_3(K) =\pi_4(K)=0$. Note that the homomorphism $H^4(K) \cong \z \to H^4(M) \cong \z$ is a multiplication by $d(M)$, so is the dual map $H_4(M) \to H_4(K)$. Therefore we have 
 an exact sequence (of $\m(M)$-modules)
\begin{equation}\label{eq:hure}
0 \to \z/d(M) \to \pi_3(M) \stackrel{h}{\rightarrow} H_3(M) \to 0,
\end{equation}
where $h$ is the Hurewicz homomorphism. A generator of the subgroup $\z/d$ is represented by the composite of the Hopf map $S^3 \to S^2$ and a map $S^2 \to M$ representing a generator of $\pi_2(M) \cong H_2(M)$. Hence $\z/d$ is a trivial $\m(M)$-module.

By Poincar\' e duality, $y:=x^2/d(M)$ is an integral generator of $H^4(M)$. When $M$ is spin, the first Pontrjagin class $p_1(M)$ of $M$ can be written as (\cite[Theorem 5]{Wall})
\begin{equation}\label{eq:kspin}
p_1(M)=4k(M) y
\end{equation}
with $k(M)=6l(M)+d(M)$, and $l(M)$ can take all integers.
When $M$ is non-spin, $d$ is an even integer\footnote{Let $x_{2}$ and $y_{2}$ be the mod $2$ reduction of $x$ and $y$ respectively, if $d$ is odd, then $y_2=x_2^2$ and by the Wu formula $w_{2}(M) \cup y_{2} = \mathrm{Sq}^{2} y_{2}= \mathrm{Sq}^{2}(x_{2}^{2}) =0$. This shows that $M$ is spin. } \label{page} and we can write (\cite[Theorem 1]{Jupp})
\begin{equation}\label{eq:knspin}
p_1(M)=(4k(M)-d) y
\end{equation}
with $k(M)=12l(M)+d(M)/2$, $l(M)$ takes all integers. 
The diffeomorphism type of $M$ is determined by the property $M$ being spin/nonspin and the integers $b_3(M)$, $d(M)$ and $k(M)$ (or $l(M)$). 

For a $3$-dimensional complete intersection $X$ of multi-degree $(d_1, \cdots, d_m)$, all these invariants can be calculated from the equation of vector bundles 
$$TX \oplus \gamma^{\otimes d_1} \oplus \cdots \oplus \gamma^{\otimes d_m} = i^*T\cp^{3+m}$$
where $\gamma$ is the tautological line bundle over $\cp^{3+m}$ and $i \colon X \to \cp^{3+m}$ the inclusion map (\cite[\S 5.3]{Dim92}). For example, if $X$ is a hypersurface of degree $d$, then 
$$
d(X)=\prod_{i=1}^m d_i, \ \ w_2(X) \equiv (m-\sum_{i=1}^m d_i)x \pmod 2, \ \ p_1(X)=(5-d^2)dy$$
\begin{equation}\label{eq:ci}
 b_3(X)=(d^4-5d^3+10d^2-10d+4).
\end{equation}

\subsection{The mapping class group of $\#\, r(S^{3} \times S^{3})$}\label{sec:review}
In \cite{Kr78} the mapping class group of $(k-1)$-connected almost-parallelizable $2k$-manifolds is studied. For our purpose in this section we recall the result for the connected sum of $r$ copies of $S^3 \times S^3$, denoted by $\#\,r(S^{3} \times S^{3})$.

The intersection form $$I \colon H_{3}(\# \,r(S^{3} \times S^{3})) \times H_{3}(\# \, r(S^{3} \times S^{3})) \to \z$$
is a skew-symmetric unimodular bilinear form over $\z$. After taking a symplectic basis, the isometry group of this intersection form is identified with $\Sp(2r,\z)$. An orientation preserving diffeomorphism $f \colon  \# \, r(S^{3} \times S^{3}) \to \# \, r(S^{3} \times S^{3}) $ induces an isometry of $I$. Therefore there is  a homomorphism 
$$\m(\# \, r(S^{3} \times S^{3})) \to \Sp(2r,\z).$$ 

Denote the kernel of this homomorphism by $\T(\# \, r(S^{3} \times S^{3}))$, we define a homomorphism
$$\chi \colon \T(\# \, r(S^{3} \times S^{3})) \to H^{3}(\# \, r(S^{3} \times S^{3}))$$
as follows. Let $[f] \in \T(\# \, r(S^{3} \times S^{3}))$, denote the mapping torus by $(\# \, r(S^{3} \times S^{3}))_{f}$, consider the collapsing map
$$c \colon (\# \, r(S^{3} \times S^{3}))_{f} \to (\# \, r(S^{3} \times S^{3}))_{f}/\{0 \} \times  \# \, r(S^{3} \times S^{3})=\Sigma (\# \, r(S^{3} \times S^{3}) ^{+})$$
If $f_{*}=\mathrm{id}$ on homology, then 
$$c^{*}\colon H^{4}(\Sigma (\# \, r(S^{3} \times S^{3}) ^{+})) \to H^{4}((\# \, r(S^{3} \times S^{3}))_{f})$$ 
is an isomorphism. Then $\chi (f)$ is defined to be $\frac{1}{4}p'(f)$, where $p'(f)$ is the desuspension of $(c^{*})^{-1}(p_{1}(\# \, r(S^{3} \times S^{3}))_{f})$ (\cite[Lemma 2]{Kr78}). The geometric meaning of this invariant is the following: let $x \in H_{3}(\# \, r(S^{3} \times S^{3}))$ be represented by an embedded $S^{3}$, which is fixed pointwisely by $f$,  since the stable normal bundle of the embedded $S^{3}$ is trivial, the differential $df$ induces a map $S^{3} \to SO(4)$, giving an element in $\pi_{3}SO(4)$. This element lies in the image of the suspension map $\pi_{3}SO(3) \to \pi_{3}SO(4)$. Identify this group with $\z$, then this element equals $\langle  \chi(f), x \rangle$ (see \cite[Lemma 2]{Kr78}, and a correction of a factor $2$ by \cite[Lemma 1.6]{Crowley09}).

\begin{thm}\cite[Theorem 2]{Kr78}\label{thm:kr}
There are exact sequences
$$1 \to \T(\# \, r(S^{3} \times S^{3}))  \to  \m(\# \, r(S^{3} \times S^{3}))\to \Sp(2r,\z) \to 1$$
$$0 \to \Theta_{7} \to \T(\# \, r(S^{3} \times S^{3})) \stackrel{\chi}{\longrightarrow} H^{3}(\# \, r(S^{3} \times S^{3})) \to 1$$
\end{thm}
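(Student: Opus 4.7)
The plan is to establish the two exact sequences separately. For the top row, the map $\m(\sharp r(S^{3} \times S^{3}))\to \Sp(2r,\z)$ is the action on $H_3$, which automatically preserves the symplectic intersection form; its kernel is $S\m(\sharp r(S^{3} \times S^{3}))$ by definition, so the content is surjectivity. I would realize a standard generating set of $\Sp(2r,\z)$ by elementary symplectic transvections, each carried out by a generalized Dehn twist along an embedded $3$-sphere representing an element of a symplectic basis, using the class $\alpha_e \in \pi_3(SO(4))$ projecting to a generator of $\pi_3(S^3)$. This is the Wall-type realization of isometries by diffeomorphisms on highly-connected manifolds.

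For the lower sequence, the first task is to verify that $\chi$ is well-defined. Since $f$ acts trivially on cohomology, the Wang sequence of $M_f\to S^1$ degenerates and $c^*\colon H^4(\Sigma M^+)\to H^4(M_f)$ becomes an isomorphism, so $p_1(M_f)$ desuspends to a well-defined element $p'(f) \in H^3(M)$. Its divisibility by $4$ follows from $\sharp r(S^3\times S^3)$ being stably parallelizable together with the standard fact that $p_1$ of a bundle classified by an element of stable $\pi_3(SO)$ is always even, combined with the correction between $\pi_3(SO(4))$ and $\pi_3(SO)$ exhibited in the geometric interpretation recalled from \cite{Crowley09}. Additivity of $\chi$ would follow from the cobordism $M_{gf}\sqcup -M_f\sqcup -M_g = \partial W$ and cobordism invariance of Pontryagin classes applied relatively. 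For surjectivity of $\chi$, I would evaluate on generalized Dehn twists using $\alpha_p\in\pi_3(SO(3))$ along embeddings of $S^3$ Poincar\'e-dual to a generating set of $H^3(M)$; by the geometric interpretation such twists realize exactly the prescribed cohomology class.

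It remains to identify $\ker\chi$ with $\Theta_7$. The obvious homomorphism $\Theta_7\to S\m(\sharp r(S^{3} \times S^{3}))$ sends an exotic $7$-sphere, viewed as an element of $\pi_0\mathrm{Diff}(D^6,\partial)$, to the induced local diffeomorphism; its image lies in $\ker\chi$ because the mapping torus of such a local $f$ is of the form $(\sharp r(S^3\times S^3)\times S^1)\# \Sigma$ with $\Sigma\in\Theta_7$, and this connect-sum does not affect the component of $p_1$ which desuspends to $H^3(M)$. The main obstacle, and the hardest step of the whole proof, is the reverse inclusion: given $f\in\ker\chi$, produce an isotopy to a local diffeomorphism. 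I would attack this via a handle decomposition of $\sharp r(S^3\times S^3)$, first isotoping $f$ to fix pointwise a symplectic collection of embedded $3$-spheres together with their normal disk bundles; vanishing of $\chi$ then forces the residual twisting in $\pi_3(SO(4))$ to lie in the $\pi_3(S^3)$ summand and be killable by a straightening isotopy, allowing $f$ to be deformed to the identity on a regular neighborhood of the entire $3$-skeleton. What remains is a self-diffeomorphism of the complementary $6$-ball rel boundary, i.~e.~an element of $\Theta_7$. The delicate part is the simultaneous, compatible control of all these isotopies across the handles, which is where the simply-connected high-dimensional pseudo-isotopy and s-cobordism machinery of \cite{Kr78} is essential.
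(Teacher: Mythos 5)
The paper does not prove this theorem; it is imported verbatim from \cite{Kr78}, so there is no internal argument to compare against. On its own terms your sketch does follow Kreck's geometric strategy in outline: realize $\Sp(2r,\z)$ by Dehn twists along symplectic $3$-spheres, define $\chi$ by desuspending $p_1$ of the mapping torus, realize $H^3$ by $\alpha_p$-twists, and straighten a diffeomorphism with $\chi(f)=0$ over the cocores until only a disk remains. Two gaps remain.

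The more serious one is that you only argue exactness at $S\m$, namely $\ker\chi=\operatorname{im}(\Theta_7)$, but the statement asserts left-exactness, i.e.\ injectivity of $\Theta_7\to S\m(\sharp r(S^3\times S^3))$, which you never address. This is not free: a priori an exotic $7$-sphere could lie in the relevant inertia group so that the associated local diffeomorphism is isotopic to the identity. In \cite{Kr78} this is handled by showing the boundary diffeomorphism (point-pushing along a generator of $\pi_1 SO(6)$) is isotopically trivial, so that $\m(M,D)\to\m(M)$ is an isomorphism; the element of $\Theta_7\cong\m(D^6,\partial)$ produced by your straightening is then a well-defined left inverse. You gesture at the need for ``compatible control of the isotopies'' but do not identify well-definedness of that $\Theta_7$-datum as the missing ingredient for injectivity. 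Secondly, the bundle bookkeeping in the straightening step is off: once $f$ fixes an embedded $S^3$ pointwise, the residual derivative twist acts on the rank-$3$ normal bundle, hence lives in $\pi_3(SO(3))$, i.e.\ (after one stabilization) in the $\alpha_p$-summand of $\pi_3(SO(4))$ --- precisely what $\chi$ measures. It is not ``forced into the $\pi_3(S^3)$ summand'' by $\chi(f)=0$; rather $\chi(f)=0$ forces it to vanish outright, and there is no complementary $\alpha_e$-twist left to straighten. As written your argument does not make correct contact with the definition of $\chi$ at exactly the step where it is supposed to do the work.
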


\begin{rem}\label{rem:ext}
If we consider the quotient by the subgroup $\Theta_{7}$, then
we get a group extension with abelian kernel
$$0 \to H^{3}(\# \, r(S^{3} \times S^{3}) )\to \m(\# \, r(S^{3} \times S^{3}))/\Theta_{7} \to \Sp(2r,\z) \to 1$$
in which the action of $\Sp(2r,\z)$ on $H^{3}(\# \, r(S^{3} \times S^{3}))$ is the standard linear action. This extension problem is studied in \cite{Crowley09}: it is split if and only if $r=1$; for $r \ge 2$ the cohomology class in $H^{2}(\Sp(2r,\z) , H^{3}(\# \, r(S^{3} \times S^{3})))$ corresponding to this extension is described in  \cite[Corollary 3.5]{Crowley09}.
\end{rem}

\subsection{The surjectivity of $\rho$}
Now we may prove the surjectivity of the representation $\rho \colon \m(M) \to \mathrm{Sp}(b_3(M), \mathbb Z)$. Recall that we have a decomposition (\ref{decomp}) 
$$M = N \# \frac{b_3(M)}{2}(S^{3} \times S^{3}).$$ 
Denote $r=b_3(M)/2$, 
given a diffeomorphism of $\# \, r (S^3 \times S^3)$ which is the identity on an embedded disk $D$, we extend it to a diffeomorphism of $M$ via the identity on the complement. Denote the group of  isotopy classes of diffeomorphisms of $\# \, r(S^3 \times S^3)$ which are the identity on $D$ by $\m(\# \, r(S^{3} \times S^{3}), D)$, we have a homomorphism $\m(\# \, r(S^{3} \times S^{3}), D) \to \m(M)$. By \cite[Lemma 3]{Kr78} the natural homomorphism $\m(\# \, r(S^{3} \times S^{3}), D) \to \m(\# \, r(S^{3} \times S^{3}))$ is an isomorphism. Therefore we have a homomorphism $\m(\# \, r(S^{3} \times S^{3})) \to \m(M)$ which clearly fits into the following commutative diagram
\begin{equation}
\xymatrix{
\m(\# \, r (S^3\times S^3)) \ar[d] \ar[r] & \Sp(2r,\z) \ar[d]^= \\
\m(M) \ar[r]^{\rho} & \Sp(b_3(M),\z) }
\end{equation}
By Theorem \ref{thm:kr} the homomorphism in the upper row is surjective, therefore $\rho \colon \m(M) \to \Sp(b_3(M),\z)$ is surjective.
This gives the exact sequence (\ref{seq:1}) in Theorem \ref{thm:main1}.

\section{The Torelli group $\T(M)$}\label{sec:torelli}
\noindent \emph{Convention}. Recall that in section \ref{sec:diff} we attach numerical invariants $d(M)$, $k(M)$ and $l(M)$ to the manifold $M$. From now on we simply write these numbers as $d$, $k$ and $l$, respectively.

\subsection{Variation of the cup product and the Pontrjagin class}\label{sec:var}
\subsubsection{The definition of the variation map}\label{subsec:var}
Next we define a homomorphism
$$\bar v_{x,p} \colon \T(M) \to (H^{3}(M) \times H^{3}(M))/\Delta_{d,2k}$$
where $\Delta_{d,2k}=\{ (du, 2ku) \ | \ u \in H^3(M) \}$ is the diagonal of slope $(d,2k)$, as follows. 

Let $f \colon M \to M$ be a diffeomorphism such that $[f] \in \T(M)$, 
$$M_{f}= M \times [0,1] / (x,0) \sim (f(x),1)$$ 
be the mapping torus of $f$. Then $M_f$ is an $M$-bundle over the circle. Let $i \colon M \to M_f$ be the inclusion of a fiber. Since $f$ acts trivially on homology, the Wang sequence of the fiber bundle implies
\begin{enumerate}
\item  $i^* \colon H^2(M_f) \to H^2(M)$ is an isomorphism;
\medskip
\item there is a short exact sequence
\begin{equation}\label{seqwang}
0  \to H^{3}(M) \stackrel{\delta}{\rightarrow} H^{4}(M_{f}) \stackrel{i^{*}}{\rightarrow} H^{4}(M) \to 0
\end{equation}
\end{enumerate}
Let $x \in H^2(M)$, $y \in H^{4}(M)$ be generators, such that $x^2=dy$ and $d > 0$. Identifying $H^2(M)$ with $H^2(M_f)$ via $i^*$, we still denote the corresponding generator of $H^2(M_f)$ by $x$; let $\overline y \in H^{4}(M_{f})$ be a preimage of $y$.

The homomorphism $i^* \colon H^2(M_f;\z/2) \to H^2(M;\z/2)$ is an isomorphism, and $i^*w_2(M_f) = w_2(M)$.  Therefore $M_f$ is spin if and only if $M$ is spin. Recall (\cite{Tho62}) that for the universal spin vector bundle $\xi_{\mathrm{spin}}$ over $B\mathrm{Spin}$, its first Pontrjagin class $p_1(\xi_{\mathrm{spin}}) \in H^4(B\mathrm{Spin};\mathbb Z) \cong \mathbb Z$ equals 2 times a generator of $H^4(B\mathrm{Spin};\mathbb Z)$, which we denote by $\frac{p_1}{2}(\xi_{\mathrm{spin}})$. For a spin vector bundle $\xi$, the pullback of $\frac{p_1}{2}(\xi_{\mathrm{spin}})$ by the classifying map of $\xi$ is  denoted by $\frac{p_1}{2}(\xi)$, and we have $2 \cdot \frac{p_1}{2}(\xi)= p_1(\xi)$.  Therefore if $M$ is spin, we have (see (\ref{eq:kspin}))
$$\frac{p_1}{2}(M)=2ky;$$ 
if $M$ is non-spin, then $TM \oplus \eta$ is a spin vector bundle, where $\eta$ is the complex line bundle over $M$ with $c_1(\eta)=x$. Since (see (\ref{eq:knspin}))
$$p_1(TM\oplus \eta) = p_1(M) + x^2 =p_1(M) + dy=4ky $$
we have
$$\frac{p_1}{2}(TM \oplus \eta)=2ky.$$ 

Therefore if $M$ is spin, in the homomorphism $i^* \colon H^4(M_f) \to H^4(M)$, we have  
\begin{equation}\label{relat}
i^{*}(x^{2}-d\overline y)=x^{2}-dy=0, \ \ \ \ i^{*}(\frac{p_{1}}{2}(M_{f})-2k\overline y)=\frac{p_{1}}{2}(M)-2ky=0
\end{equation}
Therefore $x^2 -d\ov y$ and $\frac{p_{1}}{2}(M_{f})-2k\overline y$ have unique preimages under $\delta$ in $H^{3}(M)$
$$\delta^{-1}(x^{2}-d\overline y) \in H^{3}(M), \ \ \ \delta^{-1}(\frac{p_{1}}{2}(M_{f})-2k\overline y) \in H^{3}(M)$$
Note that $\overline y$ is not unique, we may change it by
$\overline y \to \overline y + \delta(u)$, $u \in H^{3}(M)$. The corresponding change of the preimages under $\delta$ follows from the equations
$$
\delta^{-1}(x^{2}-d(\overline y+\delta(u)))=\delta^{-1}(x^{2}-d\overline y)+du, $$
$$\delta^{-1}(\frac{p_{1}}{2}(M_{f})-2k(\overline y+\delta(u)))=\delta^{-1}(\frac{p_{1}}{2}(M_{f})-2k\overline y)+2ku
$$
Thus we have a well-defined map
$$\bar v_{x,p} \colon \T(M) \to (H^{3}(M) \times H^{3}(M)) /\Delta_{d,2k},$$
$$f \mapsto [(\delta^{-1}(x^{2}-d\overline y), \delta^{-1}(\frac{p_{1}}{2}(M_{f})-2k\overline y))].$$

If $M$ is non-spin, we just need to replace $\frac{p_1}{2}(M_f)$ in the second equation in (\ref{relat}) by $\frac{p_1}{2}(TM_f \oplus \overline{\eta})$, where $\overline{\eta}$ is the complex line bundle with $c_1(\overline{\eta})=x$. Now since
$$i^*(\frac{p_1}{2}(TM_f \oplus \overline{\eta})-2k\ov y)= \frac{p_1}{2}(TM\oplus i^*\overline{\eta})-2ky=0$$
we have an element $\delta^{-1}(\frac{p_1}{2}(TM_f \oplus \overline{\eta})-2k\ov y) \in H^3(M)$.

\begin{lem}\label{prop:psi}
The map $\bar v_{x,p} \colon \T(M) \to (H^{3}(M) \times H^{3}(M)) /\Delta_{d,2k}$ is a homomorphism.
\end{lem}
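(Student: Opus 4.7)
I would prove the lemma via a cobordism argument using an $M$-bundle over a pair of pants. Let $P$ denote a pair of pants (a $2$-sphere with three open disks removed). Its fundamental group is free on two generators $\alpha,\beta$, and the three oriented boundary circles $c_1,c_2,c_3$ represent $\alpha,\beta,(\alpha\beta)^{-1}$ respectively, with $[c_1]+[c_2]+[c_3]=0$ in $H_1(P)$. Given $f,g \in \T(M)$, let $\pi\colon E \to P$ be the $M$-bundle obtained from $\alpha \mapsto [f]$, $\beta \mapsto [g]$. With appropriate orientations, $\partial E$ is diffeomorphic to $M_f \sqcup M_g \sqcup (-M_{fg})$.

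Since $f$ and $g$ act trivially on $H^*(M)$, the Leray--Serre spectral sequence of $\pi$ collapses. In particular, the class $x$ extends uniquely to $\ov x \in H^2(E)$, and one has a short exact sequence
$$
0 \to H^1(P)\otimes H^3(M) \to H^4(E) \to H^4(M) \to 0.
$$
Pick any lift $\ov y \in H^4(E)$ of $y$. The element $\ov x^2 - d\ov y$ restricts to zero on each fiber, hence lies in the subgroup $H^1(P)\otimes H^3(M)$. In the spin case the same holds for $\tfrac{p_1}{2}(E) - k\ov y$; in the non-spin case one uses $\tfrac{p_1}{2}(TE \oplus \tilde\eta) - k\ov y$, where $\tilde\eta$ is the complex line bundle on $E$ with $c_1(\tilde\eta)=\ov x$ (which exists because $H^2(E) \cong H^2(M)$).

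Restricting along an inclusion $c_i \hookrightarrow P$, the subgroup $H^1(P)\otimes H^3(M)$ maps to $H^1(c_i)\otimes H^3(M) \cong H^3(M)$, and this identifies, via the Wang sequence of the monodromy $f_i$, with $\delta_i H^3(M) \subset H^4(M_{f_i})$. Chasing definitions, the restriction of $\ov x^2 - d\ov y$ to $M_{f_i}$ corresponds under $\delta_i^{-1}$ to the first component of $\bar\tau(f_i)$, computed using the lift of $y$ inherited from $\ov y$. The homological identity $[c_1]+[c_2]+[c_3]=0$ forces the three restrictions to sum to zero in $H^3(M)$; combined with the orientation identification $-M_{fg}$, this yields $u_f + u_g = u_{fg}$, where $u_{(-)}$ denotes the first component of $\bar\tau$. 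The analogous argument applied to the Pontrjagin class gives additivity of the second component, and the two identities combine to $\bar\tau(fg)=\bar\tau(f)+\bar\tau(g)$.

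The main obstacle is careful bookkeeping: verifying that the ambiguity in choosing a global lift $\ov y \in H^4(E)$, which takes values in the rank-two group $H^1(P)\otimes H^3(M)$, is exactly compatible with the independent ambiguities in the original per-fiber definitions of $\bar\tau(f_i)$ so that the identity descends cleanly modulo $\Delta_{d,k}$; and fixing the orientation conventions so that $-M_{fg}$ contributes with the correct sign rather than producing $u_f+u_g+u_{fg}=0$.
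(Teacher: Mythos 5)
Your proof is correct and takes a genuinely different route from the paper's. The paper never leaves dimension $7$: it realizes $M_{fg}$ directly by cutting the mapping torus $M_f$ at the fiber over $1/2$ and regluing via $g$, then uses excision to identify $H^4(M_{fg},M_1\cup M_2)$ with $H^4(M_f,M)\oplus H^4(M_g,M)$ and observes that the further restriction to $H^4(M_{fg},M_1)\cong H^3(M)$ is the sum map. You instead pass to an $8$-dimensional $M$-bundle $E\to P$ over a pair of pants bounding $M_f\sqcup M_g\sqcup(-M_{fg})$, exploit the degeneration of the Leray--Serre spectral sequence, and derive additivity from $[c_1]+[c_2]+[c_3]=0$. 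Your argument is structurally cleaner in that it makes the bordism relation among the three mapping tori explicit (and is consistent in spirit with the cobordism/characteristic-class methods used elsewhere in the paper for the $S$-invariants), but it does require the orientation and sign bookkeeping you flag: one must check that the Leray--Serre identification $F^1H^4(M_{f_i})\cong H^1(c_i)\otimes H^3(M)$ agrees with the Wang map $\delta_i$ precisely when $c_i$ is oriented to have monodromy $f_i$, and that the boundary orientation on $c_3$ gives monodromy $(fg)^{-1}$ so that $\rho_3(\alpha_E)=-u_{fg}$. The paper's cut-and-paste argument sidesteps almost all of this (there is only one mapping torus and one orientation in play). Two small points worth spelling out in your write-up: (i) that $E$ carries a spin structure when $M$ does, so that $\frac{p_1}{2}(E)$ is defined — this follows since $H^2(E;\mathbb Z/2)\cong H^2(M;\mathbb Z/2)$ because $M$ is simply connected and $P$ is an open surface; and (ii) that the descent modulo $\Delta_{d,k}$ requires nothing more than the observation that the restrictions of the global lift $\ov y$ give valid per-fiber lifts, so the on-the-nose identity $u_f+u_g=u_{fg}$ (and likewise for the Pontryagin component) immediately projects to the quotient.
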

\begin{proof}
The exact sequence (\ref{seqwang}) is identified with
$$0 \to H^4(M_f, M) \to H^4(M_f) \to H^4(M) \to 0.$$
We construct $M_{fg}$ from $M_f$ by cutting $I \times M$ along $1/2 \times M$ and reglueing it via $g$ from the left to the right. Denote the two fibres over $1/4$ and $3/4$ by $M_1$ and $M_2$. Consider the restriction map
$$
H^4(M_{fg}, M_1 \cup M_2) \to H^4(M_{fg}, M_1).
$$
We have an isomorphism
$$H^4(M_f, M) \oplus H^4(M_g, M) \stackrel{\cong}{\rightarrow} H^4(M_{fg}, M_1 \cup M_2).$$
If we identify $H^4(M_{fg},M_1)$, $H^4(M_f, M)$ and $H^4(M_g,M)$ with $H^3(M)$, the homomorphism
$$H^4(M_f, M) \oplus H^4(M_g, M) \stackrel{\cong}{\rightarrow} H^4(M_{fg}, M_1 \cup M_2) \to H^4(M_{fg}, M_1)$$
corresponds to the homomorphism $H^3(M) \oplus H^3(M) \to H^3(M)  $ given by the sum, and the corresponding cohomology classes for $M_f$ and $M_g$ are mapped to that of $M_{fg}$. This shows that $\bar v_{x,p}$ is a homomorphism.
\end{proof}

\subsubsection{The image of the variation map}
Next we determine the image of $\bar v_{x,p}$.
Consider the Hurewicz homomorphism $h \colon \pi_3(M) \to H_3(M)$. Let $g \colon M \to \mathbb C \mathrm P^\infty$ be a representative of the classifying map of the chosen generator $x$. We replace $g$ by the fibration $p \colon E_g \to \mathbb C \mathrm P^\infty$, where $E_g = \{(x,\alpha)\,|\, \alpha \colon I \to \mathbb C P^\infty, \alpha(0) = g(x)\}$ with the compact open topology, and $p$ mapping  $(x, \alpha)$ to $\alpha(1)$. The inclusion $i \colon M \to E_g$ mapping $x$ to $(x, \alpha)$ with $\alpha (t) = g(x)$ is a homotopy equivalence (with homotopy inverse mapping $(x,\alpha) $ to $p_1(x)$, the projection to $M$) such that $g$ is homotopic to $p \circ i$. We denote the fibre of $p$ by $F$. Then $\pi_3(M) \cong \pi_3(F) \cong H_3(F)$. The fibration $F \to E_g \to \mathbb C \mathrm P^\infty$ extends to the left to give a fibration $\Omega \mathbb C \mathrm P^\infty \to F \to E_g$, with $\Omega \mathbb C \mathrm P^\infty \simeq S^1$. From the Gysin sequence of this fibration we get an exact sequence
\begin{equation}\label{seq1}
0 \to H_4(M) \stackrel{\cap x}{\longrightarrow} H_2(M) \to H_3(F) \to H_3(M) \to 0,
\end{equation}
which induces a short exact sequence
$$ 0 \to \z/d \to \pi_3(M) \stackrel{h}{\rightarrow} H_3(M) \to 0.$$
This short exact sequence is compatible with the action of $f_*$,  and for $f \in \T(M)$, $f_{*}=\mathrm{id}$ on $H_{3}(M)$ and $\z/d$. The homomorphism $f_*-\mathrm{id} \colon \pi_3(M) \to \pi_3(M)$ gives rise  to a homomorphism $\beta \colon H_{3}(M) \to \z/d$, which we view as an element in $H^{3}(M;\z/d)$.

\begin{lem}\label{lem:pi3}
Under the composition
$$\T(M) \stackrel{\bar v_{x,p}}{\longrightarrow} (H^{3}(M) \times H^{3} (M))/\Delta_{d,2k} \stackrel{\mathrm{pr_{1}}}{\longrightarrow} H^{3}(M)/d = H^{3}(M;\z/d)$$
 the image of $f$ is $\beta$. And this map is surjective.
\end{lem}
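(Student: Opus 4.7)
The plan is to prove the identity $\mathrm{pr}_1 \circ \bar\tau = \beta$ cohomologically via the mapping torus, and then to show surjectivity by realizing a basis of $H^3(M;\z/d)$ through explicit generalized Dehn twists.

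For the identity, I will exploit the classifying map $c \colon M_f \to \cp^\infty$ extending $x$ (which exists because $f \in \T(M)$ fixes $x \in H^2(M)$), together with the associated principal $S^1$-bundle $\pi \colon \tilde M_f \to M_f$. Its restriction to a fibre is the total space $F$ of the circle bundle over $M$ classified by $x$, so $\tilde M_f$ is the mapping torus of a lift $\tilde f \colon F \to F$, and the induced action on $\pi_3(F) = \pi_3(M)$ is precisely $f_*$. Since $\pi^*x = 0$ in $H^2(\tilde M_f)$, pulling back the defining identity $x^2 - d\ov y = \delta(\alpha)$ yields
\[
  -\,d\,\pi^*\ov y \;=\; \delta_{\tilde M_f}(\pi^*\alpha) \qquad \text{in } H^4(\tilde M_f).
\]
From the Gysin sequence of $F \to M$, $\pi^*y \in H^4(F)$ generates the torsion summand $\z/d$, which via universal coefficients is canonically dual to $\ker(h) = \z/d \subset H_3(F)$. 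Translating the displayed equation through the Wang sequence of $\tilde M_f \to S^1$, the left-hand side, when paired with a lift $\tilde\xi \in \pi_3(F)$ of any $\xi \in H_3(M)$, computes $\tilde f_*(\tilde\xi) - \tilde\xi \in \ker(h)$, which is $\beta(\xi)$ by definition; the right-hand side gives $\alpha(\xi) \bmod d$. Hence $\alpha \equiv \beta \pmod d$.

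For surjectivity, I use the generalized Dehn twists $\varphi_i, \varphi^i$ introduced in \S\ref{subsec:genrel}. These are Dehn twists along embedded $S^3 \times D^3 \subset M$ whose core represents $(S^3 \times 0)_i + S^3_0 \in \pi_3(M)$ (respectively $(S^3 \times 0)^i + S^3_0$), with twist parameter $\alpha_e \in \pi_3(SO(4))$. A direct calculation from the Dehn twist formula shows that $\varphi_i$ acts trivially on $H_3(M)$ but shifts a chosen lift of the $i$-th symplectic basis vector by the generator of $\ker(h)$; hence $\beta(\varphi_i)$ is the $i$-th standard dual basis vector in $H^3(M;\z/d) = (\z/d)^{2g}$, and similarly $\beta(\varphi^i)$ realizes the $(g+i)$-th. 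Together they generate $H^3(M;\z/d)$.

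The main obstacle lies in the first part: rigorously matching the algebraic Wang-coboundary preimage $\delta^{-1}$ modulo $d$ with the geometric monodromy defect $\tilde f_* - \mathrm{id}$ on $\pi_3(F)$. This hinges on careful bookkeeping of the universal-coefficient duality $\mathrm{tors}\,H^4(F) \cong \mathrm{Hom}(\mathrm{tors}\,H_3(F),\mathbb Q/\z)$ and of the naturality of Gysin and Wang sequences under $\pi$; once this identification is set up, the computation is essentially formal.
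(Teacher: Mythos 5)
Your geometric setup for the first half is correct and matches the paper's: the $S^1$-bundle $\tilde M_f \to M_f$ classified by $x$ is precisely the paper's $F_f$, it is the mapping torus of a lift $\tilde f \colon F \to F$, and pulling back $\delta(\alpha)=x^2-d\ov y$ kills $x^2$. But the step you call ``essentially formal'' is in fact the whole content of the lemma, and as written it does not type-check: $-d\pi^*\ov y$ lives in $H^4(\tilde M_f)$, while a lift $\tilde\xi$ of $\xi\in H_3(M)$ lives in $H_3(F)$, so there is no direct Kronecker pairing giving $\tilde f_*\tilde\xi-\tilde\xi$. Moreover, if $\beta(\xi)\neq 0$ then $\tilde\xi$ is not in the image of $\partial\colon H_4(\tilde M_f)\to H_3(F)$ (that image is $\ker(\tilde f_*-\mathrm{id})$), so one cannot even manufacture a $4$-cycle $c$ with $\partial c=\tilde\xi$ to pair with. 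What is actually needed is the diagram chase the paper carries out, interlocking the \emph{homological} Gysin sequence $0\to H_4(M)\xrightarrow{\cap x}H_2(M)\xrightarrow{j_*}H_3(F)\to H_3(M)\to 0$ (which identifies $\ker h\cong H_2(M)/dH_2(M)$ concretely) with the Wang sequences: for $a\in H_3(M)$ one lifts to $\bar a\in H_3(F)$, writes $(f_*-\mathrm{id})(\bar a)=j_*(b)$ with $b\in H_2(M)$, lifts $b$ to $c\in H_4(M_f)$ with $x\cap c=b$ and $\partial c=a$, and then $\langle\alpha,a\rangle=\langle\delta\alpha,c\rangle=\langle x^2-d\ov y,c\rangle\equiv\langle x,b\rangle\pmod d$, which is $\beta(a)$ by construction. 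The ``careful bookkeeping'' you defer is this chase; it is not automatic from the displayed identity.

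Your surjectivity argument has a concrete error: $\varphi_i$ does \emph{not} lie in $\T(M)$. Its core sphere represents $(S^3\times 0)_i + S^3_0$, whose image in $H_3(M)$ is the non-torsion symplectic basis vector $[(S^3\times 0)_i]$ (since $S^3_0$ is null-homologous), and since $\alpha_e$ has Euler number $1$, the induced map on $H_3(M)$ is the nontrivial transvection $\xi\mapsto\xi+(\xi\cdot[(S^3\times 0)_i])[(S^3\times 0)_i]$, by the paper's Lemma~\ref{lem:dehn1}. To obtain an element of the Torelli group you must cancel this transvection by precomposing with $(f_i^e)^{-1}$, the inverse Dehn twist along a representative of $(S^3\times 0)_i$ alone; the composite $(f_i^e)^{-1}\circ\varphi_i$ fixes $H_3(M)$ and shifts the dual basis element in $\pi_3(M)$ by $S^3_0$, which is exactly the construction the paper uses (with $f$ and $g$ the two Dehn twists). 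Without this correction the elements you exhibit do not even lie in the domain of $\bar\tau$.
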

\begin{proof}
We consider the fibration $p \colon E_g \to \mathbb C \mathrm P^\infty$ replacing the classifying map $g$ of our chosen generator $x$ constructed above. Since our diffeomorphism $f \colon M\to M$ acts trivially on $H^2(M)$ the composition $gfp_1: E_g  \to \mathbb C \mathrm P^\infty$ is homotopic to $p$, where $p_1$ is the projection to $M$. We choose a homotopy and lift it to a map $\hat f: E_g \to E_g$ commuting with $p$. This is a homotopy equivalence and we consider its mapping torus $(E_g)_{\hat f}$. This is on the one hand a fibration over $S^1$ with monodromy $\hat f$ and on the other hand a fibration over $\mathbb C \mathrm P^\infty$ with fibre the mapping torus of the fibre $F$ of $p$ equipped with the restriction of $\hat f$.

Thus we have a commutative diagram of maps
$$\xymatrix{
F \ar[r] \ar[d] & E_g \ar[r] \ar[d] & \cp^{\infty} \ar[d]^= \\
F_{\hat f }\ar[r] \ar[d] & (E_g)_{\hat f} \ar[r] \ar[d] & \cp^{\infty} \\
S^1 \ar[r]^= & S^1 & \\}$$
where the two horizontal lines and the two vertical lines are fibrations. Consider the Gysin sequences of the horizontal fibrations and the Wang sequences of the vertical fibrations. We use the canonical homotopy equivalences to identify $H_*(M)$ with $H_*(E_g)$, and $H_*(M_f)$ with $H_*((E_g)_{\hat f})$. Then we have the  following commutative diagram, where the rows and columns are exact sequences
$$\xymatrix{
H_{4}(M) \ar[r] \ar[d] & H_{2}(M) \ar[r] \ar[d] & H_{3}(F) \ar[d]^{f_{*}-\mathrm{id}} \ar[r]^{j_*} & H_3(M) \ar[d]  \\
H_{4}(M) \ar[r] \ar[d] & H_{2}(M) \ar[r] \ar[d]^{\cong} & H_{3}(F) \ar[d] \ar[r]^{j_*} & H_3(M) \ar[d] \\
H_{4}(M_{f}) \ar[r]^{x \cap -} \ar[d]^{\partial} & H_{2}(M_{f}) \ar[r] & H_{3}(F_{f}) \ar[r] & H_3(M_f) \\
H_3(M) & & & }$$
The composition $H_4(M_f) \to H_2(M_f) \cong H_2(M) 
\to H_3(F)$ of maps in the above diagram  induces a map $\gamma \colon H_3(M) \to H_3(F)$, such that $\gamma \circ j_*=f_*-\mathrm{id} \colon H_3(F) \to H_3(F)$.

Let $\alpha \in H^3(M)$ be the preimage of $x^2 -d\ov y \in H^4(M_f)$ under $\delta$ (see the sequence (\ref{seqwang})). For any $a \in H_3(M)$, let $\overline a \in H_3(F)$ be a preimage of $a$.  Since $(f_*-\mathrm{id})(a)=0$, there is an element $\overline b \in H_2(M)$ which is the preimage of $(f_*-\mathrm{id})(\overline a)$ under the map $H_2(M) \to H_3(F)$. Let $ b \in H_2(M_f)$ be the image of $\overline b$ under the isomorphism $H_2(M) \to H_2(M_f)$, then there is an element $c \in H_4(M_f)$ whose image under the map $H_4(M_f) \to H_2(M_f)$ is $b$. From the construction of $\gamma$ we have  $\partial c =a$. Now 
$$ \langle \alpha , a \rangle = \langle \alpha , \partial c \rangle = \langle \delta \alpha, c \rangle = \langle x^2 -d\ov y , c\rangle  \equiv \langle x^2, c \rangle = \langle x ,x \cap c \rangle = \langle x, b \rangle \pmod d.$$
This shows that the homomorphism $\beta \colon H_3(M) \to \z/d$ coincides with the homomorphism induced by the cohomology class $\delta^{-1}(x^2 -d\ov y) \in H^{3}(M;\z/d)$.

Next we show the surjectivity. Recall that there is a decomposition $M = N \# \, r(S^{3} \times S^{3})$, where $r=b_3(M)/2$. We have $\pi_3(M) = \pi_3(N) \oplus \pi_3(\# \, r(S^3 \times S^3)) \cong \z/d \oplus \z^{2r}$. Let $e_0$ be a generator of $\pi_3(N) \cong \z/d$, let $e_1, f_1, \cdots, e_r, f_r$ be the standard basis of $\pi_3(\# \, r(S^3 \times S^3))$, whose images under the Hurewicz homomorphism form a symplectic basis of $H_3(M)$. Let $S_1, S_2 \colon S^3 \times D^3 \subset M$ be embeddings whose cores represent $e_i$ and $e_0 + e_i$, respectively. Choose $\alpha \in \pi_3SO(4)$ such that $\langle e(\alpha), [S^{4}] \rangle =1$,  where $e(\alpha)$ is the Euler class of the vector bundle over $S^4$ with clutching function $\alpha$. Consider the Dehn twists $\mathbf{t}_{S_2, \alpha}$ and $\mathbf{t}_{S_1, -\alpha}$. Then by Lemma \ref{lem:PL} 
$$\mathbf{t}_{S_2, \alpha*}(f_{i})=e_{0}+e_{i}+f_{i}, \ \ \mathbf{t}_{S_1,-\alpha*}(f_{i})=f_{i}-e_i{}.$$ 
The other generators of $\pi_{3}(M)$ are fixed. Then the homomorphism $(\mathbf{t}_{S_1,-\alpha} \circ \mathbf{t}_{S_2, \alpha})_{*}-\mathrm{id} \colon H_3(M) \to H_3(M)$ maps $f_{i}$ to $e_{0}$, and other generators of $H_{3}(M)$ to $0$. This finishes the proof of Lemma \ref{lem:pi3}.
\end{proof}


Next we consider the composition
$$\T(\# \, r(S^{3} \times S^{3}))\to \T(M) \stackrel{\bar v_{x,p}}{\longrightarrow} (H \times H )/\Delta_{{d,2k}}.$$
\begin{lem}\label{lem:chi}
There is a commutative diagram
$$\xymatrix{
\T(\# \, r (S^3 \times S^3)) \ar[r]^{\ \ \ \ \chi} \ar[d] & H \ar[d]^{2 \cdot i_2} \\
\T(M) \ar[r]^{\bar v_{x,p} \ \ \  \ \ \ } & (H \times H)/\Delta_{d,2k}}$$
where $i_2 \colon (H \times H)/\Delta_{d,2k}$, $\alpha \mapsto [0,\alpha]$ is the inclusion of the second factor composed with the projection map.\end{lem}
\begin{proof}
Recall that we fix a decomposition $M=N \# \, r(S^3 \times S^3)$  (\ref{decomp}) where $r=b_3(M)/2$.  Let $f \colon M \to M$ be a diffeomorphism which is the identity on $N-\mathring D$, then the mapping torus is 
$$M_{f}=(N \times S^{1}) \#_{S^{1}} (\# \, r(S^3 \times S^3))_{f},$$ 
where $\#_{S^1}$ denotes the fiber connected-sum along $S^1$. Under the decomposition
$$H^{4}(M_{f}) =H^{4}(N) \oplus H^{4}((\# \, r(S^3 \times S^3))_{f})$$
we may choose $\overline y = (y,0)$, where $y=x^2/d \in H^4(N)$ is the canonical generator. Therefore in $H^4(M_f)$ we have the equation
$$x^{2}-d\overline y=0.$$
For the Pontrjagin class, in the spin case we have $$\frac{p_{1}}{2}(M_{f})=(\frac{p_{1}}{2}(N), \frac{p_{1}}{2}((\# \, r(S^3 \times S^3))_{f})),$$
therefore
$$\frac{p_{1}}{2}(M_{f})-2k\overline y=\frac{p_{1}}{2}((\# \, r(S^3 \times S^3))_{f}).$$
In the non-spin case we have 
$$\frac{p_{1}}{2}(TM_{f}\oplus \overline{\eta})-2k \ov y= \frac{p_{1}}{2}((\# \, r(S^3 \times S^3))_{f}). $$  

The coboundary map $\delta \colon H^3(M) \to H^{4}(M_f)$
in the Wang sequence (\ref{seqwang}) can be identified with 
$$H^3(M) \stackrel{\Sigma}{\longrightarrow} H^{4}(\Sigma(M^+)) \stackrel{c^*}{\longrightarrow} H^{4}(M_f).$$
We conclude the proof by identifying $H^3(\# \, r(S^3 \times S^3))$ with $H^3(M)$ and comparing the homomorphism $\bar v_{x,p}$ with the invariant $\chi$ defined in \S \ref{sec:review}.
\end{proof}

Moreover, we have the following

\begin{lem}\label{lem:image2}
$\mathrm{Im}(\bar v_{x,p}) \cap i_2(H) = (0 \times 2H)/\Delta_{d,2k}$
\end{lem}
\begin{proof}
That the first component of $\bar v_{x,p}(f)$ equals $0$ is equivalent to that $f$ induces the identity on $\pi_3(M)$ (Lemma \ref{lem:pi3}). 

We first assume $M$ is spin. Recall the second component of $\bar v_{x,p}(f)$ is $\delta^{-1}(\frac{p_1}{2}(M_f)-2k \bar y)$. It suffices to show that $p_1(M_f)$ is divisible by $4$. This is done by showing that the evaluation of $p_1(M_f)$ on a basis of $H_4(M_f)$ is divisible by $4$. Choose embeddings $S_i^3 \subset M$ representing a basis of $H_3(M)$. By Haefliger's theorem \cite[Theorem 1b]{Hae61}, $S_i^3$ and $f(S_i^3)$ are isotopic in $M \times [0,1]$. We choose an isotopy and obtain a smooth map $g_i \colon S^3_i \times S^1 \to M_f$, which is an embedding on each $S^3_i \times \{t\}$.  Then from the exact sequence 
$$0 \to H_4(M) \to H_4(M_f) \to H_3(M) \to 0$$
a basis of $H_4(M_f)$ is given by a generator of $H_4(M)$ and $g_{i*}[S^3_i \times S^1]$. The evaluation of $p_1(M_f)$ on the generator of $H_4(M)$ equals $4k$. Next we consider the evaluation $\langle p_1(M_f), g_{i*}[S^3_i \times S^1] \rangle$. We drop the subscript $i$ in the sequel. 

We form the map $g' \colon S^3 \times S^1 \to  M_f \times S^1$, $(x,t) \mapsto (g(x,t), t)$, which is an embedding of a $4$-manifold into an $8$-manifold. Then 
$$\langle p_1(M_f ), g_*[S^3 \times S^1] \rangle = \langle p_1(M_f \times S^1), g'_*[S^3 \times S^1] \rangle =\langle p_1(\nu), [S^3 \times S^1] \rangle$$
where $\nu$ is the normal bundle of $g'(S^3 \times S^1)$ in $M_f \times S^1$. Note that $g'$ is homotopic to $g'' \colon S^3 \times S^1 \to M_f \times S^1$ with $g''(x,t)=(g(x,t), -t)$, and the intersection of $g'$ and $g''$ is empty. This shows that the Euler class of $\nu$ is trivial, hence $\nu$ is reduced to a $3$-dimensional vector bundle. For such vector bundles, the first Pontrjagin class is divisible by $4$.

In the non-spin case we need to replace $p_1(M_f)$ by $p_1(TM_f \oplus \bar\eta)$. But since any complex line bundle over $S^3 \times S^1$ is trivial, the above computation still works. 

\end{proof}

By Lemma \ref{lem:pi3}, \ref{lem:chi} and \ref{lem:image2}, we may conclude that $\mathrm{Im} \bar v_{x,p} = (H \times 2H)/\Delta_{d,2k}$. Now we may remove the factor $2$ by modifying the definition of the variation map. Namely,  in the spin case we define
$$v_{x,p}(f)=(\delta^{-1}(x^2-d\ov y), \delta^{-1}(\frac{1}{4}(p_1(M_f)-4k\ov y))) \in (H \times H)/\Delta_{d,k};$$
in the non-spin case 
$$v_{x,p}(f)=(\delta^{-1}(x^2-d\ov y), \delta^{-1}(\frac{1}{4}(p_1(TM_f \oplus \overline \eta )-4k\ov y))) \in (H \times H)/\Delta_{d,k}.$$
Then we have the following
\begin{prop}\label{prop:tau}
$v_{x,p} \colon \T(M)\to (H \times H) /\Delta_{d,k}$ is a surjective homomorphism.
\end{prop}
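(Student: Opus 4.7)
The plan is to establish three properties of the map $\tau$: well-definedness, the homomorphism property, and surjectivity. The homomorphism property is inherited from $\bar\tau$ via Lemma \ref{prop:psi} once well-definedness is in place, and surjectivity will follow by combining Lemma \ref{lem:pi3} with Theorem \ref{thm:kr} through Lemma \ref{lem:chi}, reinterpreted under the new normalisation.

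For well-definedness, the critical claim is that the class $p_1(M_f)-2k\ov y$ lies in $4\,\delta(H^3(M))$, so that the factor $1/4$ is meaningful. When $f\in\T(M)$ acts trivially on $\pi_3(M)$, this is precisely what the paragraph preceding the proposition establishes: Haefliger's theorem produces embeddings $S^3\times S^1\subset M_f$ realising a basis of the $H_3(M)$-summand of $H_4(M_f)$ coming from the Wang sequence, and one then checks that $\langle p_1(M_f),[M_4]\rangle=2k$ is divisible by $4$ because $k$ is even in both the spin and non-spin cases, while $\langle p_1(M_f),[S^3\times S^1]\rangle=\langle p_1(E),[S^4]\rangle$ for a rank-$4$ bundle $E$ over $S^4$ whose clutching factors through $\pi_3(SO(3))$, whose first Pontrjagin class is divisible by $4$; choosing $\ov y$ to annihilate the $[S^3\times S^1]$ pairings closes the argument. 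To extend divisibility to a general $f\in\T(M)$, I would factor $f=g\cdot h$ where $h$ acts trivially on $\pi_3(M)$ and $g$ is a product of the explicit Dehn twists from the proof of Lemma \ref{lem:pi3}; divisibility for $h$ is as just shown, divisibility for the generating Dehn twists can be verified by direct inspection of their mapping tori, and additivity in the mapping-torus construction of Lemma \ref{prop:psi} then yields divisibility for $f$.

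The remaining ambiguity $\ov y\mapsto\ov y+\delta u$, $u\in H^3(M)$, modifies the pair by $(-du,-(k/2)u)\in\Delta_{d,k/2}$; this uses that $k$ is even, since $k-2d=12l$ in the spin case and $k-d=24l$ with $d$ even in the non-spin case force $k/2\in\z$. Hence $\tau$ is a well-defined map to $(H\times H)/\Delta_{d,k/2}$, and the homomorphism property follows from Lemma \ref{prop:psi} applied to $\bar\tau$ together with the observation that halving the second coordinate is a homomorphism of abelian groups.

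For surjectivity, combine two partial images. By Lemma \ref{lem:pi3}, the composition of $\tau$ with the projection onto the first $H^3(M)/d$-factor is surjective, so every class $[(\beta,\ast)]$ with $\beta\in H^3(M)$ is hit. Under the new normalisation, Lemma \ref{lem:chi} becomes the statement that the composition $S\m(\sharp g(S^3\times S^3))\to\T(M)\xrightarrow{\tau}(H\times H)/\Delta_{d,k/2}$ sends $f$ to $[(0,\chi(f))]$ (the factor $2$ of Lemma \ref{lem:chi} becomes $1$ after halving), and $\chi$ is surjective by Theorem \ref{thm:kr}, so every $[(0,\alpha)]$ with $\alpha\in H^3(M)$ is hit as well. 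Together these two families generate all of $(H\times H)/\Delta_{d,k/2}$. The main obstacle is the divisibility-by-$4$ verification in the generic case, since the Haefliger argument is clean only when $f$ acts trivially on $\pi_3(M)$; one must propagate the conclusion to arbitrary $f\in\T(M)$ via the explicit Dehn-twist generators appearing in Lemma \ref{lem:pi3}.
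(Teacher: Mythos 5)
Your proposal follows the paper's own proof very closely: the homomorphism property is imported from Lemma~\ref{prop:psi} applied to $\bar\tau$, and surjectivity is obtained exactly as in the paper, combining Lemma~\ref{lem:pi3} (surjectivity onto $H/d$ of the first coordinate) with Lemma~\ref{lem:chi} and the surjectivity of $\chi$ from Theorem~\ref{thm:kr} (filling $i_2(H)$ via the second coordinate after halving). That part is correct.

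For well-definedness you have isolated the genuine subtlety: the Haefliger argument, producing embedded $S^3\times S^1\subset M_f$ so that $\langle p_1(M_f),-\rangle$ is divisible by $4$ on a basis of $H_4(M_f)$, directly applies only when $f$ acts trivially on $\pi_3(M)$, i.e.\ when $\mathrm{pr}_1(\bar\tau(f))=0$. The paper's passage from $\mathrm{Im}\,\bar\tau\cap i_2(H)\subset i_2(2H)$ together with Lemmas~\ref{lem:chi} and~\ref{lem:pi3} to the equality $\mathrm{Im}\,\bar\tau=(H\times 2H)/\Delta_{d,k}$ is quick; one in fact also needs that over each class in $H/d$ there is a $\bar\tau$-value with even second coordinate, which is exactly what your Dehn-twist decomposition is meant to supply. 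Your plan --- factor $f=g\cdot h$ with $h$ trivial on $\pi_3(M)$, $g$ a product of the Dehn twists from the proof of Lemma~\ref{lem:pi3}, use additivity from Lemma~\ref{prop:psi}, and verify divisibility for the Dehn twists from their coboundaries $M\times D^2\cup E_\alpha$ of Lemma~\ref{lem:dehn2} --- is sound, but the Dehn-twist verification is asserted rather than carried out, so the proposal as written leaves this step open.

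A quicker route makes the case distinction unnecessary. For any closed oriented $7$-manifold $X$, the Wu classes satisfy $v_i(X)=0$ for $i\ge 4$ and, since $w_1(X)=0$, also $v_3(X)=w_3(X)+\mathrm{Sq}^1 w_2(X)=0$; hence $w_4(X)=\mathrm{Sq}^2 v_2(X)=w_2(X)^2$. When $M$ is spin the Wang sequence gives $w_2(M_f)=0$, so $w_4(M_f)=0$; when $M$ is non-spin and $\eta$ is the line bundle making $TM_f\oplus\eta$ spin, $w_4(TM_f\oplus\eta)=w_4(TM_f)+w_2(TM_f)w_2(\eta)=w_2(TM_f)^2+w_2(TM_f)^2=0$. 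Since the mod $2$ reduction of the spin characteristic class $\frac{p_1}{2}$ is $w_4$, the class $\frac{p_1}{2}(M_f)$ (resp.\ $\frac{p_1}{2}(TM_f\oplus\eta)$) is always even. This gives the divisibility by $4$ for every $f\in\T(M)$ at once, so $\tau$ is well-defined without appealing to Haefliger's theorem or to Dehn-twist generators, and the remaining surjectivity argument then goes through as you state.
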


This gives the proof of the exact sequence (\ref{seq:2}) in Theorem \ref{thm:main1}.

\smallskip

Theorem \ref{thm:c} is a consequence of the following stronger statement.

\begin{thm}\label{thm:quotient}
Taking the quotient by $\K(M)$ we have a short exact sequence
\begin{equation}\label{seqthm}
0 \to (H \times H )/\Delta_{d,k} \to \m(M)/\K(M) \to \Sp(b_3(M),\z) \to 1
\end{equation}
 in which as a $\Sp(b_3(M),\z)$-module $ (H  \times H )/\Delta_{d,k}$ fits into an exact sequence of $\Sp(b_3(M),\z)$-modules
$$0 \to H  \stackrel{i_{2}}{\longrightarrow}  (H  \times H )/\Delta_{d,k} \stackrel{pr_{1}}{\longrightarrow} H \otimes \z/d \to 0$$
where the $\Sp(b_3(M),\z)$-actions on $H$ and $H \otimes \z/d$ are the standard linear actions.

When $b_3(M)=2$, or when $b_3(M) \ge 4$ and $M$ is nonspin with $d/2$ odd, the exact sequence (\ref{seqthm}) splits, therefore
$$\m(M)/\K(M) \cong ((H \times H)/\Delta_{d,k} )\rtimes \Sp(b_3(M),\z).$$

In the other cases the exact sequence (\ref{seqthm}) does not split.
Fix a stable framing of $\# \, r(S^3 \times S^3)$, let $\psi \colon H \to \z/2$ be a quadratic refinement of the standard symplectic form on $H$, and let
$$s \colon \Sp(b_3(M),\z) \to H\otimes \z/2, \ \ A \mapsto \psi \cdot A -\psi$$ 
be the $1$-cocycle associated with $\psi$. Then 
there is an injective homomorphism 
$$\m(M)/\K(M) \to ((H \times H)/\Delta_{d,2k} )\rtimes \Sp(b_3(M),\z)$$
whose image is the subgroup
$$\{([a, b],A) \ | \ b \equiv s(A) \pmod 2 \} \subset (H \times H)/\Delta_{d,2k}\rtimes \Sp(b_3(M), \z).$$
\end{thm}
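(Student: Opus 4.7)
The plan is to derive the theorem by quotienting the two exact sequences of Theorem \ref{thm:main1} and identifying the resulting extension class via the commutative square
$$
\xymatrix{
\m(\sharp g(S^3\times S^3)) \ar[r]\ar[d] & \Sp(2g,\z)\ar[d]^{=} \\
\m(M) \ar[r] & \Sp(2g,\z)
}
$$
that appears in the proof of exact sequence \ref{seq:1}. The remark immediately preceding the statement shows $\K(M)\subset Z(\m(M))$, so $\K(M)$ is normal in $\m(M)$ and the quotient fits into the exact sequence \ref{seqthm} with kernel $\T(M)/\K(M)\cong (H\times H)/\Delta_{d,k/2}$. The short exact sequence $0\to H\xrightarrow{i_2}(H\times H)/\Delta_{d,k/2}\xrightarrow{\mathrm{pr}_1}H/d\to 0$ is a direct algebraic verification, and $\Sp(2g,\z)$-equivariance follows from the naturality of the two ingredients in $\tau$: conjugation by $f\in\m(M)$ lifting $A\in\Sp(2g,\z)$ transports both $\delta^{-1}(x^2-d\bar y)$ and $\delta^{-1}(\tfrac14(p_1(M_f)-2k\bar y))$ by $A$, yielding the tautological $\Sp$-actions on $H$ and on $H/d$.

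For the splitting of \ref{seqthm}, I would push forward the Crowley extension class of Remark \ref{rem:ext} along the map $2i_2:H\to (H\times H)/\Delta_{d,k/2}$ provided by Lemma \ref{lem:chi}. By \cite{Crowley09} this class vanishes for $g=1$; combined with a hand-made section of the $H/d$-factor using the generating Dehn twists from Theorem \ref{thm:gen}, this produces a splitting when $g=1$. For $g\ge 2$ non-spin with $d/2$ odd, the integer $k/2=d/2+12l$ is odd, which forces the obstruction to vanish in $(H\times H)/\Delta_{d,k/2}$ because multiplication by $2$ becomes invertible modulo the relation $(du,(k/2)u)$; an explicit splitting is then built by lifting each standard symplectic generator of $\Sp(2g,\z)$ to a diffeomorphism with $\tau=0$. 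In all other cases the push-forward class is nontrivial, which I would verify on a test $2$-cocycle built from commutators of hyperbolic generators of $\Sp(2g,\z)$, so the extension does not split.

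For the injective homomorphism into $(H\times H)/\Delta_{d,k}\rtimes\Sp(2g,\z)$, the formula $(a,b)\mapsto(a,2b)$ descends to an injection $(H\times H)/\Delta_{d,k/2}\hookrightarrow(H\times H)/\Delta_{d,k}$ with image $\{[a,b]:b\in 2H\}$ of index $2^{2g}$, since $k$ is even. I would extend this to $\m(M)/\K(M)$ by choosing a set-theoretic section $\sigma:\Sp(2g,\z)\to\m(M)/\K(M)$, writing each class as $\phi_0\cdot\sigma(A)$ with $\phi_0\in(H\times H)/\Delta_{d,k/2}$, and defining $\hat\tau([\phi])=(2\phi_0+\lambda(A),A)$, where $\lambda(A)\in(H\times H)/\Delta_{d,k}$ is chosen so that the formula is independent of $\sigma$; the image will then satisfy $b\equiv s(A)\pmod 2$ for a $1$-cocycle $s:\Sp(2g,\z)\to H/2$. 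The main obstacle is identifying $s$ with $\psi\cdot A-\psi$: this requires interpreting the second coordinate of $\tau$ as a Spin-characteristic class of the mapping torus and invoking the classical fact (Johnson) that the change of such a class under the symplectic action on $H_3(M;\z/2)$ is precisely the quadratic-refinement cocycle, with careful tracking of the factor of $2$ relating $\Delta_{d,k/2}$ and $\Delta_{d,k}$.
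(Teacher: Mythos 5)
Your high-level plan — push forward the Crowley extension class of Remark \ref{rem:ext} along the inclusion $H \hookrightarrow (H\times H)/\Delta_{d,k/2}$ to decide whether the lower extension splits, and then build the injection into $(H\times H)/\Delta_{d,k}\rtimes\Sp(2g,\z)$ via the doubling map $b\mapsto 2b$ — matches the paper's strategy. But two of the central steps are not correctly carried out.

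First, the splitting criterion. Your claim that for nonspin $M$ with $d/2$ odd ``multiplication by $2$ becomes invertible modulo the relation $(du,(k/2)u)$'' is false: $H$ is free of rank $2g$, so $\Delta_{d,k/2}$ is a free sublattice and no torsion is introduced; multiplication by $2$ on $(H\times H)/\Delta_{d,k/2}$ remains far from invertible. The actual mechanism is a diagram chase in group cohomology. Writing $i_1\colon H\to (H\times H)/\Delta_{d,k/2}$, $u\mapsto [u,0]$, and $i_2\colon H\to (H\times H)/\Delta_{d,k/2}$, $v\mapsto[0,v]$, one compares the two short exact sequences $0\to H\xrightarrow{\cdot d}H\to H/d\to 0$ and $0\to H\xrightarrow{i_2}(H\times H)/\Delta_{d,k/2}\to H/d\to 0$ via the vertical map $\cdot(k/2)\colon H\to H$; this forces the identity $\delta_{\mathrm{bot}}=(\cdot k/2)_*\circ\delta_{\mathrm{top}}$ between connecting maps. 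Because $H^*(\Sp(2g,\z);H)$ is annihilated by $2$, $(\cdot m)_*$ is either zero or the identity according to the parity of $m$. In the nonspin case $d$ is even, so $(\cdot d)_*=0$ and $\delta_{\mathrm{top}}$ is onto $H^2(\Sp;H)$; when $k/2$ is odd, $(\cdot k/2)_*=\mathrm{id}$, hence $\delta_{\mathrm{bot}}$ is onto and $i_{2*}=0$, killing the obstruction. In all other cases ($M$ spin, or nonspin with $k/2$ even) $\delta_{\mathrm{bot}}=0$, so $i_{2*}$ is injective, and Crowley's theorem that $\alpha\neq 0$ for $g\ge 2$ then gives non-splitting. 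Your alternative — verifying non-triviality ``on a test $2$-cocycle built from commutators of hyperbolic generators'' — is not a proof and cannot replace this.

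Second, the identification of the cocycle $s$. You relegate the equality $s(A)=\psi\cdot A-\psi$ to ``the classical fact (Johnson),'' but nothing like Johnson's surface-theoretic result is available here; in the paper this is obtained from \cite[Proposition 2.3]{Crowley09}, which explicitly exhibits the image of $S\m(\sharp g(S^3\times S^3))/\Theta_7$ in $H\rtimes\Sp(2g,\z)$ as $\{(b,A)\mid b\equiv s(A)\bmod 2\}$. You would also need to be careful with the factor of $2$: Lemma \ref{lem:chi} gives $2\cdot i_2$ landing in $(H\times H)/\Delta_{d,k}$, and it is the renormalized $\tau$ (with $\frac{1}{4}p_1$ instead of $\frac{1}{2}p_1$) that gives $i_2$ into $(H\times H)/\Delta_{d,k/2}$; your writeup conflates these two. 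Finally, your plan to produce an explicit splitting ``by lifting each standard symplectic generator to a diffeomorphism with $\tau=0$'' only gives a set map on generators; checking that the $\Sp(2g,\z)$-relations are respected is exactly the content of the cohomology class being zero, so this does not yield an independent construction.
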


\begin{rem}
Actually the injective homomorphism and the quadratic refinement $\psi$ are both constructed from a stable framing of $\# \, r(S^3 \times S^3)$. 
\end{rem}

\begin{proof}
The exact sequence (\ref{seqthm}) follows from the exact sequences in Theorem \ref{thm:main1}. We examine the action of $\Sp(b,\z)$ on $(H \times H )/\Delta_{d,k}$.
There is a commutative diagram
\begin{equation}\label{diag:module}
\xymatrix{
0 \ar[r] & H  \ar[r] \ar[d]^{i_{2}} & \m(\#_r(S^{3} \times S^{3}))/\Theta_{7} \ar[d] \ar[r] & \Sp(b,\z) \ar[d]^{=} \ar[r] & 1\\
0 \ar[r] & (H  \times H ) / \Delta_{d,k} \ar[r] & \m(M)/\K(M) \ar[r] & \Sp(b,\z) \ar[r] & 1}
\end{equation}
in which the inclusion $i_{2} \colon H  \to (H \times H )/\Delta_{d,k}$ is a $\Sp(b,\z)$-module homomorphism, and the action of $\Sp(b,\z)$ on $H$ is the standard linear action \cite{Kr78}. In the exact sequence
$$ 0 \to  H \stackrel{i_{2}}{\longrightarrow} (H \times H )/\Delta_{d,k} \stackrel{\mathrm{pr_{1}}}{\longrightarrow} H \otimes \z/d \to 0$$
the $\Sp(b,\z)$-module structure on $H \otimes \z/d$ can be obtained as follows: for any $f \in \T(M)$, $\varphi \in \m(M)$, we consider $(\varphi ^{-1} f \varphi)_{*}-\mathrm{id}$ as a homomorphism from $H$ to $\z/d$. For any $a \in H$, let $\bar a \in \pi_{3}(M)$ be a pre-image of $a$, then
$$(\varphi^{-1}f \varphi)_{*}(\bar a) - \bar a=\varphi^{-1}_{*}(f_{*}(\varphi_{*}(\bar a))-\varphi_{*}(\bar a))=(f_{*}-\mathrm{id})(\varphi_{*}(\bar a))$$
where the second equality follows from the facts that $f_{*}(\varphi_{*}(\bar a))-\varphi_{*}(\bar a) \in \z/d$ and $\varphi^{-1}_{*}$ is trivial on $\z/d$. This shows that the induced action of $\Sp(b,\z)$ on $H\otimes \z/d$ is the standard linear action.

Let $\alpha \in H^{2}(\Sp(b,\z), H)$ be the cohomology class of the extension  of the upper exact sequence in diagram (\ref{diag:module}), then it is a standard fact that $i_{2*}(\alpha) \in H^{2}(\Sp(b,\z),  (H \times H ) / \Delta_{d,k} )$ is the cohomology class of the extension  of the lower exact sequence (see \cite[p.94 Ex.~1(b)]{Brown}).

When $b=2$ we have $\alpha=0$ (by \cite{Kry02}  or the fact $H^2(\Sp(2,\z), \z^2)=0$ \cite[Lemma A.3]{Kra19}), hence $i_{2*}(\alpha)=0$. Therefore $$\m(M)/\K(M) \cong ((H \times H)/\Delta_{d,k} )\rtimes \Sp(b,\z).$$

When $b \ge 4$, the following commutative diagram of exact sequences of $\mathrm{Sp}(b,\z)$-modules
$$\xymatrix{
0 \ar[r] & H \ar[r]^{\cdot d} \ar[d]^{\cdot k} & H \ar[r] \ar[d]^{i_1} & H\otimes \z/d \ar[r] \ar[d]^= & 0 \\
0 \ar[r] & H \ar[r]^{i_2 \ \ \ \ } & (H \times H)/\Delta_{d,k} \ar[r] & H \otimes \z/d \ar[r] & 0}$$
induces a commutative diagram of exact sequence of cohomology groups
$$\xymatrix{
H^1(\Sp(b,\z) ,H\otimes \z/d) \ar[r] \ar[d]^= & H^2(\Sp(b,\z), H) \ar[d]^{\cdot k} \ar[r]^{\cdot d} & H^2(\Sp(b,\z) , H) \ar[d]^{i_{1*}} \\
H^1(\Sp(b,\z) ,H \otimes \z/d) \ar[r]^{\delta} & H^2(\Sp(b,\z) ,H) \ar[r]^{i_{2*} \ \ \ \ \ \ \ } & H^2(\Sp(b,\z) , (H \times H)/\Delta_{d,k})}$$
The cohomology groups $H^*(\Sp(b,\z), H)$ are annihilated by $2$ (\cite[Lemma A.3]{Kra19}). When $M$ is spin, the integers $d$ and $k$ are both even or odd (see (\ref{eq:kspin})). When $d$ is odd, the group $H^1(\Sp(b,\z) ,H \otimes \z/d)$ is $0$; when $k$ is even, the map $k \cdot \colon H^2(\Sp(b,\z) ,H ) \to  H^2(\Sp(b,\z) ,H )$ is trivial. Therefore $\delta=0$, and $i_{2*}$ is injective. By \cite[Theorem 1.1]{Crowley09} $\alpha \ne 0$, therefore $i_{2*}\alpha \ne 0$. When $M$ is nonspin, the integers $d/2$ and $k$ are both even or odd. When $k$ is even, we again have $\delta=0$, and hence $i_{2*}(\alpha) \ne 0$; when $k$ is odd, the homomorphism $d \cdot \colon H^2(\Sp(b,\z) ,H) \to  H^2(\Sp(b,\z) , H)$ is trivial, and the homomorphism $k \cdot \colon H^2(\Sp(b,\z) ,H) \to  H^2(\Sp(b,\z) , H)$ is an isomorphism. This implies $i_{2*}=0$. This shows the statements about the splitting of the exact sequence (\ref{seqthm}).

The cocycle $\alpha \in H^2(\Sp(b,\z), H)$ corresponding to the extension problem of the upper line of diagram (\ref{diag:module}) is described in \cite[Corollary 3.5]{Crowley09}. Choosing a stable framing of $\#_r(S^3 \times S^3)$, the global derivative with respect to this framing induces an embedding $\m(\#_r(S^3 \times S^3))/\Theta_7 \to H \rtimes \Sp(b,\z)$, extending the embedding $2 \cdot \colon H \to H$, 
\begin{equation}
\xymatrix{
0 \ar[r] & H  \ar[r] \ar[d]^{2\cdot} & \m(\#_r(S^{3} \times S^{3}))/\Theta_{7} \ar[d] \ar[r] & \Sp(b,\z) \ar[d]^{=} \ar[r] & 1\\
0 \ar[r] & H \ar[r] & H \rtimes  \Sp(b,\z) \ar[r] & \Sp(b,\z) \ar[r] & 1}
\end{equation}
whose image is
$$\{ (b,A) \in H \rtimes \Sp(b,\z) \ | \ b \equiv s(A) \pmod 2 \},$$
where $s \colon \Sp(b,\z) \to H \otimes \z/2$, $A \mapsto \psi \cdot A - \psi$ is a $1$-cocycle associated with a quadratic refinement $\psi \colon H \to \z/2$ of the standard symplectic form constructed from the stable framing (c.~f.~\cite[Proposition 2.3, Definition 2.4]{Crowley09}). Therefore the embedding 
$$(H \times H)/\Delta_{d,k}=(H \times 2H)/\Delta_{d,2k} \subset (H \times H)/\Delta_{d,2k}$$ extends to an embedding $\m(M)/\K(M) \to ((H \times H)/\Delta_{d,2k}) \rtimes \Sp(b,\z)$, whose image is
$$\{( [a,b],A) \ |\ a,b \in H, A \in \Sp(b,\z), \ b \equiv s(A) \pmod 2 \}.$$
Note that by \cite[Lemma 3.3]{Crowley09}, every quadratic refinement $\psi$ can be obtained from a stable framing. This finishes the proof. 
\end{proof}

\begin{proof}[Proof of Theorem \ref{cor:quintic}]
The quintic hypersurface $X_5$ is spin, with $d=5$, $k=-25$, $l=-5$ and $b_3=204$. By Theorem \ref{thm:main1} and Theorem \ref{thm:kernel}, the center of $\m(X_5)$ is $\K(X_5) \cong \z/2$. Notice that the short exact sequence 
$$0 \to H \stackrel{i_2}{\rightarrow} (H \times H)/\Delta_{(5,-25)} \to H/5 \to 0$$
has a splitting $H/5 \to  (H \times H)/\Delta_{(5,-25)}$, $v \mapsto (v, -5v)$. 
Then the exact sequences in Theorem \ref{thm:main1} give a short exact sequence of $\mathrm{Sp}(204, \z)$-modules
$$0 \to H \times H/5 \to \m(X_5)/\K(X_5)\to \mathrm{Sp}(204, \z) \to 1$$
The embedding of $\m(X_5)/\K(X_5)$ into $(H \times H/5) \rtimes \mathrm{Sp}(204,\z)$ is given by  Theorem \ref{thm:quotient}. 
\end{proof}

\section{On certain simply-connected $7$-manifolds}\label{7-mfds} 
In this section we consider closed, simply-connected spin $7$-manifolds $N$ whose homology groups are isomorphic to that of $S^2 \times S^5$. Let $\mathcal N$ be the set of oriented diffeomorphism classes of these manifolds, by the results of \cite{KrSt}, it is determined by the Kreck-Stolz invariant. We recall the definition of the Kreck-Stolz invariant here. For details see \cite{KrSt}.

The normal $2$-type of $N$ is
$$p \colon B= \cp^{\infty} \times B\mathrm{Spin} \to BSO$$
where $p$ is the canonical projection on the second factor. Choose a generator of $H^2(N)$, there is a unique lifting $\ov{\nu} \colon N \to B$ of the normal Gauss map $\nu \colon N \to BSO$. Since the corresponding $B$-bordism group $\Omega_{7}(B,p)=\omsp_{7}(\cp^{\infty})=0$, there exists a normal $B$-bordism $G \colon W^{8} \to B$ such that $\partial (W, G)=(N, \ov{\nu})$. After taking connected sum with $\hp^{2}$'s (or $\ov{\hp^2}$'s) we may assume that the signature of $W$ is zero.

Let $x \in H^{2}(B)$ be a generator, $\frac{p_{1}}{2} \in H^{4}(B)$ be the pull-back of the universal spin Pontrjagin class in $H^{4}(B\mathrm{Spin})$, $\alpha=G^{*}(x^{2})$, $\beta=G^{*}(\frac{p_{1}}{2})$. Since $H^{3}(N)=H^{4}(N)=0$, $\alpha$ and $\beta$ restrict to $0$ on $\partial W=N$,  therefore we have unique cohomology classes $\ov{\alpha}$, $\ov{\beta} \in H^{4}(W, \partial W)$ which are the preimage of $\alpha$ and $\beta$ respectively. Now we define characteristic numbers
\begin{equation}\label{eqn:ks}
\begin{array}{rcl}
s_1(N) &= & \langle \ov{\beta}^2, [W,\partial W] \rangle
\\
s_2(N) & = & \langle \ov{\alpha}^2 + \ov{\alpha}\cup \ov{\beta}, [W,\partial W] \rangle \\
s_3(N) & = & \langle \ov{\alpha}\cup \ov{\beta}, [W,\partial W] \rangle \\
\end{array}
\end{equation}
These numbers don't depend on the choice of the generator of $H^2(N)$. 
We also need to consider the variation of these characteristic numbers by changing the coboundary $W$. This is in a lattice $L\subset \z^{3}$ generated by the corresponding characteristic numbers of closed $8$-dimensional $B$-manifolds with signature $0$. Therefore we obtain a well-defined invariant 
$$KS(N)=[(s_1(N),s_2(N),s_3(N))] \in \z^3/L.$$ 

A set of generators of the corresponding $8$-dimensional $B$-bordism group is 
\begin{equation}\label{eqn:gen1}
\begin{array}{rcl}
e_1 & = & [B^8]-8\cdot 28 [\hp^2] \\
e_2 & = & [S^2 \times S^2 \times S^2 \times S^2 \stackrel{\sum_{i=1}^4 x_i}{\longrightarrow} \cp^{\infty} \stackrel{i}{\rightarrow} \cp^{\infty} \times K(\z,4)] \\
e_3 & = & [V(2) \hookrightarrow \cp^{\infty} \stackrel{i}{\rightarrow} \cp^{\infty} \times K(\z,4)] - 2 [\hp^2] 
\end{array}
\end{equation}
where $B^8=\natural_{28}M(E_8) \cup D^8$ with $M(E_8)$ the $E_8$-plumbing manifold,  $V(2)$ is a degree $2$ hypersurface in $\cp^5$, and $x_i \in H^2(S_{i}^2)$ are generators. The characteristic numbers are give in the following table
\begin{equation}\label{lattice1}
\begin{array}{c|ccc}
& e_1 & e_2 & e_3 \\
\hline
s_1 &  -8 \cdot 28 & 0 & 0 \\
s_2 & 0 & 24 & 0 \\
s_3 &0
 & 0 &  -2
\end{array}
\end{equation}
The lattice $L$ is generated by the column vectors of the table, and 
$$\z^{3} / L \cong \z/(8 \cdot 28)\oplus \z/24 \oplus \z/2.$$ 

Let $(W,G)$ be a $B$-bordism between $(N_1, \ov{\nu}_1)$ and $(N_2,\ov{\nu}_2)$, then the surgery obstruction $\theta(W,G)$ to $(W,G)$ being $B$-bordant to an $h$-cobordism is in $l_{8}(\z)$. By \cite[Theorem 6]{Kr99}, $\theta(W,G)$ is elementary if $KS(N_1)=KS(N_2)$.
Therefore $KS(N)$ is a complete invariant of manifolds under consideration. 

\begin{lem}\label{lem:inv}
There is an injective map between sets
$$KS=[(s_1,s_2, s_3)] \colon \mathcal N \to \z^3/L = \z/(8 \cdot 28) \oplus \z/24 \oplus \z/2$$
\end{lem}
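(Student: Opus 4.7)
The plan is to establish two things: first, that the triple $(s_1, s_2, s_3)$ does not depend on the auxiliary choices, so $S$ descends to a well-defined map on $\mathcal N$; and second, that $S$ is injective.

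For the first step, I would observe that since $H^3(N) = H^4(N) = 0$, the long exact sequence of the pair $(W, \partial W)$ forces the lifts $\ov\alpha, \ov\beta \in H^4(W, \partial W)$ of $\alpha = G^*(x^2)$ and $\beta = G^*(p_1/2)$ to be unique, so no indeterminacy arises from this step. The only remaining choice is the null-bordism $(W, G)$ of signature zero. Given two such, I would glue them along $N$ with opposite orientations to form a closed $B$-manifold $W_1 \cup_N (-W_2)$; its signature is zero by Novikov additivity, so it represents a class in $\Omega_8(B,p)_0$. Since $L$ is by construction the subgroup of $\z^3$ generated by the characteristic-number triples of such classes, the difference $(s_i(W_1) - s_i(W_2))$ lies in $L$. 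Combined with the already-noted independence of the generator of $H^2(N)$, this shows $S$ is well-defined, and the table \ref{lattice1} identifies $\z^3/L$ with $\z/(8 \cdot 28) \oplus \z/24 \oplus \z/2$.

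For injectivity I would argue via modified surgery theory. Suppose $S(N_1) = S(N_2)$. Since $\Omega_7(B,p) = \omsp_7(\cp^{\infty}) = 0$, there exists a $B$-bordism $(V,H)$ from $(N_1, \ov\nu_1)$ to $(N_2, \ov\nu_2)$, and by taking connected sum with copies of $\hp^2$ or $\ov{\hp^2}$ we may arrange $\mathrm{sign}(V) = 0$. Modified surgery \cite{Kr99} attaches to $(V,H)$ an obstruction $\theta(V,H) \in l_8(\z)$ that is elementary precisely when $V$ is $B$-bordant rel boundary to an $h$-cobordism. The key input, applied exactly as in Proposition \ref{prop:injhomo}, is \cite[Theorem 5b]{Kr18}: the triples attached to $V$ at its two ends differ by an element of $L$ if and only if $\theta(V,H)$ is elementary. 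Under the hypothesis $S(N_1)=S(N_2)$ this holds, so one may replace $V$ by an $h$-cobordism, and the $h$-cobordism theorem, applicable because the $N_i$ are simply-connected of dimension $7 \ge 5$, yields an orientation-preserving diffeomorphism $N_1 \cong N_2$.

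The main conceptual ingredient is the translation of the cobordism characteristic numbers $(s_1, s_2, s_3)$ into a complete obstruction for the surgery problem, supplied by \cite[Theorem 5b]{Kr18}; this is the place where the specific form of the lattice $L$ gets used. With this in hand, the rest is routine: existence of a null-bordism from the vanishing of $\Omega_7(B,p)$, signature normalization by $\hp^2$-connected sums, Novikov additivity, and the high-dimensional $h$-cobordism theorem. No further conceptual obstacle is expected.
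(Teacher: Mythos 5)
Your proof is correct and follows essentially the same route as the paper's: well-definedness by absorbing the choice of signature-zero null-bordism into the lattice $L$ (via gluing $W_1 \cup_N (-W_2)$ and Novikov additivity), then injectivity via the elementary-obstruction criterion of modified surgery followed by the $h$-cobordism theorem. The one small inaccuracy is the citation for the surgery criterion: for this lemma, with normal type $B = \cp^\infty \times B\mathrm{Spin}$ and no $K(\z,4)$ factor, the paper cites \cite[Theorem 6]{Kr99} rather than \cite[Theorem 5b]{Kr18}, which it reserves for the version appearing in Proposition~\ref{prop:injhomo}; either suffices and the argument is unaffected, but the attribution differs.
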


\begin{rem}\label{rem:Ss}
In \cite{KrSt} the Kreck-Stolz invariants $S_i \in \mathbb Q /\mathbb Z $ ($i=1,2,3$) are defined by the defect of (twisted) Dirac operators. An explicit formula expressing the $S_i$-invariants by characteristic numbers is given in \cite[(2.4)]{KrSt2}. The relation between the $S_i$-invariants and the $s_i$-invariants defined here is $$s_1/(8\cdot 28)=S_1, \ \ \ s_2/24=S_2 \ \ \ s_3/2=S_3-2^4S_2$$
\end{rem}

\smallskip

Next we construct certain manifolds in $\mathcal N$ and compute their $KS$-invariant, which will be used in the next section.
Let $\xi_m$ be a $4$-dimensional oriented real vector bundle over $\cp^2$, with
$$w_2(\xi_m) \equiv x \pmod 2, \ \ \ e(\xi_m)=x^2, \ \ \ p_1(\xi_m)=(3+4m)x^2, \ \ \ m \in \z$$
where $x \in H^2(\cp^2)$ is a generator. (These bundles can be constructed as follows: let $p \colon \cp^2 \to \cp^2 \vee S^4$ be the pinch map, $\xi=p^*(\eta \oplus \varepsilon^2 \vee \xi')$, where $\eta$ is the tautological line bundle over $\cp^2$ and $\xi'$ is a $4$-dimensional vector bundle over $S^4$. Then $w_2(\xi)\ne 0$, $e(\xi)=p^*e(\xi')$ and $p_1(\xi)=p^*(p_1(\xi'))+x^2$. Note that $4$-dimensional vector bundles over $S^4$ can have $e(\xi')=2a+b$ and $p_1(\xi')=-2b$ for all $a$, $b \in \z$.)

Let $N_m=S(\xi_m)$ be the sphere bundle of $\xi_m$. Since the Euler class of $\xi_{m}$ is $x^{2}$, from the Gysin sequence of the sphere bundle $S(\xi_m)$ one gets $H_*(N) \cong H_*(S^2 \times S^5)$, hence  $N_m \in \mathcal N$. Using $W=D(\xi_m) \# \ov{\hp^2}$ we compute the Kreck-Stolz invariant of $N_m$  from equations (\ref{eqn:ks}) and get
\begin{eqnarray}\label{eqn:nk}
s_1(N_m) & = & 4m(m+3)  \nonumber  \\
s_2(N_m) & = & -2(m+1)   \\
s_3(N_m) & = & 2m+3  \nonumber
\end{eqnarray}

\section{Generalized Kreck-Stolz invariant}\label{sec:gks}
Now we come to the problem of determining the kernel 
$$\K(M)=\mathrm{Ker} \{ v_{x,p} \colon \T(M) \to H \times H / \Delta_{d,k}\}.$$ 
This task will be done by defining an invariant of the mapping torus. This invariant is not only crucial for the computation of $\mathcal K(M)$, but also important for the study of the properties of $\m(M)$. We first present the results of $\K(M)$ in the following theorem. Recall from \S \ref{sec:diff} that the first Pontrjagin class of $M$ is $p_1(M)=(24 l(M) + 4d(M))y$, where $l(M)$ is an integer. In the sequel we denote $l(M)$ by $l$ for simplicity.

\begin{thm}\label{thm:kernel}
The group $\K(M)$ is the center of $\m(M)$. There is an isomorphism 
$$S \colon \K(M) \to \z^3 / L(M),$$ 
where $L(M)$ is a lattice of rank $3$. The group $\K(M)$ is given as follows
\begin{enumerate}
\item When $M$ is spin 
$$\K(M) \cong (\K(M))_{(2)} \times \z/3^{b} \times \z/7^{c}$$
where  
$$b = \left \{ \begin{array}{cl}
1 & d \equiv 0 \pmod 3 \\
0 & \mathrm{otherwise} \end{array} \right. \ \ \ c = \left \{ \begin{array}{cl}
1 & d- l \equiv 0 \pmod 7 \\
0 & \mathrm{otherwise} \end{array} \right.$$
The group $(\K(M))_{(2)}$ is given in the following table
$$\begin{array}{c|c|c|c|c}
 & d \mathrm{\ odd} & d \equiv 2 \pmod 4 & d \equiv 4 \pmod 8 & d \equiv 0 \pmod 8 \\
\hline
l \mathrm{\ odd } & \z/2 & \z/2 \times \z/4 & (\z/2)^{2}& (\z/2)^{2}    \\
\hline
l \mathrm{\ even } &\z/4 & \z/2 \times \z/4 & (\z/2)^{2} \times \z/4 &  \z/2 \times \z/a \times \z/4

\end{array}
$$
where
$$ a = \left \{ \begin{array}{ll}
2 &  l \equiv 2 \pmod {4}  \\
4 &  l \equiv 0 \pmod {4} \\
\end{array} \right.   $$

\item When $M$ is non-spin 
$$\K(M) \cong \z/2^{a} \times \z/3^{b} \times \z/7^{c},$$ 
where
$$b = \left \{ \begin{array}{cl}
1 & d \equiv 0 \pmod 3 \\
0 & \mathrm{otherwise} \end{array} \right. \ \ \ c = \left \{ \begin{array}{cl}
1 & d \equiv l \equiv 0 \pmod 7 \\
0 & \mathrm{otherwise} \end{array} \right.$$
The value of $a$ is given in the following table (where $\delta =d/2$)
$$\begin{array}{c|c|c|c|c}
& \delta \mathrm{\ odd} & \delta \equiv 2 \pmod 4 & \delta \equiv 4 \pmod 8 & \delta \equiv 0 \pmod 8 \\
\hline
l \mathrm{\ odd} & 0 & 1 & 2 & 2 \\
\hline
l \mathrm{\ even} & 0 & 1 & 2 & 3 \\
\end{array}$$
\end{enumerate}

\end{thm}

We first prove Theorem \ref{thm:kernel} in the spin case in \S \ref{sec:gks}-\ref{sec:bd}. The treatment of the non-spin case is similar but with slight modifications. This is put in \S \ref{sec:nonspin}.

\medskip

For the mapping class group of surfaces, the Birman-Craggs-Johnson invariants are defined using the relative mapping class group $\m(F_g,D)$ which is the mapping class group of diffeomorphisms fixing an embedded disk pointwise. In our situation we have to do the same and pass to $\m(M,D)$. The group $\m(M,D)$ is the group of isotopy classes of diffeomorphisms of $M$ which are the identity on an embedded $6$-dimensional disk $D \subset M$. The groups $\m(M)$ and $\m(M,D)$ are related by the following exact sequence (\cite[p.265]{Wall1})
\begin{equation}\label{seq:br}
\pi_1GL(6,\mathbb R) \to \m(M,D) \to \m(M) \to 1.
\end{equation}
The image of the generator of $\pi_1(GL_6(\mathbb R))$ in $\m(M,D)$ is called the \textbf{boundary mapping class}, it is represented by the following \textbf{boundary diffeomorphism}:
let $\alpha \colon [0,1] \to SO(6)$ be a loop representing a generator of $\pi_{1}SO(6)$, identify a collar neighborhood of $\partial (M-\mathring D)$ with $S^{n-1} \times [0,1]$, define
$$\delta \colon S^{n-1} \times [0,1] \to S^{n-1} \times [0,1], \ \ \ (x,t) \mapsto (\alpha (t) \cdot x, t)$$
and extend it to a diffeomorphism of $M-\mathring D$ by identity on the complement of the collar neighborhood. 

Since the representation $\rho \colon \m(M) \to \mathrm{Sp}(b_3(M),\z)$ is surjective, the homomorphism $\m(M,D) \to \Sp(b_3(M), \z)$ is also surjective, whose kernel  we denote by $\T(M,D)$. Completely analogous to \S \ref{subsec:var} we define the ``variation of the cup product and the Pontrjagin class" homomorphism $v_{x,p} \colon \T(M,D) \to (H \times H)/\Delta_{d(M),k(M)}$, whose kernel we denote by $\K(M,D)$.  Then the exact sequence (\ref{seq:br}) reduces to 
 the exact sequence
\begin{equation}\label{seq:br1}
\pi_1GL(6,\mathbb R) \to \K(M,D) \to \K(M) \to 1. 
\end{equation}
The forgetful map $\K(M,D) \to \K(M)$ is an isomorphism if and only if the boundary mapping class is trivial in $\K(M,D)$.

In this section, based on the recent development of modified surgery theory in \cite{Kr18}, we define a Kreck-Stolz type invariant for elements in $\K(M, D)$. This invariant defines an injective homomorphism
$$S \colon \K(M,D) \rightarrowtail \z^3/\bar L(M),$$
where $\bar L(M)$ is a subgroup of finite index, which will be precisely described in the paragraph before Proposition \ref{prop:injhomo}.
In \S \ref{sec:rel} we use this invariant to study the group $\m(S^{2} \times D^{4}, \partial)$, which is the mapping class group of diffeomorphisms which are the identity in a neighborhood of the boundary, and where isotopies are also the identity in a neighborhood of the boundary.
In \S \ref{sec:K} we determine the image of $S$ by comparing $\K(M,D)$ with $\m(S^{2} \times D^{4}, \partial)$.

\smallskip

In the following we extend the class of manifolds under consideration slightly. Namely, we consider simply-connected $6$-manifolds whose even dimensional homology groups are isomorphic to that of $\cp^3$, i.e.~we include the case where the degree of the manifold equals $0$. The reason for this extension is that the case of $S^2 \times S^4$ is crucial for the computation. We first deal with the spin case. Let $M$ be such a manifold, $f \colon (M, D) \to (M, D)$ be a diffeomorphism such that the isotopy class of $f$, denoted by  $[f]$, is in $\K(M,D)$. Let $M_{f}$ be the mapping torus. Let
$$B=\cp^{\infty} \times K(\z,4) \times B\mathrm{Spin},$$
and let $p \colon B \to B\mathrm{Spin} \to  BSO$ be the canonical projection. Then the normal Gauss map $M_{f} \to BSO$ is lifted to $B$ as follows: Fix a generator $x \in H^2(M)=H^{2}(M_{f})$, take a map $M_{f} \to \cp^{\infty}$ representing $x$, which is unique up to homotopy. For $d \ne 0$, let $y=x^{2}/d \in H^{4}(M)$ be the generator; for $d=0$ but but $p_{1}(M) \ne 0$, let $y=p_{1}(M)/4k \in H^{4}(M)$; for $d=0=p_{1}(M)$, take $y \in H^{4}(M)$ an arbitrary generator. For $[f] \in \K(M,D)$, since $v_{x,p}(f)=0$,  there is a unique $y' \in H^{4}(M_{f})$ which is a preimage of $y \in H^{4}(M)$ under $H^{4}(M_{f}) \to H^{4}(M)$, such that $x^{2}-dy'=0$ and $\frac{p_{1}}{2}(M_{f})-2ky'=0$.  Take a map $M_{f} \to K(\z,4)$ representing $y'$,  which is unique up to homotopy. Finally choose the spin structure on the stable normal bundle of $M_{f}$, which restricts to the trivial spin structure on the preferred embedding $D \times S^{1} \subset M_{f}$. This induces a map $M_{f} \to B\mathrm{Spin}$,  which is unique up to vertical homotopy covering the classifying map of the stable normal bundle of $M_f$. Putting all these maps together we get $g \colon M_{f} \to B$ which is a lift of the normal Gauss map $M_{f}\to BSO$.

The pair $(M_f, g)$ represents a bordism class  in the $B$-bordism group $\Omega_{7}(B,p)= \omsp_{7}(\cp^{\infty} \times K(\z,4))$, which is zero by a  computation with the Atiyah-Hirzebruch spectral sequence, c.~f.~\cite[Theorem 6]{Kr18}. Therefore there exists a normal $B$-bordism $G \colon W^{8} \to B$ such that $\partial (W, G)=(M_{f}, g)$. After taking connected sum with quaternionic  projective planes $\hp^2$'s (or $\ov{\hp^2}$'s) we may assume that the signature of $W$ is zero.

Let $\ov x \in H^2(B)$ be the pull-back of the canonical generator of $H^{2}(\cp^{\infty})$, $\ov y \in H^4(B)$ be the pull-back of the canonical generator  of $H^4(K(\z,4))$ and $\frac{p_1}{2} \in H^4(B)$ be the pull-back of the universal spin Pontrjagin class in $H^4(B\mathrm{Spin})$. Let 
$$\alpha = G^*(\ov x^2-d\ov y), \ \ \beta =G^*(\frac{p_1}{2}+2k\ov y).$$ 
Then by the construction above, the restriction of  $\alpha$ and $\beta$ on $\partial W=M_{f}$ are zero, so we have relative cohomology classes $\ov{\alpha}$, $\ov{\beta}\in H^4(W, \partial W)$ (not canonical), which maps to $\alpha$ and $\beta$ respectively. Now define characteristic numbers
$$\begin{array}{rcl}
s_1 & = & \langle \ov{\beta}^2, [W,\partial W] \rangle, \\
s_2 & = & \langle \ov{\alpha}^2 + \ov{\alpha}\cup \ov{\beta}, [W,\partial W] \rangle, \\
s_3 & = & \langle \ov{\alpha}\cup \ov{\beta}, [W,\partial W] \rangle. 
\end{array}$$
Elementary algebraic topology shows that these numbers don't depend on the choices of the cohomology classes $\ov{\alpha}$ and $\ov{\beta}$. These characteristic numbers depend on the choice of $W$. The variation of these characteristic numbers for different $W$ is in a lattice $\bar L(M) \subset \z^{3}$,  consisting of characteristic numbers of closed $8$-dimensional $B$-manifolds with signature $0$.  We get a well-defined invariant 
$$S([f])=[(s_1,s_2,s_3)] \in \z^3/\bar L(M).$$ 
called the generalized Kreck-Stolz invariant of $f$.

To determine the lattice $\bar L(M)$ we need to compute the subgroup of the $8$-dimensional $B$-bordism group $\Omega_8(B)=\omsp_8(\cp^{\infty} \times K(\z,4))$, consisting of manifolds whose signature is zero. Computation with the Atiyah-Hirzebruch spectral sequence (c.~f.~\cite[Theorem 10]{Kr18}) shows that this group is isomorphic to $\z^6$, with generators
\begin{equation}\label{gen:2}
\begin{array}{rcl}
e_1 & = & [B^8]-8\cdot 28 [\hp^2] \\
e_2 & = & [S^2 \times S^2 \times S^2 \times S^2 \stackrel{\sum_{i=1}^4 x_i}{\longrightarrow} \cp^{\infty} \stackrel{i}{\rightarrow} \cp^{\infty} \times K(\z,4)] \\
e_3 & = & [V(2) \hookrightarrow \cp^{\infty} \stackrel{i}{\rightarrow} \cp^{\infty} \times K(\z,4)] - 2 [\hp^2] \\
e_4 & = & [\hp^2 \stackrel{y}{\rightarrow} K(\z,4) \to \cp^{\infty} \times K(\z,4)]-[\hp^2] \\
e_5 & = & [S^4 \times S^4 \stackrel{y_1+y_2}{\longrightarrow} K(\z,4)] \\
e_6 & = & \frac{1}{2}[S^2 \times S^2 \times S^4 \stackrel{(x_1+x_2, y)}{\longrightarrow} \cp^{\infty} \times K(\z,4)]
\end{array}
\end{equation}
where the first three elements are the same as Equation (\ref{eqn:gen1}),  $y_i \in H^4(S_{i}^4)$ and $y \in H^4(\hp^2)$ are generators.

The characteristic numbers of these generators are given in the following table, and the lattice $\bar L(M)$ is generated by column vectors.
\begin{equation}\label{lattice}
\begin{array}{c|cccccc}
 & e_1 & e_2 & e_3 & e_4 & e_5 & e_6 \\
 \hline  s_1 & -8 \cdot 28 & 0 & 0 & 4k^2 -4k & 8k^2 & 0 \\
 s_2 & 0 & 24 & 0 & d^2-d(2k-1) & 2d^2-4dk & 2k-2d \\
s_3 & 0 & 0 & -2 & -d(2k-1) & -4dk& 2k \\
\hline
 \end{array}
 \end{equation}


\smallskip

\begin{prop}\label{prop:injhomo}
The generalized Kreck-Stolz invariant defines an injective homomorphism
$$S \colon \K(M, D) \to \z^3/\bar L(M)$$
\end{prop}
\begin{proof}
We first show that $S$ is a group homomorphism. Given diffeomorphisms $f_{1}$ and $f_{2}$, with normal $B$-bordism $(W_{1}, G_{1})$ and $(W_{2}, G_{2})$ for the corresponding mapping tori. Note that we have the canonical embeddings of $M \times [1/3, 2/3]$ into the mapping tori. We may take $W=W_{1} \cup_{M \times [1/3,2/3]} W_{2}$ to be the normal $B$-bordism for the mapping torus of $f_1\circ f_2$.  In this situation, the correction term $\sigma(V;A,B,C)$ in Wall's formula of the additivity of the signature (\cite[Theorem]{Wall2}) vanishes, we have $\mathrm{sign}(W)=\mathrm{sign}(W_1) + \mathrm{sign}(W_2)=0$. Therefore $W$ can be taken as  the null-bordism for computing the $S$-invariant of $f_{1}\circ f_{2}$. 
Let $\alpha$, $\beta \in H^4(W)$, $\alpha_j$, $\beta_j \in H^4(W_j)$ ($j=1,2$) be the classes  as in the definition of the $S$-invariant, $i_j^* \colon H^4(W) \to H^4(W_j)$ be the homomorphism induced by the inclusion map, then we have $i_j^*(\alpha) = \alpha_j$, $i_j^*(\beta)=\beta_j$. Let $\ov{\alpha}$, $\ov{\beta} \in H^4(W, \partial W)$ be a preimage of $\alpha$, $\beta$, respectively, then from the commutative diagram 
$$\xymatrix{
H^{4}(W, \partial W) \ar[d] \ar[r]^{(\bar i_1^*, \bar i_2^*) \ \ \ \ \ \ \ \ \ \ \ \ } & H^{4}(W_{1}, \partial W_{1}) \oplus H^{4}(W_{2}, \partial W_{2}) \ar[d] \\
H^{4}(W) \ar[r]^{(i_1^*, i_2^*) \ \ \ \ \ \ \ } & H^{4}(W_{1}) \oplus H^{4}(W_{2})}$$
we see that the image of $\ov{\alpha}$ and $\ov{\beta}$ in $H^4(W_j, \partial W_j)$, denoted by $\bar \alpha_j$ and $\bar \beta_j$,  is a preimage of $\alpha_j$ and $\beta_j$, respectively.
From this one concludes that,  the generalized Kreck-Stolz invariant of $f_{1} \circ f_{2}$, defined as characteristic numbers of $W$, is the sum of the characteristic numbers of $W_{1}$ and $W_{2}$. For example, we have 
\begin{eqnarray*}
\langle \bar \beta^2, [W, \partial W] \rangle & = &\langle \bar \beta^2, \bar i_{1*}[W_1, \partial W_1]+\bar i_{2*}[W_2, \partial W_2] \rangle \\
& = & \langle \bar \beta_1^2, [W_1, \partial W_1\rangle + \langle \bar \beta_2^2, [W_2, \partial W_2] \rangle.
\end{eqnarray*}

To show the injectivity of $S$, notice that $(W,G)$ can be viewed as a $B$-bordism between $M \times [0,1]$ and $M \times [0,1]$, where the boundaries $M \times 0$ and $M \times 1$ are identified by $\mathrm{id}_{M}$ and $f$ respectively. By modified surgery theory \cite{Kr99}, the surgery obstruction $\theta(W,G)$ to $(W,G)$ being $B$-bordant to an $h$-cobordism relative boundary is in $l_{8}(\z)$. It is shown in \cite[Theorem 5b]{Kr18} that $\theta(W,G)$ is elementary if
$$S([f])=[(s_1(f), s_2(f), s_3(f))]=0\in \z^3/\bar L(M)$$
In this situation one may replace $W$ by an $h$-cobordism (\cite[Theorem 3]{Kr99}), still denoted by $W$.  The embedding $D \times S^{1} \subset M_{f}$ extends to an embedding $D \times D^{2} \subset W$, which can be viewed as an $h$-cobordism between $D \times I$ and $D \times I$. By the $h$-cobordism theorem, there is a diffeomorphism
$$(M \times [0,1], D \times [0,1]) \to (M \times [0,1], D \times [0,1])$$
which can be viewed as a pseudo-isotopy between $f$ and $\mathrm{id}_{M}$ rel $D$. Hence by Cerf's pseudo-isotopy theorem \cite{Cerf}, $f$ is isotopic to $\mathrm{id}_{M}$ rel $D$. This shows the injectivity of $S$.
\end{proof}

\section{The mapping class group of $(S^2 \times D^4, \partial)$}\label{sec:rel}

In this section we determine the mapping class group of $(S^{2} \times D^{4}, \partial )$, denoted by $\m(S^2 \times D^4, \partial)$. This is the group of isotopy classes of diffeomorphisms of $S^2 \times D^4$, which are the identity near the boundary.  This is crucial for the determination of the image of $S$, and the generators of the mapping class group.

\subsection{The group $\T(S^2 \times S^4)$}\label{subsec:s2}
Let $\T(S^2 \times S^4)$ be the subgroup of the mapping class group $\m(S^2 \times S^4)$, consisting of isotopy classes of diffeomorphisms of $S^2 \times S^4$ which act trivially on homology. Recall that $\mathcal N$ is the set of  diffeomorphism classes of closed oriented simply-connected spin $7$-manifolds $N$ with $H_*(N)$ isomorphic to $H_*(S^2 \times S^5)$ (see \S  \ref{7-mfds}). We define a map
$$\Psi \colon \T(S^2 \times S^4) \to \mathcal N$$
as follows: given a diffeomorphism $f \colon S^2 \times S^4 \to S^2 \times S^4$ which acts trivially on homology, let $N=(S^2 \times D^5) \cup_f (S^2 \times D^5)$. Then $N$ is a simply-connected spin manifold with $H_*(N) \cong H_*(S^2 \times S^5)$. The map $\Psi$ is  well-defined
since isotopic diffeomorphisms produce diffeomorphic $7$-manifolds.

\begin{lem}\label{lem:bij}
$\Psi$ is surjective.
\end{lem}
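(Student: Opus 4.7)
The plan is to realise every $N \in \mathcal N$ as $S^{2} \times D^{5} \cup_{\phi} S^{2} \times D^{5}$ for some $\phi \in S\m(S^{2} \times S^{4})$. Given $N$, Hurewicz gives $\pi_{2}(N) \cong H_{2}(N) \cong \z$, and since $2 \cdot 2 < 7$ general position lets me represent a generator by an embedded $2$-sphere $\Sigma \subset N$. Its oriented rank-$5$ normal bundle $\nu$ satisfies $w_{2}(\nu) = w_{2}(TN)|_{\Sigma} = 0$ because $N$ is spin and $T\Sigma$ is stably trivial; as oriented rank-$5$ bundles over $S^{2}$ are classified by $\pi_{1}(SO(5)) = \z/2$ detected by $w_{2}$, the bundle $\nu$ is trivial. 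A closed tubular neighborhood of $\Sigma$ is therefore $U \cong S^{2} \times D^{5}$, and I set $W := N \setminus \mathrm{int}(U)$.

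The second step is to show $W \cong S^{2} \times D^{5}$. The long exact sequence of $(N, U)$, combined with excision $H_{*}(N, U) \cong H_{*}(W, \partial W)$ and Poincar\'e--Lefschetz duality, gives $H^{*}(W) \cong H^{*}(S^{2})$, while Van Kampen applied to $N = U \cup W$ along the simply connected intersection $S^{2} \times S^{4}$ yields $\pi_{1}(W) = 0$. I now repeat the first step inside $W$: a generator of $H_{2}(W)$ is represented by an embedded $S^{2} \subset W$ with trivial (spin) normal bundle, producing a tubular neighborhood $V \cong S^{2} \times D^{5}$. Because $V \hookrightarrow W$ is a homology isomorphism between simply connected spaces, it is a homotopy equivalence by Whitehead's theorem.

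The complement $W_{0} := W \setminus \mathrm{int}(V)$ is then an $h$-cobordism between $\partial V \cong S^{2} \times S^{4}$ and $\partial W \cong S^{2} \times S^{4}$: another Van Kampen argument gives $\pi_{1}(W_{0}) = 0$, and excision yields $H_{*}(W_{0}, \partial V) \cong H_{*}(W, V) = 0$, with the vanishing of $H_{*}(W_{0}, \partial W)$ following by Poincar\'e--Lefschetz duality. Smale's $h$-cobordism theorem (boundary dimension $6 \ge 5$) therefore gives $W_{0} \cong (S^{2} \times S^{4}) \times I$, and hence $W \cong V \cong S^{2} \times D^{5}$. Gluing $U$ and $W$ along the identity on $\partial U = \partial W$ under these diffeomorphisms realises $N$ as $S^{2} \times D^{5} \cup_{\phi} S^{2} \times D^{5}$ for some orientation-preserving diffeomorphism $\phi$ of $S^{2} \times S^{4}$.

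Finally I arrange that $\phi$ acts trivially on $H_{*}(S^{2} \times S^{4})$. Since $H_{2}$ and $H_{4}$ are infinite cyclic and the cup pairing $H^{2} \otimes H^{4} \to H^{6}$ is perfect, any orientation-preserving self-diffeomorphism of $S^{2} \times S^{4}$ acts on the pair $(H_{2}, H_{4})$ either as $(1,1)$ or as $(-1,-1)$. In the second case, I post-compose $\phi$ with the boundary restriction of the orientation-preserving involution $\alpha(x,y) = (-x,-y)$ on $S^{2} \times D^{5}$; this involution acts as $(-1,-1)$ on boundary homology and extends over $U$ without altering the diffeomorphism type of $N$, so the resulting $\phi$ acts as $(1,1)$. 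Then $[\phi] \in S\m(S^{2} \times S^{4})$ and $\Psi([\phi]) = N$, proving surjectivity. The main obstacle is the identification $W \cong S^{2} \times D^{5}$; the other steps are straightforward once the $h$-cobordism theorem has done the work.
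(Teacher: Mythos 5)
Your proposal is correct and follows essentially the same approach as the paper's proof: embed a thickened $2$-sphere $S^{2}\times D^{5}$ representing a generator of $H_{2}(N)$ (trivial normal bundle via the spin hypothesis), repeat inside the complement to produce a second such thickened sphere, and apply the $h$-cobordism theorem to the region between the two to conclude $N \cong S^{2}\times D^{5}\cup_{\phi}S^{2}\times D^{5}$. You spell out the details the paper marks as "easy to see" — in particular the $h$-cobordism verification and the final adjustment ensuring $\phi$ acts trivially on homology — but the decomposition and the key ingredients are identical.
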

\begin{proof}
Given $N \in \mathcal N$, there is an embedding $\varphi_1 \colon S^2 \to N$ representing a generator of $H_2(N)$. The normal bundle of this embedding is trivial since $N$ is spin, thus $\varphi_1$ extends to an embedding $\varphi_1 \colon S^2 \times D^5 \hookrightarrow N$. The complement $N-\varphi_1(\mathrm{int}(S^2 \times D^5))$ is simply-connected, spin, and has the same homology as $S^2$. Therefore there is a second embedding $\varphi_2 \colon S^2 \times D^5 \hookrightarrow N-\varphi_1(\mathrm{int}(S^2 \times D^5))$. By computing the relative homology groups one shows that $N-(\varphi_1(\mathrm{int}(S^2 \times D^5))\cup \varphi_2(\mathrm{int}(S^2 \times D^5)))$ is an $h$-cobordism. Therefore there exists a diffeomorphism $f \colon S^2 \times S^4 \to S^2 \times S^4$ such that $N = S^2 \times D^5 \cup_f S^2 \times D^5$. Since $H_*(N) \cong H_*(S^2 \times S^5)$, the induced homomorphism of $f$ on homology must be trivial. 
\end{proof}

Let $\T(S^2 \times S^4, D)$ be the subgroup of the mapping class group $\m(S^2 \times S^4,D) $, consisting of those isotopy classes whose action on $H_*(S^2 \times S^4)$ is trivial. Recall in last section we defined the $S$-invariant $S \colon \T(S^2 \times S^4, D) \to \z^3/\bar{L}(S^2 \times S^4)$. The lattice $\bar{L}(S^2 \times S^4)$ is given by table \ref{lattice} for $d=k=0$. Hence it equals the lattice $L$ defined by table \ref{lattice1}.
By the following lemma, the forgetful map $\T(S^{2} \times S^{4}, D) \to \T(S^{2} \times S^{4})$ is an isomorphism. Therefore we identify these two groups and view $S$ as an injective homomorphism $S \colon \T(S^{2} \times S^{4}) \to \z^3/L$.

\begin{lem}\label{lem:S1action}
Let $M^{n}$ ($n \ge 3$) be a closed smooth manifold with a non-trivial $S^{1}$-action. Let $p \in M$ be a fixed-point, such that the isotropy representation at $p$ represents the non-trivial element in $\pi_{1}GL(n,\mathbb R)$. Then the boundary diffeomorphism is isotopic to the identity in $\m(M, D)$
\end{lem}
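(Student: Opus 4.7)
The plan is to use the $S^{1}$-action to explicitly produce a loop in $\mathrm{Diff}(M)$ whose restriction to $D$ realizes the nontrivial element of $\pi_{1}SO(n)$; exactness in the relevant fibration sequence then forces the boundary diffeomorphism to vanish in $\m(M,D)$.

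First, by the slice theorem I would choose $D$ to be a small $S^{1}$-invariant disk around $p$ on which the action is linear and given by the isotropy representation $\rho\colon S^{1}\to SO(n)$. This is legitimate because the isomorphism class of $\m(M,D)$ does not depend on the choice of embedded disk. The action then supplies a loop $\phi\colon S^{1}\to\mathrm{Diff}(M)$, $\theta\mapsto \phi_{\theta}$, based at $\mathrm{id}_{M}$, hence a class $[\phi]\in\pi_{1}(\mathrm{Diff}(M),\mathrm{id})$.

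Next, I would identify the paper's exact sequence
$$\pi_{1}GL(n,\mathbb{R})\to\m(M,D)\to\m(M)\to 1$$
with the tail of the homotopy long exact sequence of the restriction fibration
$$\mathrm{Diff}(M,D)\hookrightarrow\mathrm{Diff}(M)\xrightarrow{\mathrm{ev}}\mathrm{Emb}(D,M),\quad f\mapsto f|_{D},$$
namely
$$\pi_{1}\mathrm{Diff}(M)\xrightarrow{\mathrm{ev}_{*}}\pi_{1}\mathrm{Emb}(D,M)\xrightarrow{\partial}\m(M,D)\to\m(M)\to 1.$$
Here $\pi_{1}GL(n,\mathbb{R})=\pi_{1}SO(n)$ embeds into $\pi_{1}\mathrm{Emb}(D,M)\simeq\pi_{1}\mathrm{Fr}^{+}(M)$ as the fiber over $p$ of the oriented frame bundle (via ``rotate $D$ about its center''), and the boundary diffeomorphism is by definition $\partial$ of the nontrivial generator.

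Finally, I would compute $\mathrm{ev}_{*}[\phi]$: because $D$ is $S^{1}$-invariant with action $\rho$, the restricted loop $\theta\mapsto\phi_{\theta}|_{D}$ equals $\theta\mapsto\rho(\theta)$ acting linearly on $D$. This lies in the fiber of $\mathrm{Emb}(D,M)$ over $p$ and represents $[\rho]\in\pi_{1}SO(n)$, which by hypothesis is the nontrivial class. Hence the nontrivial element of $\pi_{1}SO(n)$ lies in $\mathrm{Im}(\mathrm{ev}_{*})=\ker\partial$, so $\partial$ annihilates it and the boundary diffeomorphism is isotopic to $\mathrm{id}_{M}$ in $\m(M,D)$. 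The main obstacle is essentially bookkeeping: verifying that the map $\pi_{1}GL(n,\mathbb{R})\to\m(M,D)$ used in the paper agrees with $\partial$ restricted to the $SO(n)$-fiber direction in $\pi_{1}\mathrm{Emb}(D,M)$, which is precisely the ``rotate the disk about its center'' picture matched by the slice-theorem coordinates.
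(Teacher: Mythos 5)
Your proof is correct and is essentially the argument the paper intends: the paper's one-line proof ("an explicit isotopy is given by the $S^1$-action") is precisely the observation that the $S^1$-action yields a loop in $\mathrm{Diff}(M)$ whose restriction to an invariant slice disk realizes the nontrivial class of $\pi_1 SO(n)$, and you have correctly unpacked this via the homotopy exact sequence of the fibration $\mathrm{Diff}(M,D)\to\mathrm{Diff}(M)\to\mathrm{Emb}(D,M)$. The only bookkeeping you flagged—that the map $\pi_1 GL(n,\mathbb R)\to\m(M,D)$ in the paper's sequence agrees with $\partial$ restricted to the fiber $SO(n)$-direction in $\pi_1\mathrm{Emb}(D,M)$—is indeed the standard identification and is unproblematic.
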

\begin{proof}
We take an equivariant neighborhood $D$ of the fixed-point $p$, s.~t.~$D \cong \mathring D^n$. The $S^1$-action induces a map $g \colon (M -D) \times [0,1] \to (M-D) \times [0,1]$, whose restriction on $(M-D) \times \{1\}$ is the identity, and whose restriction on $((M-D) \times \{0\} )\cup (\partial D \times [0,1])$ is the boundary diffeomorphism. After reparametrization of the boundary of $(M-D) \times [0,1]$, $g$ can be viewed as a pseudo-isotopy between $\mathrm{id}$ and the boundary diffeomorphism.
\end{proof}

\begin{lem}\label{lem:IN}
The following diagram is commutative.
\begin{eqnarray}\label{diag:sn}
\xymatrix{
\T(S^{2} \times S^{4}) \ar[r]^{\ \ \ \ \ \Psi} \ar[d]^{S} & \mathcal N \ar[d]^{KS} \\
\z^{3}/L \ar[r]^{=} & \z^{3}/L. }
\end{eqnarray}
\end{lem}
\begin{proof}
Let $V \to B$ be a coboundary of the mapping torus $(S^2 \times S^4)_{f}$ which is used to define the $S$-invariant of $f$. There is a canonical embedding $S^2 \times S^4 \times [0,1/2] \subset (S^2 \times S^4)_{f}$.  Let
$$W=V \cup_{S^{2}\times S^{4} \times [0,1/2]} (S^{2} \times D^{5} \times [0,1/2]),$$
then $\partial W=(S^{2} \times D^{5} )\cup_{f}  (S^{2} \times D^{5})=N$. Analogous to the proof of Proposition \ref{prop:injhomo}, Wall's formula (\cite[Theorem]{Wall2}) guarantees that the signature of $W$ is zero. Note that the mormal map $S^2 \times S^4 \to B$ extends to a normal map $S^2 \times D^5 \to B$ in the natural way. We use $W$ to compute the $KS$-invariant of $N$. The fundamental class of $(W, \partial W)$ equals the sum of the image of the fundamental classes of $(V,\partial V)$ and $(S^2 \times D^5 \times [0,1/2], \partial (S^2 \times D^5 \times [0,1/2]))$ under the inclusion map, but $H^4(S^2 \times D^5 \times [0,1/2], \partial ( S^2 \times D^5 \times [0,1/2]) )=0$. Therefore the characteristic numbers of $W$ equal that of $V$. 
\end{proof}

\subsection{The group $\m(S^2 \times D^4, \partial)$}\label{sec:product} 
Using the standard decomposition $S^{2} \times S^{4}=(S^{2} \times D^{4} )\cup (S^{2} \times D^{4})$ we define a homomorphism $ \m(S^{2} \times D^{4}, \partial) \to \T(S^{2} \times S^{4})$ by extending a diffeomorphism on the first copy of $S^2 \times D^4$, which is the identity near the boundary, to a diffeomorphism of $S^2 \times S^4$ by the identity on the second copy of $S^2 \times D^4$.

\begin{lem}
The homomorphism $ \m(S^{2} \times D^{4}, \partial) \to \T(S^{2} \times S^{4})$ is surjective.
\end{lem}
\begin{proof}
For any diffeomorphism $f$ of $S^{2} \times S^{4}$ which acts trivially on homology, since $\pi_{2}SO(4)=0$, by \cite[Theorem 1(b)]{Hae61}, after an isotopy we may assume that $f$ is the identity on the second copy of $S^{2} \times D^{4}$. 
\end{proof}



Taking an embedded $D^6$ in the interior of $S^2 \times D^4$, we have a homomorphism $\m(D^6, \partial) \to \m(S^2 \times D^4, \partial)$ given by extending a diffeomorphism of $(D^6,\partial)$ to the complement by the identity. The image are called disk supported diffeomorphisms. 
\begin{lem}\label{lem:d6}
The homomorphism $\m(D^6,\partial) \to  \m(S^2 \times D^4, \partial)$ is injective.
\end{lem}
\begin{proof}
Consider the composition 
$$\m(D^6,\partial) \to \m(S^2 \times D^4, \partial) \to \T(S^2 \times S^4).$$
We compute the generalized Kreck-Stolz invariant of disk supported diffeomorphisms in $\T(S^2 \times S^4)$. By diagram \ref{diag:sn} this is equivalent to computing the $KS$-invariant of $(S^2 \times S^5 )\# \Sigma$, where $\Sigma$ is an exotic $7$-sphere corresponding to a given diffeomorphism of $(D^6,\partial)$. Especially, let  $\Sigma$ be the boundary of the $E_8$-plumbing manifold $E_8$,  which is a generator of $\Theta_7$. Then we may take
$$W=((S^2 \times D^6)\natural E_8 )\# (8 \cdot \ov{\hp^2}) \to \cp^{\infty} \times B\mathrm{Spin}$$
as the null $B$-bordism,
where $\natural$ denotes the boundary connected-sum. Then from the equations (\ref{eqn:ks}) one may directly calculate the Kreck-Stolz invariant of $(S^2 \times S^5) \# \Sigma$
$$s_1((S^2 \times S^5) \# \Sigma)=-8, \ \ s_2((S^2 \times S^5) \# \Sigma)=0, \ \ s_3((S^2 \times S^5) \# \Sigma)=0.$$
Therefore the homomorphism $\m(D^6, \partial) \to \T(S^2 \times S^4)$ is injective, and so is the homomorphism $\m(D^6, \partial) \to \m(S^2 \times D^4, \partial)$.
\end{proof}

The next step to the computation of $\m(S^2 \times D^4, \partial)$ is to determine  the cokernel of $\m(D^{6}, \partial) \to \m(S^{2} \times D^{4}, \partial)$, denoted by $\ov{\m}(S^{2} \times D^{4}, \partial)$.   An upper bound of the order of $\ov{\m}(S^{2} \times D^{4}, \partial)$ is obtained as follows.

Let $P_3(S^2)$ be the 3rd stage Postnikov tower of $S^2$,  we define a homomorphism
$$\Phi \colon \omst_7(P_3(S^2)) \to \overline{\m}(S^2 \times D^4, \partial)$$
as follows. The normal $3$-type of $S^2 \times D^4$ is $$B=P_3(S^2)\times B\mathrm{String} \stackrel{p}{\rightarrow} BSO$$
where $p$ is the canonical projection on the second factor.
Take a generator of $H^2(S^2 \times D^4)$, there is a unique (up to  vertical homotopy) normal $3$-smoothing $\ov{\nu} \colon S^2 \times D^4 \to B$. The corresponding $B$-bordism group $\Omega_*(B,p)$ is the string bordism group $\omst_*(P_3(S^2))$. Given $[N^7,g] \in \omst_7(P_3(S^2))$, consider the bordism class $[S^2 \times D^4\times I, \ov{\nu}]+[N,g]$, where $I=[0,1]$. View this as a $B$-bordism between $(S^2 \times D^4,\ov \nu)$ and $(S^2 \times D^4,\ov \nu)$, where the boundaries are identified by the identity. By \cite[p.708]{Kr99} the surgery obstruction to this $B$-bordism being $B$-bordant rel $\partial$ to an $h$-cobordism rel $\partial$ is in $L_7(\z)=0$. Thus we obtain an $h$-cobordism rel $\partial$ between $S^2 \times D^4$ and $S^2 \times D^4$, and hence a diffeomorphism $f \colon (S^2 \times D^4, \partial) \stackrel{\cong}{\rightarrow} (S^2 \times D^4, \partial)$. We define $\Phi([N,g]) = [f] \in \ov{\m}(S^{2} \times D^{4}, \partial)$.

To consider the dependence of this construction on the choices of representatives of a given $B$-bordism class, let $(N,g)$ and $(N' ,g')$ be two such representatives, then the $h$-cobordisms $W$ and $W'$ obtained by surgery are also $B$-bordant, say, through $V^8$ (see Figure \ref{fig:1}).
\begin{figure}
\begin{center}
\mbox{\beginpicture \setcoordinatesystem units <0.018in,0.018in>
\setplotarea x from -50 to 50 , y from -40 to 40 %
\plot -10 30 40 30 10 10 -40 10 -10 30 / \plot -40 -30 10 -30 40 -10 /
\plot -40 10 -40 -30 10 -30 40 -10 40 30 / \plot  10 -30 10 10 /
\setdashes<1.5pt>
\plot -40 -30 -10 -10 -10 30 / \plot -10 -10 40 -10 /
\put {\mbox{$W$}} [cc] <0mm,0mm> at 0 20 %
\put {\mbox{$W'$}} [cc] <0mm,0mm> at 0 -20 %
\put {\mbox{$I$}} [cc] <0mm,0mm> at 15 38 %
\put {\mbox{$I$}} [cc] <0mm,0mm> at 45 10 %
\put {\mbox{$S^2 \times D^4$}} [cc] <0mm,0mm> at 28 20 %
\put {\mbox{$S^2 \times D^4$}} [cc] <0mm,0mm> at 28 -20 %
\put {\mbox{$V=$}} [cc] <0mm,0mm> at -50 0
\endpicture}
\end{center}
\caption{}  \label{fig:1}
\end{figure}
Then the surgery obstruction $\theta(V)$ to $V$ being an $h$-cobordism rel $\partial$ lies in $L_8(\z) \cong \z$. After making a boundary connected sum of $V$ with copies of $E_8$ along $W$, the surgery obstruction vanishes, and $W$ is replaced by $W \sharp \Sigma$ for some $\Sigma \in \Theta_7$. This means that we obtain a pseudo-isotopy between the diffeomorphism produced by $(N,g)$ composed with the disk-supported diffeomorphism corresponding to $\Sigma$ and the diffeomorphism produced by $(N',g')$. This shows that $\Phi$ is well-defined.

\begin{lem}\label{lem:surj}
$\Phi \colon \omst_{7}(P_{3}(S^{2}) )\to \ov{\m}(S^{2} \times D^{4}, \partial)$ is a surjective homomorphism.
\end{lem}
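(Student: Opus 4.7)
The plan has two parts. First, additivity of $\Phi$ is essentially formal from the construction: given classes $[N_1,g_1]$ and $[N_2,g_2]$, stacking the two $B$-bordisms of the form $[(S^2\times D^4)\times I,\bar\nu]+[N_i,g_i]$ produces a $B$-bordism representing $[(S^2\times D^4)\times I,\bar\nu]+[N_1\sqcup N_2,g_1\sqcup g_2]$. After modified surgery in the interior, the resulting $h$-cobordism decomposes as a concatenation of the two individual $h$-cobordisms, so by the $h$-cobordism theorem the associated diffeomorphism is the composition of the two. The $L_8(\z)\cong\z$ indeterminacy of the surgery is exactly absorbed into $\Theta_7$ after passage to $\ov{\m}(S^2\times D^4,\partial)$, so $\Phi$ is a well-defined homomorphism.

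For surjectivity, given $[f]\in\ov{\m}(S^2\times D^4,\partial)$, I would construct an explicit string bordism class as follows. Since $f$ is the identity on $\partial(S^2\times D^4)=S^2\times S^3$, the mapping torus rel boundary
$$T_f\ =\ (S^2\times D^4)\times I\,/\,(x,0)\sim(f(x),1)$$
is a $7$-manifold with $\partial T_f=S^2\times S^3\times S^1$, to which I glue $S^2\times S^3\times D^2$ along the boundary to obtain a closed $7$-manifold $N_f$. The projection $S^2\times D^4\to S^2\hookrightarrow P_3(S^2)$ is $f$-invariant up to homotopy (since $f$ fixes $\partial$ and $P_3(S^2)$ has no relevant higher obstruction), so it descends to $T_f$ and extends canonically across the cap, yielding $g_f\colon N_f\to P_3(S^2)$. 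The product string structure on $(S^2\times D^4)\times I$ descends to $T_f$, and, using that $S^2\times S^3\times D^2$ is string and that the string structure on its boundary $S^2\times S^3\times S^1$ is essentially unique in the $S^1$-direction, it extends to $N_f$; if the naive matching fails I would correct by the unique nontrivial framing change on the $S^1$-factor.

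The last step is to verify $\Phi([N_f,g_f])=[f]$ in $\ov{\m}(S^2\times D^4,\partial)$. The $B$-bordism $[(S^2\times D^4)\times I,\bar\nu]+[N_f,g_f]$ used to compute $\Phi$ admits an interior simplification: the $S^2\times S^3\times D^2$ cap inside $N_f$ can be canceled against the collar of $\partial(S^2\times D^4)\times I$, turning the bordism into $T_f$ regarded as a $B$-bordism from $(S^2\times D^4,\bar\nu)$ to itself whose two boundary parametrizations are $\mathrm{id}$ and $f$. Applying modified surgery yields an $h$-cobordism whose underlying gluing diffeomorphism is $f$ modulo $\Theta_7$, which is precisely the definition of $\Phi([N_f,g_f])$.

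The main obstacle will be the normal-structure bookkeeping in the last two steps: checking that the string structure on $T_f$ really does extend across the $S^2\times S^3\times D^2$ cap (possibly after correction), and carefully tracking the lifts to $B$ through the cancellation that identifies the bordism $[(S^2\times D^4)\times I]+[N_f]$ with $T_f$. Once these identifications are in place, surjectivity onto $\ov{\m}(S^2\times D^4,\partial)$ follows immediately.
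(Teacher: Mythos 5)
Your treatment of the homomorphism part is essentially the paper's argument: stack the two $B$-bordisms, observe that after surgery the resulting $h$-cobordism decomposes as a concatenation, and note that the $L_8(\z)$-indeterminacy is absorbed into $\Theta_7$.

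For surjectivity, however, your construction is genuinely different and has a real gap. You cap the rel-boundary mapping torus $T_f$ with $S^2\times S^3\times D^2$. The paper instead extends $f$ by the identity over the complementary $S^2\times D^4\subset S^2\times S^4$, forms the closed mapping torus $(S^2\times S^4)_f$, and uses a homotopy between the normal $3$-smoothings $\ov\nu$ and $\ov\nu\circ f$ to define the map to $B$. The paper's cap of $T_f$ is thus $(S^2\times D^4)\times S^1$, which carries the correct normal $B$-structure automatically because the $S^1$ factor is the same mapping-torus circle throughout. Your cap fills the $S^1$ factor with a $D^2$, and there the structure does not extend: the mapping-torus construction equips the $S^1$ direction with the Lie-group (non-bounding) framing, which is exactly the spin structure that fails to extend over $D^2$. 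You anticipate this and propose to ``correct by the unique nontrivial framing change on the $S^1$-factor,'' but that change must be made to the $B$-structure on all of $T_f$ (not just on $\partial T_f$), and you then owe a re-verification that the altered class $[N_f,g_f]$ still maps to $[f]$. That verification rests on your ``cancellation of the cap against the collar,'' which as written is not an actual bordism construction: it does not exhibit the required $8$-dimensional $B$-bordism rel $\partial$ from $[S^2\times D^4\times I,\ov\nu]+[N_f,g_f]$ to $S^2\times D^4\times I\cup_f S^2\times D^4\times I$. The paper avoids both problems simultaneously by building that $B$-bordism explicitly from the homotopy of normal smoothings, so no structure-extension or cancellation argument is needed.
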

\begin{proof}
We first show that $\Phi$ is a homomorphism.  We have\begin{eqnarray*}
& & [S^2 \times D^4 \times I, \ov{\nu} ] + [N_1, g_1]+[N_2,g_2] \\
& = & [((S^2 \times D^4 \times [0,1], \ov{\nu} ) +(N_1,g_1)) \cup_{S^2 \times D^4 \times \{1\}} ((S^2 \times D^4 \times [1,2], \ov{\nu} )+ (N_2,g_2))]
\end{eqnarray*}
After surgery we obtain an $h$-cobordism rel $\partial$, which is the union of the $h$-cobordisms obtained from $(N_1,g_2)$ and $(N_2,g_2)$ along $S^2 \times D^4 \times \{1\}$. Therefore the diffeomorphism induced by this $h$-cobordism is the composition of the diffeomorphisms induced by $(N_1,g_1)$ and $(N_2,g_2)$.

To show that $\Phi$ is surjective, consider a given a diffeomorphism $f \colon (S^2 \times D^4, \partial) \to(S^2 \times D^4, \partial)$. The normal $3$-smoothings $\ov{\nu}$ and $\ov{\nu} \circ f$ are homotopic. Choose a homotopy $g \colon S^2 \times D^4 \times I \to B$, extend $g$ to a map $S^2 \times S^4 \times I \to B$, where on the second copy of $S^2 \times D^4 \times I$ we use $\ov{\nu}$. Still call this map $g$. We also extend $f$ to a diffeomorphism of $S^2 \times S^4$, where on the second copy of $S^2 \times D^4$ we use the identity diffeomorphism. Still call this diffeomorphism $f$. Glue the two boundary components of $S^2 \times S^4 \times I$ via $f$, the map $g$ induces a map $\ov{g} \colon (S^2 \times S^4)_f \to B$ which represents an element in $\omst_7(P_3(S^2))$. Then 
$$ (S^2 \times D^4 \times [0,1] \times I) \cup_F (S^2 \times D^4 \times [1,2] \times I) \cup_{\mathrm{id}}(S^2 \times D^4 \times S^1)$$
is a $B$-coboundary between $(S^2 \times D^4 \times I , \ov{\nu}) + ((S^2 \times S^4)_f, \ov g)$ and $(S^2 \times D^4 \times [0,1]) \cup_{f} (S^2 \times D^4 \times [1,2])$ (Figure \ref{fig:4}), where the gluing map $F$ is 
$$f \times \mathrm{id} \colon S^2 \times D^4 \times \{1\} \times [0,1/3] \to S^2 \times D^4 \times \{1\} \times [0,1/3]$$
$$\mathrm{id} \colon S^2 \times D^4 \times \{1\} \times [2/3,1] \to S^2 \times D^4 \times \{1\} \times [2/3,1]$$
 Therefore the image of $[S^2 \times D^4 \times I , \ov{\nu}] + [(S^2 \times S^4)_f, \ov g]$ under $\Phi$ is just $f$. This shows the surjectivity of $\Phi$.
\end{proof}

\begin{figure}
\begin{center}
\mbox{\beginpicture \setcoordinatesystem units <0.018in,0.018in>
\setplotarea x from -52 to 40 , y from -50 to 50 %
\plot -30 40 30 40 30 15 -30 15 -30 40 /
\plot -30 -15 30 -15 30 -40 -30 -40 -30 -15 /
\plot -20 -10 -20 10 / \plot -20 10 -18 8 / \plot -20 10 -22 8 /
\plot 20 -10 20 10 / \plot 20 10 18 8 / \plot 20 10 22 8 /
\setdashes<1.5pt>
\plot -30 -15 -30 15 / \plot -10 -15 -10 15 / \plot 10 -15 10 15 / \plot 30 -15 30 15 /
\put {\mbox{$S^2 \times D^4 \times [1,2] \ \ \ \ \ \ $}} [cc] <0mm,0mm> at -52 25 %
\put {\mbox{$S^2 \times D^4 \times [0,1] \ \ \ \ \ \ $}} [cc] <0mm,0mm> at -52 -25 %
\put {\mbox{$ I$}} [cc] <0mm,0mm> at 0 50  %
\put {\mbox{$ I$}} [cc] <0mm,0mm> at 0 -50 %
\put {\mbox{$f$}} [cc] <0mm,0mm> at -25 0 %
\put {\mbox{$\mathrm{id}$}} [cc] <0mm,0mm> at 25 0
\endpicture}
\end{center}
\caption{}  \label{fig:4}
\end{figure}


An upper bound of the order of $\omst_7(P_3(S^2))$ is obtained by a computation with the Atiyah-Hirzebruch spectral sequence. We put the calculation in the appendix (Lemma \ref{lem:order1}).

\begin{lem}\label{lem:order}
$|\omst_7(P_3(S^2))| \le 24 $.
\end{lem}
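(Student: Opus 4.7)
The plan is to run the Atiyah--Hirzebruch spectral sequence
\[ E^2_{p,q} = H_p(P_3(S^2);\omst_q(\mathrm{pt})) \Longrightarrow \omst_{p+q}(P_3(S^2)), \]
combined with the standard low-dimensional string bordism groups
\[ \omst_0 = \z,\; \omst_1 = \omst_2 = \z/2,\; \omst_3 = \z/24,\; \omst_4 = \omst_5 = 0,\; \omst_6 = \z/2,\; \omst_7 = 0. \]
Since we want only an upper bound, it suffices to bound the orders of the $E^2$-entries on the diagonal $p+q=7$, using $|E^\infty_{p,q}| \le |E^2_{p,q}|$.

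The first step is to compute the low-degree integral homology of $P_3(S^2)$. The Postnikov decomposition presents $P_3(S^2)$ as the total space of a principal $K(\z,3)$-fibration over $\cp^\infty = K(\z,2)$ whose $k$-invariant is the generator $x^2 \in H^4(\cp^\infty;\z)$. Running the cohomological Serre spectral sequence with transgression $\tau(\iota_3) = x^2$ and the standard low-degree integral cohomology of $K(\z,3)$ (namely $H^0 = \z$, $H^3 = \z$, $H^6 = \z/2$, with all other degrees $\le 7$ vanishing), a routine calculation yields $H^*(P_3(S^2);\z)$ equal to $\z, 0, \z, 0, 0, 0, \z/2, 0$ in degrees $0, \ldots, 7$. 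Universal Coefficients then gives $H_0 = \z$, $H_2 = \z$, $H_5 = \z/2$, $H_1 = H_3 = H_4 = H_6 = 0$, and $H_7$ torsion of order dividing $|H^8(P_3(S^2);\z)|$.

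The second step is to bound $|H^8(P_3(S^2);\z)| \le 6$ by the same Serre spectral sequence. The only surviving $E_\infty$-terms in total degree $8$ are $E_\infty^{2,6}$, of order at most $2$ (from $x \cdot \iota_3^2$), and $E_\infty^{0,8}$, of order at most $3$ (coming from the $3$-primary Bockstein class $\beta P^1 \iota_3$ in $H^8(K(\z,3);\z) = \z/3$; the mod-$2$ class $\iota_3 \cdot Sq^2 \iota_3 \in H^8(K(\z,3);\z/2)$ is a Bockstein image and so corresponds to $2$-torsion in $H^9(K(\z,3);\z)$, not $H^8$). This gives $|H_7(P_3(S^2))| \le 6$. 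Finally, on the $p+q=7$ diagonal of the AHSS, the universal coefficient decomposition $H_p(X;A) = (H_p \otimes A) \oplus \mathrm{Tor}(H_{p-1},A)$ makes every entry vanish except
\[ E^2_{5,2} = \z/2, \quad E^2_{6,1} = \mathrm{Tor}(\z/2,\z/2) = \z/2, \quad E^2_{7,0} = H_7 \text{ of order } \le 6, \]
so $|\omst_7(P_3(S^2))| \le 2 \cdot 2 \cdot 6 = 24$.

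The main technical obstacle is the clean bookkeeping of $H^8(P_3(S^2);\z)$: one has to separate the $2$- and $3$-primary parts of $H^*(K(\z,3);\z)$ in degree $8$ via the Bockstein structure and verify that no unexpected classes survive the Serre differentials. Once that is under control, the AHSS estimate on the $p+q=7$ diagonal is immediate, and any actual differentials (e.g.\ $d_2\colon E^2_{7,0} \to E^2_{5,1}$) can only improve the bound.
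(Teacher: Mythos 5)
Your proposal is correct and follows essentially the same route as the paper: both use the Serre spectral sequence of the Postnikov fibration $K(\z,3) \to P_3(S^2) \to K(\z,2)$ to determine the low-degree homology of $P_3(S^2)$, and then apply the Atiyah--Hirzebruch spectral sequence for $\omst_*$ with the trivial upper bound $|\omst_7| \le \prod_{p+q=7}|E^2_{p,q}|$. The only cosmetic differences are that the paper runs the Serre sequence in homology and obtains the exact value $H_7(P_3(S^2)) \cong \z/2\oplus\z/3$, whereas you work in integral cohomology and use universal coefficients to get the bound $|H_7|\le 6$; this saves you the short Bockstein detour you describe, since one can just read $H_7(K(\z,3))=\z/3$, $H_8(K(\z,3))=\z/2$ off Eilenberg--MacLane's tables. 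The paper also records the Steenrod action and the triviality of certain AHSS $d_2$'s, but that information is only needed for the lower bound and for the twisted companion lemma, not for the upper bound proved here.
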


\begin{lem}\label{lem:isom}
$\ov{\m}(S^{2} \times D^{4}, \partial)$ is isomorphic to $\z/12 \oplus \z/2$, detected by $s_{2}/2$ and $s_{3}$.
\end{lem}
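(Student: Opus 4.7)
The plan is to sandwich $\ov{\m}(S^2\times D^4,\partial)$ between an upper bound of order $24$ coming from string bordism and a lower bound of order $24$ coming from the two explicit $S^3$-bundles $N_{-1}$ and $N_{-2}$ constructed in \S\ref{7-mfds}, and then identify the detecting invariants.

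First I would record the upper bound: by Lemma \ref{lem:surj} the homomorphism $\Phi$ is surjective, so Lemma \ref{lem:order} gives $|\ov{\m}(S^2\times D^4,\partial)|\le 24$. Next I would set up the chain of homomorphisms
\[
\ov{\m}(S^2\times D^4,\partial)\xrightarrow{\ \Psi\ }\ov{\mathcal N}\xrightarrow{\ S\ }\z/8\oplus \z/24\oplus \z/2,
\]
where the first arrow factors the surjection $\m(S^2\times D^4,\partial)\twoheadrightarrow S\m(S^2\times S^4)\xrightarrow{\Psi}\mathcal N$ (surjectivity from Lemma \ref{lem:bij} together with the $\pi_2SO(4)=0$ remark) through the quotient by the image of $\m(D^6,\partial)=\Theta_7$, and $S$ is injective by Lemma \ref{lem:inv} combined with the table \eqref{lattice1} identifying $\z^3/L\cong \z/(8\cdot 28)\oplus \z/24\oplus \z/2$ and the computation that $S^2\times S^5\sharp \Sigma$ (where $\Sigma=\partial E_8$) has $S$-invariant $(-8,0,0)$, so quotienting by $\Theta_7$ just collapses $\z/(8\cdot 28)$ to $\z/8$.

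The key computational step is to feed the sphere bundles $N_{-1}$ and $N_{-2}$ into this chain and use formula \eqref{eqn:nk} to produce enough elements. For $k=-1$ one gets $(s_1,s_2,s_3)=(-8,0,1)$, and for $k=-2$ one gets $(-8,2,-1)$; modulo the image of $\Theta_7$ these become $\Psi(a)=(0,0,1)$ and $\Psi(b)=(0,2,1)$ in $\z/8\oplus \z/24\oplus \z/2$. A direct inspection shows $\Psi(a)$ has order $2$, $\Psi(b)$ has order $12$, and $\Psi(a)\ne 6\Psi(b)=(0,12,0)$, so $\Psi(a)$ and $\Psi(a)-\Psi(b)=(0,-2,0)$ generate a subgroup of order $24$ in the target, isomorphic to $\z/2\oplus \z/12$. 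Pulling back through the injection $S$ and the surjection $\Psi$, this forces $\ov{\m}(S^2\times D^4,\partial)$ itself to have order at least $24$, hence equal to $24$, and the map into $\ov{\mathcal N}$ is an isomorphism onto this $\z/2\oplus\z/12$.

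Finally, to identify the detecting invariants, I would observe from the image description above that every element of $\ov{\m}(S^2\times D^4,\partial)$ has $s_1\equiv 0\pmod 8$ and $s_2$ even, so $s_3$ and $s_2/2$ descend to well-defined homomorphisms into $\z/2$ and $\z/12$ respectively; together they realize the isomorphism with $\z/2\oplus \z/12$, as claimed. The only delicate point in this plan is making sure that the identification $\ov{\mathcal N}\hookrightarrow \z/8\oplus \z/24\oplus \z/2$ really comes with the lattice from table \eqref{lattice1} after quotienting by $\Theta_7$, but this follows directly from the compatibility diagram \eqref{diag:sn} and the $E_8$-computation recalled above; no other step involves more than the substitution $k=-1,-2$ into \eqref{eqn:nk}.
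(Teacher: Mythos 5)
Your proposal is correct and follows the same route as the paper: an upper bound $|\ov{\m}(S^{2}\times D^{4},\partial)|\le 24$ from Lemmas \ref{lem:surj} and \ref{lem:order}, combined with the lower bound produced by feeding the sphere bundles $N_{-1}$ and $N_{-2}$ through $S\circ\Psi$ into $\z/8\oplus\z/24\oplus\z/2$ using formula \eqref{eqn:nk}. The paper phrases the lower bound as ``$a$ has order $2$, $b$ has order $12$, and $a\ne 6b$,'' whereas you exhibit the subgroup $\langle\Psi(a),\Psi(a)-\Psi(b)\rangle\cong\z/2\oplus\z/12$ directly; these are equivalent formulations of the same step, and the rest of the argument (the $E_8$ computation giving $s_1=-8$ for the local generator, hence the passage from $\z/(8\cdot 28)$ to $\z/8$ in the target, and the identification of $s_2/2$ and $s_3$ as the detecting invariants) matches the paper.
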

\begin{proof}
From the previous lemma and Lemma \ref{lem:surj} we see that the order of $\ov{\m}(S^2 \times D^4, \partial) \le 24$. Indeed the equality holds. To see this,
denote the quotient of $\mathcal N$ by the action of $\Theta_7$ via connected-sum with homotopy spheres by  $\ov{\mathcal N}$. Then from the proof of Lemma \ref{lem:d6} the $KS$-invariant on $\mathcal N$ induces an injective map $\ov{\mathcal N} \to \z/8 \oplus \z/24 \oplus \z/2$, and the composition of surjective maps $\m(S^2 \times D^4, \partial ) \to \T(S^2 \times S^4) \to \mathcal N$ induces a surjective map $\ov{\m}(S^2 \times D^4, \partial ) \to \ov{\mathcal N}$. Let $a$, $b \in \ov{\m}(S^2 \times D^4,\partial)$ be a preimage of $N_{-1}$ and $N_{-2}$ (defined in end of \S\ref{7-mfds}), respectively, then by Formula \ref{eqn:nk} and Lemma \ref{lem:order}, $a$ has order $2$, $b$ has order $12$ and $a \ne 6b$. This finishes the proof of the lemma.

\end{proof}

To summarize, we have 

\begin{thm}\label{thm:S2D4}
There are isomorphisms
$$\m(S^{2} \times D^{4}, \partial) \stackrel{\cong}{\longrightarrow} \T(S^{2} \times S^{4}) \stackrel{\cong}{\longrightarrow} \z/28 \oplus \z/12 \oplus \z/2$$
where the second isomorphism is given by the invariants $(s_{1}/8, s_{2}/2, s_{3})$.
\end{thm}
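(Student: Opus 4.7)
My plan is to reduce the statement to a counting argument that synthesizes the preceding lemmas. First, from the defining exact sequence $1 \to \m(D^6,\partial) \to \m(S^2\times D^4,\partial) \to \ov{\m}(S^2\times D^4,\partial) \to 1$, together with $\m(D^6,\partial)\cong\Theta_7\cong\z/28$ (via Cerf's pseudo-isotopy theorem and the $h$-cobordism theorem) and the computation $\ov{\m}(S^2\times D^4,\partial)\cong\z/12\oplus\z/2$ from Lemma \ref{lem:isom}, I obtain the upper bound $|\m(S^2\times D^4,\partial)|\le 28\cdot 24 = 672$.

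Next I would examine the composition
\[
\m(S^2\times D^4,\partial) \twoheadrightarrow S\m(S^2\times S^4) \stackrel{S}{\hookrightarrow} \z^3/L \;\cong\; \z/224\oplus\z/24\oplus\z/2,
\]
where the first arrow is the extension-by-identity map (surjective, as observed before the theorem, and legitimate since $S\m(S^2\times S^4)=S\m(S^2\times S^4,D)$ by Lemma \ref{lem:S1action}), and the second is the generalized Kreck-Stolz invariant of Proposition \ref{prop:injhomo} specialized to $M=S^2\times S^4$; here $d=k=0$ kills the generators $e_4,e_5,e_6$ in table \ref{lattice}, so $\bar L$ reduces to the lattice $L$ of \ref{lattice1}. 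The image of the composition contains the subgroup $\{(a,b,c)\in\z/224\oplus\z/24\oplus\z/2\;|\;8\mid a,\ 2\mid b\}\cong\z/28\oplus\z/12\oplus\z/2$: the generator of $\m(D^6,\partial)$ realized as the boundary of the $E_8$-plumbing has $(s_1,s_2,s_3)=(-8,0,0)$ and contributes the $\z/28$ summand; the explicit lifts of the two generators of $\ov{\m}(S^2\times D^4,\partial)$ used in the proof of Lemma \ref{lem:isom} (pre-images of $N_{-1}$ and $N_{-2}$ under $\Psi$) contribute $(s_2,s_3)=(0,1)$ and $(2,-1)$ respectively, by \ref{eqn:nk} and the commutative square \ref{diag:sn}, and these generate the remaining $\z/12\oplus\z/2$.

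Combining the two steps closes the argument: the image of the composition has order at least $672$ while the domain has order at most $672$, so both inequalities are equalities and every arrow in the chain above is a bijection onto its image. This simultaneously yields the isomorphism $\m(S^2\times D^4,\partial)\cong S\m(S^2\times S^4)$ and identifies this common group with $\z/28\oplus\z/12\oplus\z/2$ via $(s_1/8,s_2/2,s_3)$. The principal work has already been discharged in the earlier lemmas---notably the injectivity of $S$ (Proposition \ref{prop:injhomo}), which rests on modified surgery theory and Cerf's theorem, and the structural computation of $\ov{\m}(S^2\times D^4,\partial)$ (Lemma \ref{lem:isom}). What remains is the order comparison together with the bookkeeping to confirm that the two explicit generators realize precisely the index-$16$ subgroup $\z/28\oplus\z/12\oplus\z/2$ inside $\z/224\oplus\z/24\oplus\z/2$.
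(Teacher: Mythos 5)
Your proof is correct and reproduces the argument that the paper leaves implicit between Lemma \ref{lem:isom} and the statement of Theorem \ref{thm:S2D4}: bound $|\m(S^2\times D^4,\partial)|$ above by $28\cdot 24$ using the extension by $\Theta_7$ with quotient $\ov{\m}(S^2\times D^4,\partial)$, then show the composition into $\z^3/L\cong\z/224\oplus\z/24\oplus\z/2$ hits a subgroup of order $672$ generated by $(-8,0,0)$, $(-8,0,1)$, $(-8,2,-1)$, forcing all the maps in the chain to be bijective onto their images and identifying the image as $8\z/224\oplus 2\z/24\oplus\z/2\cong\z/28\oplus\z/12\oplus\z/2$ via $(s_1/8,s_2/2,s_3)$. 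This is the same route as the paper, just spelled out.
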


\begin{rem}\label{rem:sato}
It's shown in \cite[Theorem II]{Sato} that $\T(S^{2} \times S^{4})$ is isomorphic to $FC_{4}^{3} \oplus \Theta_{7}$, where $FC_{4}^{3}$ is the group of isotopy classes of framed $4$-knots in $S^{7}$. By \cite{Haef} there is an exact sequence
$$\pi_{4}SO_{3} \to FC_{4}^{3} \to C_{4}^{3} \to \pi_{3}SO_{3}$$
with $C_{4}^{3} \cong \z/12$. There is a splitting $FC_{4}^{3} \to \pi_{4}(S^{2}) \cong \pi_{4}SO_{3}$ given by the linking invariant of the image of $S^{4} \times \{0\}$ and $S^{4} \times \{x_{0}\}$ in $S^{7}$, where $x_{0} \in S^{2}$ is a fixed point.  Therefore $FC_{4}^{3} \cong \z/2 \oplus \z/12$. In Theorem \ref{thm:S2D4} we get the same result using a different method, and moreover, we have found explicit invariants detecting these summands. We will use these invariants to give explicit generators of $\T(S^{2} \times S^{4})$ in the next subsection.
\end{rem}

\subsection{Generators of $\T(S^2 \times S^4)$}\label{subsec:gen}

First we have  the following observation on the mapping torus of a Dehn twist.

\begin{lem}\label{lem:dehn2}
Let $S \colon S^{3} \times D^{3} \subset M$ be an embedding, $\alpha \in \pi_{3}SO(4)$, $\mathbf{t}_{S, \alpha}$ be the Dehn twist in $S^{3} \times D^{3}$ parametrized by $\alpha$ (see \S \ref{subsec:genrel}). We have an embedding $S^3 \times D^3 \times I \subset M \times S^1$, where $I \subset S^1$ is an embedded interval. Identify $S^3 \times D^3 \times I$ with $S^3 \times D^4$, then the mapping torus of $\mathbf{t}_{S, \alpha}$ is
$$M_{\mathbf{t}_{S, \alpha}}= (M \times S^{1}-S^{3 } \times D^{4} )\cup_{\varphi} (S^{3} \times D^{4})$$
where $\varphi \colon S^{3} \times S^{3} \to S^{3} \times S^{3}$, $(x,y) \mapsto (\alpha(y)x ,y)$. Furthermore, let $E_{\alpha}$ be the $\mathbb R^{4}$-bundle over $S^{4}$ corresponding to $\alpha$, $D(E_{\alpha})$ be the disk bundle, with the standard decomposition $\partial D(E_{\alpha})=(S^{3} \times D^{4}) \cup_{\varphi} (S^{3} \times D^{4})$. Then
$M_{\mathbf{t}_{S, \alpha}} = \partial ((M \times D^{2} )\cup_{S^{3}\times D^{4}} D(E_{\alpha}))$.
\end{lem}
\begin{proof}
The mapping torus of a diffeomorphism $f \colon M \to M$ is $M_f = M \times [0,1]/(x \sim f(x))$. Notice that  a Dehn twist $\mathbf{t}_{S,\alpha}$ fixes points in the complement of $S(S^3 \times D^3)$. Therefore its mapping torus has the described form $M_{\mathbf{t}_{S, \alpha}}= (M \times S^{1}-S^{3 } \times D^{4} )\cup_{\varphi} (S^{3} \times D^{4})$.
\end{proof}

Now let $S \colon S^{3} \times D^{3} \hookrightarrow S^{2}\times S^{4}$ be an embedding such that $S^{3} \times \{0\}$  represents a generator of $\pi_{3}(S^{2} \times S^{4}) \cong \z$, $\mathbf{t}_{S, \alpha} \colon S^2 \times S^4 \to S^2 \times S^4$ be the Dehn twist with parameter $\alpha \in \pi_{3}(SO(4))$. Assume that the corresponding $\mathbb R^{4}$-bundle $E_{\alpha}$ over $S^{4}$ has Euler class $e(E_{\alpha})=\chi$ and spin Pontrjagin class $\frac{1}{2}p_{1}(E_{\alpha})=\chi -2b$ with $\chi$, $b \in \z$.  We are going to compute the $S$-invariant of $\mathbf{t}_{S, \alpha}$ in terms of $\chi$ and $b$.

Let $W=(S^{2} \times S^{4} \times D^{2 })\cup_{S^{3}\times D^{4}} D(E_{\alpha})$ be the coboundary in Lemma \ref{lem:dehn2}, with a normal map to $B= \cp^{\infty} \times K(\z,4) \times B\mathrm{Spin}$, which is the normal map on $S^2 \times S^4 \times D^2$, and on $D(E_{\alpha})$ is induced by the spin structure on $D(E_{\alpha})$ (the map to $\cp^{\infty} \times K(\z,4)$ is trivial). By the Mayer-Vietoris sequence $H_{4}(W)$ is isomorphic to $\z^{3}$, a basis of which is given as follows: 
\begin{enumerate}
\item the base $S^{4}$ of $E_{\alpha}$, denoted by $[S^4_{b}]$; 
\item a class $[Y]$, where $Y=D^{4} \cup_{S^3} C$, where $D^4$ is a fiber of $D(E_{\alpha})$, and $C \subset S^2 \times S^4 \times D^2$ is a submanifold whose boundary is $S^3 \times \{0\}$;  
\item $pt \times S^{4} \times pt \subset S^{2} \times S^{4} \times D^{2}$, denoted by $[S_{0}^{4}]$. 
\end{enumerate}
The intersection form with respect to this basis is represented by the matrix
$$\left ( \begin{array}{ccc}
\chi & 1 & 0\\
1 & 0 & 0\\
0 & 0 & 0 \end{array} \right ).$$
(Note that $C$ can be taken as a submanifold in a neighborhood of $S^2 \times S^4 \times \{t\}$ in $S^2 \times S^4 \times D^2$, where $t \in S^1$. Therefore the self-intersection number of $[Y]$ is zero. The other intersection numbers are obvious.)

From the geometric construction we have 
$$\langle \frac{1}{2}p_{1}(W), [S_{b}^{4}] \rangle =\chi -2b, \ \ \langle \frac{1}{2}p_{1}(W), [S_{0}^{4}] \rangle =0.$$
Denote $\langle \frac{1}{2}p_{1}(W), [Y] \rangle = A$, an integer which doesn't depend on $\alpha \in \pi_3SO(4)$. Then
$\frac{1}{2}p_{1}(W)=(\chi-2b) [S_{b}^{4}]^{*} + A [Y]^*$, where $[S^4_b]^*$, $[Y]^*$ and $[S^4_0]^*$ form the dual basis.

Let $x \in H^{2}(W)$ be a generator, a representing cycle $X$ of $Dx \in H_{6}(W, \partial W)$ is constructed as follows: fix $p \in S^{2}$, consider $p \times S^{4} \times D^{2} \subset S^{2} \times S^{4} \times D^{2}$, which is the Poincar\' e dual of a generator of $H^{2}(S^{2} \times S^{4} \times D^{2})$. We need to extend this to $E_{\alpha}$.  First notice that the intersection of $S^{3} \times D^{4} \subset S^{2} \times S^{4} \times S^{1}$ with
$p \times S^{4} \times D^{2}$ is $S^{1}_{p} \times D^{4} \subset S^{3} \times D^{4}$, where $S^{1}_{p} \subset S^{3}$ is the Hopf fiber over $p \in S^{2}$.  We cone this $S^{1}_{p} \times D^{4}$ in $D^{4} \times D^{4} \subset E_{\alpha}$ and get $D^{2}_{p} \times D^{4} \subset D^{4} \times D^{4}$. The intersection of $D^{2}_{p} \times D^{4}$ with $D^{4} \times S^{3}$ is $D^{2}_{p} \times S^{3}$. The clutching function of the bundle $E_{\alpha}$ is $\varphi_{\alpha} \colon D^{4} \times S^{3} \to D^{4} \times S^{3}$, $(x,y) \mapsto (\alpha(y) \cdot x, y)$. Thus the image $\varphi_{\alpha}(D^{2}_{p} \times S^{3})=\{ (\alpha(y) \cdot x, y)\  | \  x \in D^{2}_{p}, \   y \in S^{3}\}$.
We take the cone of this over the $S^{3}$ factor and get $D^{2} \times D^{4} \subset D^{4} \times D^{4}$. Now let
$$X_{p}=(p \times S^{4} \times D^{2}) \cup_{S^{1}_{p} \times D^{4}} (D^{2}_{p}\times D^{4}) \cup_{\varphi_{\alpha}(D^{2}_{p}\times S^{3})} (D^{2} \times D^{4})$$
Then by the construction $X_{p}$ represents the Poincar\' e dual of $x \in H^{2}(W)$. Now we compute the self-intersection of $X_{p}$. We take another point $p' \in S^{2}$ and obtain $X_{p'}$ by the same procedure. The circles $S^{1}_{p}$ and $S^{1}_{p'}$ form a Hopf link in $S^{3}$ and $D^{2}_{p} \cap D^{2}_{p'} =0 \in D^{4}$. Therefore from the construction $X_{p} \cap X_{p'}=S^{4}_{b}$, the base sphere of the bundle $E_{\alpha}$. 

Let $j^* \colon H^4(W, \partial W) \to H^4(W)$ be the natural homomorphism, $\bar \alpha$, $\bar \beta \in H^{4}(W, \partial W)$ such that $j^*(\bar \alpha) =x^2$, $j^*(\bar \beta)=\frac{1}{2}p_1(\nu W)$. Let $D \colon H^4(W, \partial W) \to H_4(W)$ be the Poincar\' e dual map,  then from the intersection matrix we have 
$$j^*D^{-1}([S^4_b]) = \chi [S^4_b]^* + [Y]^*, \ \ j^*D^{-1}([Y]) = [S^4_b]^*, \ \ j^*D^{-1}([S^4_0]) = 0.$$
Therefore we may take $\bar \beta = -A D^{-1}([S^4_b]) + (2b-(1-A)\chi ) D^{-1}([Y])$. With these considerations we have
$$D(\bar \alpha)=[S^{4}_{b}], \ \ \ D(\bar \beta)=-A [S^4_b] + (2b-(1-A)\chi ) [Y].$$
Now we have 
$$\langle \bar \beta^2, [W, \partial W] \rangle = \langle D(\bar \beta), \beta \rangle = -A((2-A) \chi -4b).$$
Note that this integer is divisible by $8$ for any $\chi, b \in \z$, therefore we must have $A \equiv 2 \pmod 4$. If we reframe the embedding $S \colon S^3 \times D^3 \to S^2 \times S^4$ by some $v \in \pi_3SO(3)$ and denote the new embedding by $S_v$, then we have a new Dehn twist $\mathbf{t}_{S_v, \alpha}$ and a corresponding coboundary $W_v$ of the mapping torus. From the construction of $Y$ we still have $Y \subset W_v$, with 
\begin{equation}\label{eqn:y}
\langle p_1(W_v), [Y] \rangle = \langle p_1(W), [Y] \rangle +\langle p_1(v), [S^4] \rangle,
\end{equation} 
where $p_1(v)$ is the first Pontrjagin class of the vector bundle corresponding to $v$.
Since the map $\pi_3SO(3) \to 4\z$, $v \mapsto \langle p_1(v), [S^4] \rangle$ is an isomorphism, we may choose a $v$ such that $\langle p_1(W_v), [Y] \rangle =0$. We denote this embedding by $Z \colon S^3 \times D^3 \to S^2 \times S^4$, and have
\begin{eqnarray*}
\frac{1}{8}s_{1}& = & \frac{1}{8}\langle \bar \beta^{2}, [W, \partial W] \rangle = 0 \\
\frac{1}{2}s_{2} & = & \frac{1}{2}\langle (\bar \alpha ^{2} + \bar \alpha \bar \beta),[W, \partial W] \rangle =\frac{1}{2}(\chi + \chi  - 2b) = \chi - b \in \z/12 \\
s_{3} & = & \langle \bar \alpha \bar \beta, [W, \partial W] \rangle  = \chi -2b =\chi \in \z/2
\end{eqnarray*}
Thus we have the following  explicit geometric description of $\T(S^{2} \times S^{4})$.
\begin{thm}\label{thm:s2s4}
In the mapping class group $\T(S^{2} \times S^{4}) \cong \z/28 \times \z/12 \times \z/2$, the $\z/28$-factor is generated by the disk supported diffeomorphism $\mathbf{f}_{c}$, the $\z/12$-factor is generated by the Dehn twists $\mathbf{t}_{Z, \rho}$ (with $\chi=0$, $b=1$) and the $\z/2$-factor is generated by the Dehn twist $\mathbf{t}_{Z, \epsilon}$ (with $\chi=1$, $b=1$).
\end{thm}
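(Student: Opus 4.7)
The plan is to read off the theorem from the calculation of $S$-invariants of Dehn twists carried out immediately before the statement, combined with the isomorphism of Theorem \ref{thm:S2D4}. Recall that Theorem \ref{thm:S2D4} provides a concrete isomorphism
$$S\colon S\m(S^2\times S^4)\xrightarrow{\ \cong\ }\z/28\oplus \z/12\oplus\z/2$$
implemented by the invariants $(s_1/8,\,s_2/2,\,s_3)$. The content of the present theorem is then the identification of three explicit diffeomorphisms whose images span this splitting.

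First I would evaluate the $S$-invariant on the two Dehn twists. Substituting $(\chi,b)=(0,1)$ into the formulas $s_1(f_\alpha)/8=0$, $s_2(f_\alpha)/2\equiv \chi-b\pmod{12}$, $s_3(f_\alpha)\equiv \chi\pmod 2$ just derived gives $S(f_0^p)=(0,-1,0)$, while $(\chi,b)=(1,0)$ gives $S(f_0^e)=(0,1,1)$. For $f_c$ I would invoke the computation already done in \S \ref{sec:product}, where the null-bordism $(S^2\times D^6)\natural E_8\sharp 8\ov{\hp^2}$ of $S^2\times S^5\sharp\Sigma$ was shown to yield $s_1=-8$ and $s_2=s_3=0$, so that $S(f_c)=(-1,0,0)$.

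It remains to verify that the three vectors $(-1,0,0)$, $(0,-1,0)$, $(0,1,1)$ generate $\z/28\oplus\z/12\oplus\z/2$, which is immediate: the first coordinate of $S(f_c)$ is a unit mod $28$ so $f_c$ hits the $\z/28$ summand, the second coordinate of $S(f_0^p)$ is a unit mod $12$ so $f_0^p$ hits the $\z/12$ summand independently, and $f_0^e$ then provides the only nonzero $s_3$-component and fills in the $\z/2$ summand. Since $S$ is an isomorphism, the three diffeomorphisms $f_c$, $f_0^p$, $f_0^e$ generate $S\m(S^2\times S^4)$, and the description in the statement follows after the obvious change of basis.

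I expect no serious obstacle here: all the heavy lifting has already been carried out, namely the identification of $S\m(S^2\times S^4)$ with $\z/28\oplus\z/12\oplus\z/2$ via modified surgery and the bound $|\omst_7(P_3(S^2))|\le 24$ from the appendix, together with the explicit coboundary $W=S^2\times S^4\times D^2\cup_{S^3\times D^4}E_\alpha$ of Lemma \ref{lem:dehn2} used to evaluate $S(f_\alpha)$. The present theorem is the geometric repackaging of those calculations into explicit generators, and the only genuinely new point is the elementary verification of the generation assertion above.
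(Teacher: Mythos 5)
Your proof is correct and follows essentially the same route as the paper: both read off the theorem from the $S$-invariant computations carried out immediately before the statement, using the coboundary $W = S^2\times S^4\times D^2 \cup_{S^3\times D^4} E_\alpha$ to evaluate $(s_1/8, s_2/2, s_3)(f_\alpha) = (0, \chi-b, \chi)$, together with the value $(-1,0,0)$ for $f_c$ computed in \S\ref{sec:product}, and then appealing to the isomorphism of Theorem \ref{thm:S2D4}. One small point worth being explicit about: since $S(f_0^e)=(0,1,1)$ has order $12$ rather than $2$, the literal phrase ``the $\z/2$-factor is generated by $f_0^e$'' should be read in the sense you indicate, namely that $f_0^e$ is the unique generator among the three with nontrivial $s_3$, and that $f_0^e\circ f_0^p$ (with $S$-invariant $(0,0,1)$) is the order-two element completing the direct-sum decomposition; your appeal to an ``obvious change of basis'' is the right resolution of this, matching the informality of the paper's own formulation.
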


Note that  Theorem \ref{thm:gen1} for spin manifolds is a corollary of this theorem and the fact that there is a surjective homomorphism $\m(S^2 \times D^4, \partial)\to \m(M)$ (see Lemma \ref{lem:commute}).

\begin{rem}
There is another natural Dehn twist, namely the Dehn twist along the embedded $S^{2} \times D^{4} \subset S^{2} \times S^{4}$ parametrized by the non-trivial element of $\pi_{4}(SO(3)) \cong \z/2$. The extension of this Dehn twist to $S^2 \times S^4$ is precisely the clutching function for the non-trivial $S^2$-bundle over $S^5$. It can be shown (c.~f.~\cite[\S3.2]{CS11}) that the total space of this bundle is diffeomorphic to $SU(3)/S^1$, whose Kreck-Stolz invariant is $(0,0,1) \in \z/28 \times \z/12 \times \z/2$. 
\end{rem}

\section{Determination of $\K(M,D)$}\label{sec:K}
In this section we show that all elements in $\K(M,D)$ are represented by diffeomorphims extended from diffeomorphisms  of $(S^{2} \times D^{4}, \partial)$. Using this fact and the generalized Kreck-Stolz invariant, we are able to compute the group $\K(M, D)$, as given in the tables in Theorem \ref{thm:kernel}. In next section we will show that the homomorphism $\K(M,D) \to \K(M)$ is an isomorphism. 

Recall that for a $6$-manifold $M$ which looks like a complete intersection we have a decomposition (\ref{decomp})
$$M= N \# \, r (S^3 \times S^3)$$
where $r=b_3(M)/2$ and $H_*(N) \cong H_*(\mathbb C \mathrm P^3)$. For such $N$ there is a decomposition $N=(S^{2} \times D^{4}) \cup_{\varphi} (S^{2} \times D^{4})$, with $\varphi \colon S^2 \times S^3 \to S^2 \times S^3$ a diffeomorphism (\cite[Theorem 2]{Wall}). 
Let $M_0=(S^2 \times D^4) \# \, r(S^{3} \times S^{3})$ be the connected-sum of the second copy of $S^2 \times D^4$ with $\# \, r(S^3 \times S^3)$. Given a diffeomorphism of $M_0$ whose restriction on $\partial M_0$ is the identity, we extend it to a diffeomorphism $f$ of $M$ by the identity on the first copy of $S^2 \times D^4$. If $[f]$ is in $\K(M,D)$, then there is a $y' \in H^{4}(M_{f})$ such that $x^{2}-dy'=0$ and $p_{1}(M_{f})-2ky'=0$. Let $i^{*} \colon H^{4}(M_{f}) \to  H^{4}((M_{0})_{f})$ be the homomorphism induced by the inclusion of the mapping torus, then $d i^{*}(y')=i^*(x^2)=0$. This implise $i^*(y')=0$ and $p_{1}((M_{0})_{f})=i^{*}(p_{1}(M_{f}))=0$.

\begin{prop}\label{prop:uk}
Each element in $\K(M,D)$ is represented by a diffeomorphism which is extended from $f \colon M_0 \to M_0$ such that $f_*=\mathrm{id}$ on $\pi_3(M_0)$ and $p_1((M_0)_f)=0$.
\end{prop}

\begin{proof}
Let $f \colon (M,D) \to (M,D)$ be a diffeomorphism such that $[f] \in \K(M,D)$. Since $f$ induces the identity on $H_{2}(M)$, and $\pi_{2}SO(4)=0$, after an isotopy we may assume that $f$ is the identity on the first copy of $S^{2} \times D^{4} \subset M$.  Then we get a diffeomorphism $f$ of $M_0$ which is the identity on the boundary.
By similar argument in \S \ref{sec:diff},
 $$\pi_{3}(M_{0}) \cong \pi_3(S^2 \times D^4) \oplus \pi_3(\# \, r(S^3 \times S^3) )\cong \z \oplus \z^{2r},$$
 where the second isomorphism is given by the standard basis $e_0, e_1, f_1, \cdots e_r , f_r$. The inclusion map $M_0 \to M$ induces a homomorphism
 $$\pi_{3}(M_{0}) \to \pi_{3}(M) \cong \pi_3(N) \oplus \pi_3(\# \, r(S^3 \times S^3))\cong \z/d \oplus \z^{2r},$$ 
which is the mod $d$ reduction on the first summand and the identity on the second. Since $[f]$ is in $\K(M,D)$,  by Lemma \ref{lem:pi3} the action $f_*$  on $\pi_3(M)$ is the identity,  therefore under the standard basis $e_0, e_1, f_1, \cdots e_r , f_r$, the action $f_*$ on  $\pi_{3}(M_0)$ is represented by the matrix
$$ \left ( \begin{array}{cc}
I & 0 \\
B & I \end{array} \right )$$
where $B \colon \pi_3(\# \, r(S^3 \times S^3)) \cong \z^{2r} \to d\z$ is a homomorphism. 

We construct diffeomorphisms $\varphi \colon M_0 \to M_0$ with  $\varphi |_{\partial M_0}=\mathrm{id}$ satisfying the following properties:

\begin{enumerate}
\item The action of $\varphi$ on $\pi_3(M_0)$ has the form  
$$ \left ( \begin{array}{cc}
I & 0 \\
B_1 & I \end{array} \right )$$
where $B_1$ maps a generator $e_i$ or $f_i$ of $\pi_3(\# \, r(S^3 \times S^3))$ to $d e_0$,  and maps other generators to $0$. 

\item We extend $\varphi$ to a diffeomorphism of $M$ by the identity on the complement, then $\varphi$ is isotopic to the $\mathrm{id}_M$. 
\end{enumerate} 
Then by taking composition of $f$ with these diffeomorphisms we get a diffeomorphism which is isotopic to $f$ and satisfies the requirement in the proposition. 

For simplicity, we give the construction of the diffeomorphism $\varphi$ with $B_1(f_1)=d e_0$. The other diffeomorphisms are similarly constructed. Let $S_1, S_2 \colon S^3 \times D^3 \to M_0$ be embeddings such that the core of $S_1$ represents $e_1$, and the core of $S_2$ represents $e_0+e_1$. Choose $\alpha_1, \alpha_2 \in \pi_3SO(4)$ such that $\chi(\alpha_1)=d$, $\chi(\alpha_2)=-d$, $\frac{1}{2}p_1(\alpha_1)=b_1$, $\frac{1}{2}p_1(\alpha_2)=b_2$, where $b_1$ and $b_2$ to be determined later. Let $\varphi =\mathbf{t}_{S_1,-\alpha} \circ \mathbf{t}_{S_2,\alpha}$.  
By Lemma \ref{lem:PL}, $\varphi_{*}(f_1)=d \cdot e_0+f_1$, and $\varphi_{*}$ fixes the other generators of $\pi_{3}(M_{0})$. By Lemma \ref{lem:pi3} $\varphi_*=\mathrm{id}$ on $\pi_3(M)$. Therefore the first component of the invariant $v_{x,p}(\varphi)$ is zero. Notice that the diffeomorphism $\varphi$ fixes embedded spheres $S^3 \subset M$ representing $e_1, \cdots , e_r$ and $f_2, \cdots , f_r$, we have $\langle \frac{1}{2}p_1(M_{\varphi})-2k\bar y, [S^1 \times S^3] \rangle =0$. By Lemma \ref{lem:chi} we may choose an appropriate $b_1$ such that  $\langle \frac{1}{2}p_1(M_{\varphi})-2k\bar y, [S^1 \times S^3]\rangle =0$, where $S^3 \subset M$ represents $f_1$. Therefore $[\varphi] \in \K(M,D)$.

To compute the generalized Kreck-Stolz invariant of $\varphi$, we use the construction given in  Lemma \ref{lem:dehn2}.  We have a coboundary $W= (M \times D^{2}) \cup D(E_{\alpha_1}) \cup D(E_{\alpha_2})$ of the mapping torus of $\varphi$, with a normal map $\bar \nu \colon W \to B$, whose restriction on $D(E_{\alpha_1})$ and $D(E_{\alpha_2})$ maps to $B\mathrm{Spin}$. The signature of $E_{\alpha_1}$ and  $E_{\alpha_2}$ is $1$ and $-1$ respectively, therefore the signature of $W$ is zero.  The homology group $H_4(W)$ is isomorphic to $\z^4$, a basis is: the base spheres of $E_{\alpha_1}$ and $E_{\alpha_2}$, denoted by $S^4_{\alpha_1}$ and $S^4_{\alpha_2}$, respectively, $Y=D^4 \cup C \cup D^4$, where $C$ is a submanifold of $M \times D^2$ such that $\partial C$ is the union of cores of $S_1$ and $S_2$, and a generator $u$ of $H_4(M)$. The intersection matrix with respect to this basis is $$\left ( \begin{array}{cccc}
d & 0 & 1 & 0\\
0 & -d  & -1 & 0\\
1 & -1 & 0 & 0 \\
0 & 0 & 0 & 0
\end{array} \right ).$$

We have $\langle \frac{1}{2}p_1(W), [S^4_{\alpha_1}] \rangle = d+2b_1$,  $\langle \frac{1}{2}p_1(W), [S^4_{\alpha_2}] \rangle = 2b_2-d$, $\langle \frac{1}{2}p_1(W), u \rangle = \langle \frac{1}{2}p_1(M), u \rangle = 2k$. By choosing an appropriate framing of $S_1$, we may assume $\langle \frac{1}{2}p_1(W), [Y] \rangle = A$ with $A=0$ or $1$.  Let $y \in H^4(K(\z,4))$ be the chosen generator, then
$$\langle \bar \nu^*y, [S^4_{\alpha_1}] \rangle = \langle \bar \nu^*y, [S^4_{\alpha_2}] \rangle =0, \ \ \langle \bar \nu^*y, u\rangle=1,$$ 
and by choosing $C$ appropriately we have $\langle \bar \nu^*y, [Y] \rangle =0$.

We take $\bar \alpha = D^{-1}([S_{\alpha_1}]+[S_{\alpha_2}])$ as a preimage of $\alpha=x^2 - d\bar \nu^*y$. If $A=0$, we may take $\bar \beta= 2(d-2b) D^{-1}([Y])$ as a preimage of $\beta=\frac{1}{2}p_1(\nu W)+2k\bar \nu^*y$. Then 
$$\langle \bar \beta^2,[W, \partial W] \rangle = \langle \bar \alpha^2,[W, \partial W] \rangle=\langle \bar \alpha \bar \beta,[W, \partial W] \rangle=0$$
and the generalized Kreck-Stolz invariant is of $\varphi$ is $0$.  If $A=1$,  the existence of $\bar \beta$ implies that $d=2(b_1+b_2)$. A preimage of $\beta$ is  $\bar \beta = d D^{-1} [S^2_{\alpha_1}]+(d-2b_2)D^{-1}[Y]$. Then 
$$\langle \bar \beta^2, [W,\partial W] \rangle = 2d(d-b_2)+(d-2b_2), \ \ \langle \bar \alpha^2, [W,\partial W] \rangle = 0, \ \ \langle \bar \alpha \bar \beta, [W,\partial W] \rangle =d.$$
Choosing $b_2$ such that $\langle \bar \beta^2, [W,\partial W] \rangle$ is divisible by $8$, then $S(\varphi) \in 8\z/(8\cdot 28) \times 2\z/24 \times \z/2$. There exists a diffeomorphism $g$ of $S^2 \times D^4$ realizing the invariant $S(\varphi)$. The composition of $\varphi$ and $g^{-1}$ is the desired diffeomorphism.
\end{proof}

The normal $3$-type of $M_0 = (S^2 \times D^4) \# \, r(S^3 \times S^3)$ is 
$$p \colon P_3(S^2) \times K(\z^{2r},3) \times B\mathrm{String} \to BO$$
where $p$ is the canonical projection on $B\mathrm{String}$. We fix the normal smoothing $\bar \nu \colon M_0 \to B$ given by the basis $e_0, e_1, f_1, \cdots, e_r, f_r$ and a framing of $M_0$. Then analogous to \S \ref{sec:rel}, from a bordism class in $\omst_{7}(P_{3}(S^{2})\times K(\z^{2r}, 3) )$ we may construct a diffeomorphism $f \colon M_0 \to M_0$, unique up to disk-supported diffeomorphisms. Since the diffeomorphism $f$ constructed in this way preserves the normal smoothing $\bar \nu$, the induced action of $f$ on $\pi_3(M_0)$ is trivial,  and $p_1((M_0)_f)=0$. Let $\m_0(M_0, \partial)$ be the mapping class group consisting of diffeomorphisms $f$, which is the identity near $\partial M_0$, whose action on $\pi_3(M_0)$ is trivial, and such that $p_1((M_0)_f)=0$.
Let $\overline \m_0(M_{0}, \partial)$ be the cokernel of $\Theta_{7} \to \m_0(M_{0}, \partial)$.
Similar to Lemma \ref{lem:surj} we have
\begin{lem}\label{lem:gamma-surj}
There is a surjective homomorphism
$$\Phi_{1} \colon \omst_{7}(P_{3}(S^{2})\times K(\z^{2r}, 3) )\to \overline \m_0(M_{0}, \partial)$$
\end{lem}

We extend a diffeomorphism $f$  of $S^2 \times D^4$ with $f|_{\partial }=\mathrm{id}$ to a diffeomorphism of $M_0$ by identity. This gives a homomorphism 
$\ov \m(S^{2} \times D^{4}, \partial)  \to \ov \m_{0}(M_{0}, \partial)$. By the constructions of $\Phi$ and $\Phi_1$ we have

\begin{prop}\label{prop:commU}
There is a commutative diagram
$$\xymatrix{
\omst_{7}(P_{3}(S^{2})) \ar[r]^{\Phi} \ar[d]^{\iota} & \ov \m(S^{2} \times D^{4}, \partial) \ar[d] &    \\
\omst_{7}(P_{3}(S^{2}) \times K(\z^{2r}, 3)) \ar[r]^{\ \ \ \ \ \ \ \ \Phi_{1}} & \ov \m_{0}(M_{0}, \partial)  }$$
in which the horizontal maps are surjective. Where $\iota$ is induced by the inclusion map of the first factor.
\end{prop}

In Lemma \ref{lem:string} we show that  the natural inclusions of both factors induce  an isomorphism
$$\omst_{7}(P_{3}(S^{2})) \oplus \omst_{7}(K(\z^{2r},3)) \to \omst_{7}(P_{3}(S^{2}) \times K(\z^{2r}, 3)) $$
But a diffeomorphism constructed from an element in the direct summand $\omst_{7}(K(\z^{2r},3))$ is the extensions of a diffeomorphism $f$ of $\#_r(S^{3} \times S^{3})$, where the action of $f$ on $\pi_{3}$ is trivial, and the first Pontrjagin class of the mapping torus of $f$ is zero. By  Theorem \ref{thm:kr} $f$ is isotopic to $\mathrm{id}$. Therefore the homomorphism 
$ \ov \m(S^{2} \times D^{4}, \partial) \to \ov \m_{0}(M_{0}, \partial)$ is surjective.
By Proposition \ref{prop:uk} we have 

\begin{lem}\label{lem:mk}
The homomorphism $\m(S^{2} \times D^{4}, \partial) \to \K(M,D)$ is surjective.
\end{lem}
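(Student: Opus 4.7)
My plan is a geometric support-compression argument. Given $[f] \in \K(M, D)$, I will isotope $f$ rel $D$ so that its support lies inside an embedded $S^2 \times D^4 \subset N \subset M$ (where $N$ is the homology $\cp^3$ summand in the decomposition $M = N \sharp g(S^3 \times S^3)$), thereby exhibiting $[f]$ as the image of an element of $\m(S^2 \times D^4, \partial)$.

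First, I use that $\tau(f) = 0$ combined with Lemma \ref{lem:pi3} to conclude $f$ acts trivially on $\pi_3(M)$. In particular, $f$ preserves the homotopy class of each of the $2g$ standard $3$-spheres $S^3_i, (S^3)^j$ inside the $\sharp g(S^3 \times S^3)$-summand that give a symplectic basis of $H_3(M)$. Since $\dim M - \dim S^3 = 3$, we are in Haefliger's metastable range \cite[Theorem 1b]{Hae61}, so homotopy implies isotopy for these embeddings; an isotopy extension argument then replaces $f$ by an isotopic diffeomorphism fixing each of these spheres pointwise.

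Second, I trivialize the normal derivatives, which lie in $\pi_3(SO(3))$ along each fixed sphere, by composing with suitable generalized Dehn twists from \S \ref{subsec:genrel}. The correction twists have computable $\tau$- and $S$-invariants, and any residual discrepancy outside $S^2 \times D^4$ can be absorbed into the image of $\m(S^2 \times D^4, \partial)$ using the explicit generators $f_c, f_0^p, f_0^e$ and their $S$-values from Theorem \ref{thm:s2s4}. After these adjustments and a tubular-neighborhood uniqueness argument, $f$ is the identity on a regular neighborhood of the $2g$ spheres, hence on all of $\sharp g(S^3 \times S^3) - D^6$, so $f$ is supported in the complementary region inside $N$.

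The final step compresses the remaining support into $S^2 \times D^4$. Since $N$ is spin and a homology $\cp^3$, the generator of $H_2(N)$ is represented by an embedded $S^2$ with trivial normal bundle whose tubular neighborhood is a copy of $S^2 \times D^4 \subset N$, and its complement in $N-D$ is a simply-connected cobordism whose structure is controlled by the Wall--Jupp classification. A standard compression via the $h$-cobordism theorem then isotopes $\mathrm{supp}(f)$ into $S^2 \times D^4$. The main obstacle I expect is the normal-data correction in the second step --- specifically, verifying that every configuration of $\pi_3(SO(3))$-twistings along the $2g$ fixed spheres can be compensated by Dehn twists whose product differs from an element of $\m(S^2 \times D^4, \partial)$ only by a further member of the same image --- which I anticipate reduces, via Theorem \ref{thm:s2s4} and the $S$-invariant computations of \S \ref{subsec:gen}, to a finite bookkeeping check.
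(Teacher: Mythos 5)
Your plan is a genuinely different approach from the paper's, but it has two concrete gaps.

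First, you apply Haefliger's theorem to isotope the embedded $3$-spheres inside $M^6$ itself, which is codimension $3$; Haefliger's metastable-range criterion ($2m \ge 3n+4$, here $12 \ge 13$) fails in $M^6$. The paper sidesteps this by applying Haefliger in $M \times [0,1]$ (dimension $7$, where $14 \ge 13$), producing embeddings $(S^3 \times S^1)_i \subset M_f$ rather than an isotopy of $f$ in $M$. Your support-compression step therefore does not get off the ground in the ambient dimension you need.

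Second, and more structurally, your normal-derivative correction step does not close up as claimed. Once $f$ fixes the spheres $S^3_i, (S^3)^j$ pointwise, the residual datum along each is an element of $\pi_3(SO(3))$ (the Kreck $\chi$-type invariant). A Dehn twist along $S^3_i$ does \emph{not} fix a transverse intersecting sphere $(S^3)^i$ pointwise (it moves the fiber disk), so composing $f$ with such twists destroys the pointwise-fixing you already arranged; and moreover, any correcting twist with $\tau \ne 0$ takes you out of $\K(M,D)$. The vanishing $\tau(f)=0$ tells you the normal bundles of the $S^3\times S^1$ in $M_f$ are trivial, not that the normal derivative of $f$ along the spheres vanishes. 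Reducing this to a ``finite bookkeeping check'' is exactly the hard part, and you have not shown it is achievable.

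The paper avoids both issues by never attempting to isotope $f$ directly. It passes to the mapping torus $M_f$, uses $\tau(f)=0$ to show the framed embeddings $(S^3\times S^1)_i$ exist, cuts out a tubular neighborhood of $\sharp g(S^3\times S^3)-\mathrm{int}\,D$ fibered over $S^1$, and invokes the Browder--Levine fibration theorem to show the remaining piece $X$ also fibers over $S^1$ with fiber $N - \mathrm{int}\,D$. This exhibits $M_f \cong M_g$ with $g$ a diffeomorphism of $N-\mathrm{int}\,D$, hence $f$ conjugate to $g$ in $\m(M,D)$; then one shows the conjugating diffeomorphism can be taken to be the identity on $\mathrm{supp}(g) \subset S^2\times D^4$, so $f$ is in fact isotopic to $g$. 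Browder--Levine does all the ``compression'' work at once and replaces the normal-derivative bookkeeping entirely.
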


\begin{lem}\label{lem:commute}
There is a  commutative diagram
$$\xymatrix{
\m(S^2 \times D^4, \partial)  \ar[d] \ar[r]^{\ \ \ \ S} & \z^3/L \ar[d] \\
\K(M,D) \ar[r]^{S} & \z^3/\bar L(M) }$$
\end{lem}
\begin{proof}
Let $f$ be a diffeomorphism of $S^{2} \times D^{4}$ with $f|_{\partial} = \mathrm{id}$, extend it to a diffeomorphism $f$ of $S^2 \times S^4$ by the identity, let 
$$N = (S^2 \times D^5) \cup_f (S^2 \times S^4 \times I )\cup_{\mathrm{id}}(S^2 \times D^5)$$ 
be the corresponding $7$-manifold and $W \to \cp^{\infty} \times B\mathrm{Spin}$ be the null-bordism for computing the $KS$-invariant of $N$. Recall that $M=(S^{2} \times D^{4}) \cup_{\varphi}M_0$,
where $M_0=( S^{2} \times D^{4})\# \#_r(S^3 \times S^3)$ and $\varphi \colon S^{2} \times S^{3} \to S^{2} \times S^{3}$ is a diffeomorphism.
 Extend $f$ to a diffeomorphism of $M$ by the identity on $M_0$. Define $V=W \cup_{\varphi \times \mathrm{id}} (M_0 \times I^2)$, where $\varphi \times \mathrm{id}$ is a diffeomorphism from  $S^2 \times S^3 \times I^2 \subset \partial (M_0 \times I^2)$ to  
$S^2 \times S^3 \times I^2 \subset S^2 \times S^4 \times I \subset N$, where $S^3 \times I$ is a neighborhood of the equator $S^3$ of $S^4$. Then $\partial V=M_f$. 
The normal smoothings of $W$ and $M_0 \times I^2$ fit together giving a normal smoothing of $V$. In  Wall's formula of the non-additivity of the signature the correcting term vanishes, therefore $\mathrm{sign}(V)=0$. Therefore we may use $V$ as the null $B$-bordism for computing the generalized Kreck-Stolz invariant of $f$. Similar to the proof of Lemma \ref{lem:IN}, the $M_0 \times I^2$ part doesn't contribute to the characteristic numbers. This proves the commutativity of the diagram.
\end{proof}

\begin{proof}[Proof of Theorem \ref {thm:kernel}
 (spin case)]
In the next section we show that the forgetful homomorphism $\K(M,D) \to \K(M)$ is an isomorphism (Theorem \ref{prop:forgetful}). Therefore it suffices to compute the group $\K(M,D)$.
By Theorem \ref{thm:S2D4} and Lemma \ref{lem:commute}, $\K(M,D)$ is isomorphic to the image of the homomorphism
$$\z/28 \oplus \z/12 \oplus \z/2 \to \z^{3}/ \bar L(M)$$
which is isomorphic to $\z^3 / L(M)$, where $ L(M)$ is the lattice generated by the column vectors of the following table
\begin{equation}\label{eq:table}
\begin{array}{c|cccccc}
& e_1 & e_2 & e_3 & e_4 & e_5 & e_6 \\
\hline
s_{1}/8 & -28 & 0 & 0 & (k^2-k)/2 & k^2 & 0 \\
s_{2}/2& 0 & 12 & 0 & (d^2-d(2k-1))/2 & d^{2}-2dk & k-d \\
s_{3} & 0 & 0 & 2 & d(2k-1) & 4dk & -2k
\end{array}
\end{equation}

The finite abelian group $\z^3/ L(M)$ can have only $2$-, $3$- and $7$-torsion, of exponents at most $2$, $1$, and $1$ respectively. Therefore we may determine the structure of $\z^3/L(M)$ by tensorizing it with $\z/4$, $\z/3$ and $\z/7$ respectively. After solving this elementary arithmetic problem we get the tables in Theorem \ref{thm:kernel}.

\begin{enumerate}
\item modulo $3$: only the image of $s_{2}/2$ is involved. Since
$$\gcd((d^{2}-d(2k-1))/2, d^{2}-2dk, k-d)$$ is divisible by $3$ if and only if $d \equiv 0 \pmod 3$, we have
$$
\z^{3} /L(M) \otimes \z/3 = \left \{ \begin{array}{cl}
\z/3 & \mathrm{if \ }d \equiv 0 \pmod 3 \\
0 & \mathrm{if \ }d \not\equiv 0 \pmod 3
\end{array} \right. $$

\item modulo $7$: only the image of $s_{1}/8$ is involved. Since $\gcd((k^{2}-k)/2, k^{2})$ is divisible by $7$ if and only if $d-l \equiv 0 \pmod 7$, we have
$$
\z^{3} /L(M) \otimes \z/7 = \left \{ \begin{array}{cl}
\z/7 & \mathrm{if \ }d -l\equiv 0 \pmod 7 \\
0 & \mathrm{if \ }d-l \not\equiv 0 \pmod 7
\end{array} \right. $$

\item modulo $4$: the entries in table (\ref{eq:table}) are $8$-periodic with respect to $d$. Therefore we only need to compute the lattice for $d=0, \cdots, 7$. The results are in the table in Theorem \ref{thm:kernel}.
\end{enumerate}

The group $\K(M)$ is generated by the Dehn twists $\mathbf{f}_c$, $\mathbf{u}=\mathbf{t}_{Z,\rho}$ and $\mathbf{z}=\mathbf{t}_{Z,\epsilon}$, with 
\begin{eqnarray*}
(\frac{1}{8}s_1(\mathbf{f}_c), \frac{1}{2}s_2(\mathbf{f}_c), s_3(\mathbf{f}_c)) & = & (1,0,0) \\
 (\frac{1}{8}s_1(\mathbf{u}), \frac{1}{2}s_2(\mathbf{u}),s_3(\mathbf{u})) & = & (0,1,0) \\
 (\frac{1}{8}s_1(\mathbf{z}), \frac{1}{2}s_2(\mathbf{z}),s_3(\mathbf{z}))& = & (0,0,1)
\end{eqnarray*}
From the simplification of the lattice one deduces the orders of $\mathbf{f}_c$, $\mathbf{u}$ and $\mathbf{z}$ are $2^p \cdot 7^{1-c}$, $2^q \cdot 3^{1-b}$ and $2^r$, respectively,  where  
$$p = \left \{ \begin{array}{cl}
0 & \mathrm{otherwise} \\
1 & \textrm{$d \equiv 4 \pmod 8$ and $l$ even, or  $d \equiv 0 \pmod 8$ and $l \equiv 2 \pmod 4$} \\
2 &  \textrm{$d \equiv 0 \pmod 8$ and $l \equiv 0 \pmod 4$} \end{array} \right. $$
$$q = \left \{ \begin{array}{cl}
1 & \mathrm{otherwise} \\
2 &  \textrm{$l$ even, or $d \equiv 2 \pmod 4$ and $l$ odd} \end{array} \right.  \ \ \ \ r = \left \{ \begin{array}{cl}
0 & \textrm{$d$ odd} \\
1 & \textrm{$d$ even} \\
\end{array} \right. $$
$$b = \left \{ \begin{array}{cl}
1 & d \equiv 0 \pmod 3 \\
0 & \mathrm{otherwise} \end{array} \right. \ \ \ c = \left \{ \begin{array}{cl}
1 & d- l \equiv 0 \pmod 7 \\
0 & \mathrm{otherwise} \end{array} \right.$$

\end{proof}

\begin{proof}[Proof of Theorem \ref{thm:pres}]
It is shown in \cite[Theorem 2]{Kry02} that $[\mathbf{a}_i, \mathbf{b}_i]=\mathbf{f}_c$. The orders of $\mathbf{f}_c$, $\mathbf{u}$ and $\mathbf{z}$ are just given above. 
\end{proof}

\section{$\K(M,D)$ is isomorphic to $\K(M)$}\label{sec:bd}
In this section we prove that in the spin case the forgetful homomorphism $\K(M,D) \to \K(M)$ is an isomorphism. Recall that the relative mapping class group $\K(M,D)$ is related to the mapping class group $\K(M)$ by the exact sequence (\ref{seq:br1})
$$\pi_1(GL_6(\mathbb R)) \to \K(M,D) \to \K(M) \to 1.$$
{The image of the generator of $\pi_1(GL_6(\mathbb R))$ in $\K(M,D)$ is called the boundary mapping class, represented by the so-called boundary diffeomorphism, described under the exact sequence (\ref{seq:br1}). The forgetful map $\K(M,D) \to \K(M)$ is an isomorphism if and only if the boundary mapping class is the trivial element in $\K(M,D)$.

The mapping torus of the boundary diffeomorphism of $M$ is diffeomorphic to $S^1 \times M$, but the preferred spin structure is the twisted one when restricted to $S^1 \times D$. Although we know that the normal map $S^{1} \times M \to \cp^{\infty} \times K(\z,4) \times B\mathrm{Spin}$ is a boundary in the corresponding spin bordism group, we don't know of an explicit construction for such a spin coboundary. Instead we will make use of a $\mathrm{spin}^c$-coboundary $V$ constructed as follows: Let $E$ be the non-trivial linear $D^6$-bundle over $S^2$, i.~e.~
$$E= (D^6 \times D^2)_{1} \cup_g (D^6 \times D^2)_{2}$$
where $g \colon D^6 \times S^1 \to D^6 \times S^1$ is given by $(x,t) \mapsto ( \alpha (t) \cdot x, t)$, with $\alpha \colon S^1 \to SO(6)$ representing the non-trivial element of the fundamental group. Let $S^{5} \times D^{2} \subset (D^{6} \times D^{2})_{1} $ be the standard inclusion. On the other hand, a tubular neighborhood of $S^{5}=\partial D \subset \{s_0\} \times M \subset S^{1} \times M$  is canonically identified with $S^{5} \times D^{2}$, define
\begin{equation}\label{eqn:V}
V=(M \times D^{2}) \cup_{S^5 \times D^2} E,
\end{equation}
then $\partial V= (M \times S^{1} - S^{5} \times D^{2}) \cup_{g} S^{5} \times D^{2}$ is diffeomorphic to $S^{1} \times M$. Let $M \times D^2 \to \cp^{\infty}$ be the classifying map of a generator of $H^2(M \times D^2)$, we extend it to $V \to \cp^{\infty}$ where the map on $E$ is a constant map.  Let $L$ be the complex line bundle over $V$ with $c_1(L)=(0,z) \in H^2(V) = H^2(M) \oplus H^2(E)$. Then the spin-structure on $\nu V \oplus L$ restricts to the non-trivial  spin-structure on $S^1\times D$. 

\begin{thm}\label{thm:bd1}
Let $M$ be a simply-connected spin $6$-manifold with $H_*(M) \cong H_*(\mathbb C \mathrm P^{3})$, $D \subset M$ be an embedded $6$-disk, $N=M - \mathring D$, $f \colon N \to N$ be the restriction of the boundary diffeomorphism defined using $D$.  Then $f$ is isotopic to the identity rel $\partial N$.
\end{thm}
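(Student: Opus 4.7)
By Proposition \ref{prop:injhomo}, the generalized Kreck-Stolz invariant $S \colon \K(M,D) \to \z^3/\bar L(M)$ is injective, so it suffices to show $S(f) = 0$. The plan is to compute $S(f)$ using the explicit coboundary $V$ constructed in equation (\ref{eqn:V}) and to exploit the key structural fact noted just before the theorem: the intersection pairing $H_4(V) \times H_4(V) \to \z$ is trivial.

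First I would set up the relevant classes on $(V, \partial V)$. The class $\bar{x} \in H^2(V)$ extends $x$ via the projection $V \to M$ coming from the $M \times D^2$ factor, and a lift $\bar{y} \in H^4(V)$ of the distinguished generator $y$ is chosen so that $\bar{x}^2 - d\bar{y}$ restricts to zero on $\partial V = M_f$ (a Mayer-Vietoris computation shows that $H^4(V) \cong H^4(M)$, so this equation actually holds in $H^4(V)$ on the nose). Since $V$ is only $\mathrm{spin}^c$ with $\nu V \oplus L$ spin, the role of $\tfrac{p_1}{2}$ is played by $\tfrac{1}{2}p_1(\nu V \oplus L) = \tfrac{1}{2}(-p_1(V) + c_1(L)^2) \in H^4(V;\z)$. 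Under the identification of the twisted spin structure on $S^1 \times D$ with the one induced from $L$, this class restricts to $\tfrac{p_1}{2}$ of $M_f$ with its preferred spin structure, so one can form relative lifts $\bar\alpha, \bar\beta \in H^4(V, \partial V)$ of $\bar{x}^2 - d\bar{y}$ and $\tfrac{1}{2}p_1(\nu V \oplus L) - k\bar{y}$, and define $s_1, s_2, s_3$ by the same formulas as in Section \ref{sec:gks}.

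The heart of the argument is the observation that each $s_i$ is a cup product of two classes in $H^4(V, \partial V)$ evaluated against $[V, \partial V]$. Under the Poincar\'e-Lefschetz duality isomorphism $H^4(V, \partial V) \cong H_4(V)$, this cup-product pairing corresponds to the intersection form on $H_4(V)$, which is trivial by construction of $V$. Hence $s_1 = s_2 = s_3 = 0$, and therefore $S(f) = 0 \in \z^3/\bar L(M)$.

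The main obstacle I foresee is the $\mathrm{spin}$ versus $\mathrm{spin}^c$ discrepancy: the definition of $S$ in Section \ref{sec:gks} uses a spin coboundary, but $V$ is only $\mathrm{spin}^c$. I expect this can be resolved either by performing surgery on $V$ along a submanifold Poincar\'e dual to $c_1(L)$ to kill the obstruction $w_2(V) = c_1(L) \bmod 2$ while preserving the triviality of the intersection form, or by checking directly that the modified formulas above still represent $S(f) \in \z^3/\bar L(M)$ because the indeterminacy introduced by switching from spin to $\mathrm{spin}^c$ coboundaries is already absorbed into $\bar L(M)$. In either case, once this technicality is handled, the cup product argument forces $S(f) = 0$, and injectivity of $S$ then yields that $f$ is isotopic to the identity rel $\partial N$.
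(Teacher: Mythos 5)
Your overall strategy — show $S(f) = 0$ and invoke injectivity of $S$ — matches the paper's, and you correctly spot that the intersection form on $H_4(V)$ is trivial (the paper states this too). But there is a genuine gap in the step where you try to compute $S(f)$ directly from the $\mathrm{spin}^c$-coboundary $V$. The invariant $S(f)$ of Section~\ref{sec:gks} is defined via a normal $B$-bordism $W \to B = \cp^{\infty} \times K(\z,4) \times B\mathrm{Spin}$, i.~e.\ a \emph{spin} coboundary, and the ambiguity lattice $\bar L(M)$ is the lattice of characteristic numbers of closed signature-zero \emph{spin} $B$-manifolds. The manifold $V$ of equation~(\ref{eqn:V}) is only $\mathrm{spin}^c$, so it carries no map to $B\mathrm{Spin}$, the class $\tfrac{p_1}{2}(V)$ does not exist, and $V$ simply is not an admissible coboundary for computing $S(f)$. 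The paper flags this itself: ``\ldots there isn't an explicit construction for such a spin coboundary. \emph{Instead} we will make use of a $\mathrm{spin}^c$-coboundary $V$\ldots''.

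Neither of your proposed repairs closes the gap. Replacing $\tfrac{p_1}{2}$ by $\tfrac12 p_1(\nu V \oplus L)$ produces characteristic numbers that are only well-defined modulo the lattice of closed signature-zero $\mathrm{spin}^c$ $8$-manifolds; this lattice $\bar L'(M)$ a priori strictly contains $\bar L(M)$, so vanishing in $\z^3/\bar L'(M)$ does not imply vanishing in $\z^3/\bar L(M)$ — exactly the opposite of ``the indeterminacy is absorbed into $\bar L(M)$''. And a surgery on $V$ intended to kill $w_2$ rel boundary cannot be performed in a controlled way: the boundary spin structure on $M_f$ is fixed, and modifying the interior by surgeries on $2$-spheres to change $w_2$ while preserving the trivial intersection form on $H_4$ is not something one can arrange by hand here.

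The paper's actual proof sidesteps the problem entirely: it never tries to compute $S(f)$ from $V$. Instead, it performs two surgeries on the mapping torus $S^1 \times M$ to produce a simply-connected spin homology $S^2 \times S^5$, call it $Y$, and then uses the \emph{absolute} Kreck--Stolz classification of manifolds in $\mathcal N$. The absolute Kreck--Stolz invariants of a $7$-manifold $Y \in \mathcal N$ do admit a $\mathrm{spin}^c$-coboundary formula (via \cite[(2.4),(2.7)]{KrSt2}), and the $\mathrm{spin}^c$-coboundary used there is $W = W' \cup V$, where $W'$ is the trace of the surgeries. After a case-by-case analysis on $d$ and $l$ (changing framings to adjust $\lambda$ and $\mu$), one shows $Y \cong S^2 \times S^5$, or, when $Y \cong S^2 \times S^5 \sharp \Sigma$, that the corresponding local diffeomorphism is trivial in $\K(M,D)$ by the table in Theorem~\ref{thm:table}. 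Your approach does work verbatim in the non-spin case (Section~\ref{subsec:bdns}), because there the normal type is already twisted and $V$ with its $\mathrm{spin}^c$ structure is a legitimate coboundary in the sense of the normal $3$-type; but Theorem~\ref{thm:bd1} is precisely the spin case, where the shortcut fails.
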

\begin{proof}
The idea of the proof is the following (compare the proof of \cite[Proposition 3]{Kr78}):
Do surgery on $S ^{1} \times D \subset S^{1} \times M$ with the twisted framing we get $X=(N \times I) \cup_{\partial}(N \times I)$, where the gluing diffeomorphism is $f \colon N \times \{1\} \to N \times \{1\}$ and $\mathrm{id} \colon (\partial N \times I) \cup (N \times \{0\}) \to (\partial N \times I) \cup (N \times \{0\})$.  If $X$ bounds a compact manifold $U$ such that $(U, N \times I)$ is a relative $h$-cobordism, then $f$ is isotopic to $\mathrm{id}_{N}$ rel $\partial N$. Now $H_{3}(X) \cong \mathbb Z$ and the homology classes are spherical. Choose an embedding $S^{3} \times D^{4} \subset X$ representing a generator of $H_{3}(X)$ and do surgery, we get a simply-connected spin $7$-manifold $Y$, depending on the framing of the $3$-surgery, with $H_*(Y) \cong H_*(S^{2} \times S^{5})$. If $Y$ is diffeomorphic to $S^{2} \times S^{5}$, then we may take $U$ to be the union of the trace of the second surgery and $S^{2} \times D^{6}$ along $Y$. 

In the following we show that for some framing of the $3$-surgery, the Kreck-Stolz invariant $KS(Y)=0$, which implies that  $Y$ is diffeomorphic to $S^2 \times S^5$. This is equivalent to showing that the invariants $S_i(Y)=0$ ($i=1,2,3$) (see Remark \ref{rem:Ss}). These invariants can be computed from a $\mathrm{spin}^{c}$-coboundary of the map $Y \to \cp^{\infty}$ representing a generator of $H^{2}(Y)$ (c.~f.~\cite[(2.7)]{KrSt2}). We construct such a coboundary as follows: Let $S^1 \times M \to \cp^{\infty}$ be a map representing a generator of $H^2(S^1 \times M)$,  this map extends to the trace $W'$ of the above surgeries where on the attached  $2$-handle and $4$-handle it is a constant map. There is a unique spin structure on $W'$, which restricts to the non-trivial spin structure on $S^{1} \times M$.  Take the $\mathrm{spin}^c$-coboundary $V$ described above and let $W=W' \cup V$. Then $W \to \cp^{\infty}$ is a $\mathrm{spin}^{c}$-coboundary of $Y \to \cp^{\infty}$.

Now we have $H_2(W) \cong \z^3$, with a basis $\{ [S^2_M], [S^2_b], [S^2_0]\}$, where $S^2_M \subset M$ represents a generator of $H_2(M)$, $S^2_b$ is the zero section of $E$, and $S^2_0$ is the union of the core of the $2$-handle and $D^2 \times \{*\} \subset D^2 \times M$. From the construction we have 
$$\langle w_2(W), [S^2_M] \rangle =0,  \ \ \langle w_2(W), [S^2_b] \rangle =1,  \ \ \langle w_2(W), [S^2_0] \rangle =1.$$
Therefore $z  : =[S^2_b]^*+[S^2_0]^* \equiv w_2(W) \pmod 2$.  Let $x \in H^2(W)$ be the pull back of a generator of $H^2(\cp^{\infty})$, then 
$$\langle x, [S^2_M] \rangle =1, \ \ \langle x, [S^2_b] \rangle =0, \ \ \langle x, [S^2_0] \rangle =0,$$
therefore $x=[S^2_M]^*$.

Now $H_{4}(W) \cong \mathbb Z^{2}$ with a basis $\{ u, v\}$, where $u=i_*(u')$ is  the image of a generator $u'$ of $H_{4}(M)$ under the induced map $H_4(M) \to H_4(W)$, $v$ is represented by a cycle $C$ such that $C \cap V = D^{2} \times S^{2}_M \subset D^{2} \times M$. By standard algebraic topology we have 
\begin{equation}\label{eqn:h4}
\langle z^2, u \rangle = \langle z^2 ,v \rangle =\langle zx ,u \rangle=\langle zx ,v \rangle=0, \langle x^{2}, u \rangle =d, \ \langle x^{2}, v\rangle =0.
\end{equation}
(For example, $\langle ([S^2_b]^*)^2,u \rangle = \langle (i^*[S^2_b]^*)^2, u' \rangle$, but $i^*[S^2_b]^*=0 \in H^2(M)$ since $\langle i^*[S^2_b]^*, [S^2_M] \rangle=0$. From this and  similar computations we get above identities.)

The intersection pairings of $H_4(W)$ are
\begin{equation}\label{eqn:h42}
u \cdot u =0, \ u \cdot v =1, \ v \cdot v = \lambda,
\end{equation}
and the evaluations of the Pontrjagin class are 
\begin{equation}\label{eqn:h43}
\langle \frac{p_{1}}{2}(W), u \rangle =2k, \ \langle \frac{p_{1}}{2}(W), v \rangle= \mu,
\end{equation}
where the integers $\lambda$ and $\mu$  depend on the framing of the $3$-surgery. 

The $S_i$-invariants of $Y$ are
$$S_{i}(Y)=\langle e^{z/2}ch(E_{i}) \hat A(W), [W, \partial W]\rangle \in \mathbb Q /\mathbb Z \  \ (i=1,2,3)$$
where $E_{i}$ is a (virtual) complex vector bundle pulled back from a vector bundle over $\cp^{\infty}$. Since $z^2=0$,  we have
$S_{i}(Y)=\langle ch(E_{i}) \hat A(W), [W, \partial W]\rangle$, which
is a linear combination of 
$$\langle \bar \alpha^2, [W,\partial W] \rangle, \ \ \langle \bar \alpha \bar \beta, [W, \partial W]\rangle, \ \ \langle \bar \beta^2, [W, \partial W]\rangle,$$  
where $\bar \alpha, \bar \beta \in H^4(W, \partial W)$ is a preimage of $x^2$ and $\frac{p_1}{2}(W)$, respectively.
 From Equations (\ref{eqn:h4}), (\ref{eqn:h42}) and (\ref{eqn:h43}) we have
\begin{eqnarray*}
\langle \bar \alpha^2, [W, \partial W]\rangle & = & -d^2 \lambda \\
\langle \bar \alpha  \bar \beta, [W, \partial W] \rangle & = &d (\mu-2k \lambda)\\ 
\langle (\bar \beta^2, [W,\partial W] \rangle & = & 2k(\mu -2 \lambda k) + 2k \mu.
\end{eqnarray*}
By the formulas given in \cite[(2.4)]{KrSt2} the invariants of $Y$ are
\begin{eqnarray*}
S_1(Y) & = & \frac{1}{2^5 \cdot 7}(2k(\mu - 2 \lambda k) +2k \mu) \\
S_2(Y) & = & \frac{-1}{2^3 \cdot 3} (d^2 \lambda + d(\mu -2\lambda k)) \\
S_3(Y) & = & \frac{-1}{2 \cdot 3}(4d^2 \lambda + d(\mu -2\lambda k)).
\end{eqnarray*}

We analyse these values case by case. Since the boundary mapping class is an element of order $2$, we only need to consider the $2$-primary part. When $d$ is odd, notice that $S_2(Y)$ takes values in $\z/12$, we must have $\lambda + \mu \equiv 0 \pmod 2$. Chang the framing of the embedding $S^{3} \times D^{4} \subset X$ on which we do surgery, using $\gamma \in \pi_3SO(4)$, we get a different $W$, where $\mu$ will chang to $\mu + b$ and $\lambda$ will change to $\lambda + 2a+b$, where $\frac{1}{2}p_1(\gamma)=b$ and $\chi(\gamma)=2a+b$, $a$ and $b$ are arbitrary integers. Take $b=-\mu$ and $a=(\mu - \lambda)/2$, we get $\lambda=\mu =0$ and hence all the $S_i(Y)$'s are $0$.

When $d \equiv 0 \pmod 4$, $S_3(Y)$ is automatically $0$. We may choose $a$ and $b$ such that $\mu =0$, $\lambda =0$ or $1$. Under this choice $S_2(Y)=0$, $S_1(Y)=0$ or $k^2/2^3 \cdot 7$. In the latter case, when $l$ is even, one has $k^2 \equiv 0 \pmod{2^3}$, hence $S_1(Y)=0$; when $l$ is odd, from the simplification of the lattice $L(M)$ in the proof of Theorem \ref{thm:kernel}, one sees that disk supported diffeomorphisms are all isotopic to the identity. Therefore the boundary diffeomorphism is trivial in $\K(M,D)$.

When $d =4r+2$, $S_3(Y)$ is again automatically $0$. When $\lambda$ is odd we take $b=\mu+2$, when $\lambda$ is even we take $b=\mu$. This will make $S_2(Y)=0$, and $S_1(Y)=(2k(\mu+1)-\lambda k^2)/2^3 \cdot 7$ when $\lambda$ is odd, or $S_{1}(Y)=(2k\mu - \lambda k^{2})/2^{3} \cdot 7$ when $\lambda$ is even. Choosing appropriate $a$ such that $\lambda=0$ or $1$ according to the parity of $\lambda$ and $\mu$, we will get $S_{1}(Y)=0$. For example when $\lambda$ is odd and $\mu$ is even
$$2k(\mu+1)-\lambda k^2 \equiv (2k-k^2) \equiv 2^2 (2r+3l)(2r+3l+1) \equiv 0 \pmod{2^3}.$$
Therefore $S_1(Y)=0$. The other cases are similar.
\end{proof}

In the above theorem we have shown that for manifolds $M$ which look like $3$-dimensional complete intersections with $b_3(M)=0$, the boundary mapping class is trivial in the relative mappping class group. To extend this result to general case, we need the following lemma and proposition.

\begin{lem}
Let $M$ be a closed $n$-manifold ($n \ge 3$), $D, D' \subset M$ be closed disjointly embedded  $n$-disks. Let $N=M -\mathring D$, $N'=N - \mathring D'$; $f$ be the boundary diffeomorphism  in a neighborhood of $\partial D$,  $f'$ be the boundary diffeomorphism in a neighborhood of $\partial D'$. Assume $f$ is isotopic to $\mathrm{id}_{N}$ rel $\partial N$. Then $f$ is isotopic to either  $\mathrm{id}_{N'}$ or  $f'$ rel $\partial N'$.
\end{lem}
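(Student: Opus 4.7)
The plan is to use the isotopy extension theorem to convert the given isotopy on $N$ into one that preserves $D'$ pointwise, and then identify the residual correction near $\partial D'$. Let $F_t \colon N \to N$ be an isotopy from $F_0 = f$ to $F_1 = \mathrm{id}_N$ rel $\partial N$. Since $f$ is supported in a collar of $\partial D$ disjoint from $D'$, both $F_0|_{D'}$ and $F_1|_{D'}$ equal $\mathrm{id}_{D'}$, so $t \mapsto F_t|_{D'}$ defines a loop in $\mathrm{Emb}(D', \mathring N)$ based at the inclusion. By isotopy extension there exists an ambient isotopy $G_t \in \mathrm{Diff}(N, \partial N)$, compactly supported in $\mathring N$, with $G_0 = \mathrm{id}$ and $G_t|_{D'} = F_t|_{D'}$. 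Setting $H_t = G_t^{-1} \circ F_t$ yields an isotopy with $H_t(D') = D'$ and $H_t|_{D'} = \mathrm{id}_{D'}$, starting at $H_0 = f$ and ending at $H_1 = G_1^{-1}$. Restricting to $N'$ gives an isotopy in $\mathrm{Diff}(N', \partial N')$ from $f$ to $G_1^{-1}|_{N'}$, so it suffices to show the latter represents $[\mathrm{id}_{N'}]$ or $[f']$.

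The class $[G_1^{-1}|_{N'}]$ equals the image of $[F_t|_{D'}]$ under the connecting homomorphism of the fibration $\mathrm{Diff}(N',\partial N') \to \mathrm{Diff}(N, \partial N) \to \mathrm{Emb}(D', \mathring N)$, where the first term is identified with $\mathrm{Diff}(N, \partial N \cup D')$ via extension by the identity on $D'$. Using the homotopy equivalence $\mathrm{Emb}(D^n, \mathring N) \simeq F_+(\mathring N)$ with the oriented frame bundle, together with the long exact sequence of $SO(n) \to F_+(\mathring N) \to \mathring N$, any loop in $F_+(\mathring N)$ becomes, after modifying $F_t$ by loops of ambient diffeomorphisms of $N$ that drag the center of $D'$ along loops in $\mathring N$, a ``pure rotation'' loop in the $SO(n)$-fiber, classified by $\pi_1 SO(n) = \z/2$ for $n \ge 3$. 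In the setting in which the lemma is invoked ($M$ simply connected), $\pi_1 \mathring N = 0$ and this reduction is automatic; more generally one uses that compactly supported ambient flows realize loops in $\pi_1 \mathring N$ by elements of $\pi_1 \mathrm{Diff}(N, \partial N)$ that absorb the translational part.

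After this normalization, $F_t|_{D'}$ is a rotation of $D'$ by a loop $\alpha_t \in SO(n)$ with $\alpha_0 = \alpha_1 = I$. Then $G_t$ can be chosen supported in a slightly larger disk $D'' \supset D'$, acting as rotation by $\alpha_t$ on $D'$ and, on the collar $D''\setminus\mathring D' \cong S^{n-1}\times [0,1]$ with $S^{n-1}\times\{0\} = \partial D'$, given by $(x, r) \mapsto (\alpha_{t(1-r)} x, r)$, which interpolates from the rotation at the inside to the identity at the outside. At $t = 1$, $G_1$ fixes $D'$ pointwise, but on the collar it performs a boundary twist $(x, r) \mapsto (\alpha_{1-r} x, r)$ classified by $[\alpha]^{-1} \in \pi_1 SO(n)$. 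Consequently $G_1^{-1}|_{N'}$ is supported in this collar of $\partial D'$ in $N'$ and realizes the boundary diffeomorphism along $\partial D'$ associated with $[\alpha] \in \z/2$: it is isotopic to $\mathrm{id}_{N'}$ if $[\alpha]=0$, and to $f'$ otherwise. The main technical obstacle is the reduction in the second paragraph, which is immediate in the simply connected case of the paper but more delicate in full generality, since it requires showing that the image of $\pi_1 \mathrm{Diff}(N, \partial N) \to \pi_1 \mathrm{Emb}(D', \mathring N)$ contains the $\pi_1 \mathring N$-part of the target.
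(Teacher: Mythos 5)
Your argument is essentially the same as the paper's, just phrased through the fibration $\mathrm{Diff}(N', \partial N') \to \mathrm{Diff}(N, \partial N) \to \mathrm{Emb}(D', \mathring N)$ rather than by tracking the arc $h(x \times I)$ and the tube $h(D' \times I)$ in the mapping cylinder $N\times I$; both reduce, via isotopy extension and a tubular-neighborhood normalization, to a residual diffeomorphism near $\partial D'$ classified by $\pi_1 SO(n) \cong \z/2$. The $\pi_1 \mathring N$ caveat you flag is real but applies equally to the paper's own argument --- the assertion that $h(x \times I)$ is isotopic to $x \times I$ already requires the loop $t \mapsto h_t(x)$ to be null-homotopic in $\mathring N$ --- and all applications in the paper are to simply-connected $M$, where this is automatic.
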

\begin{proof}
We have  the following observation from the proof of \cite[Hilfsatz]{Wall1}: Let $h \colon N \times I \to N \times I$ be an isotopy between $f$ and $\mathrm{id}_{N}$, $x$ be the center of  $D'$, then $h(x \times I)$ is isotopic to $x \times I$. By the ambient isotopy theorem, we may modify $h$ such that $h(x \times I)=x \times I$. Now $h(D' \times I)$ is a tubular neighborhood of $x \times I$, by the tubular neighborhood theorem we may assume that $h(D' \times I)=D' \times I$ and it is a bundle map, thus $h|\partial D' \times I$ is either the identity or the boundary diffeomorphism. Reparametrize the boundary of $\partial N' \times I$, we may view $h$ as an isotopy between $f$ and $\mathrm{id}_{N'}$ or $f'$ rel $\partial N'$.
\end{proof}

\begin{prop}\label{thm:bd3}
Let $M_{1}$, $M_{2}$ be closed oriented $n$-manifolds, $N_{i}=M_{i} - D_{i}$. Assume that the boundary diffeomorphism is isotopic to $\mathrm{id}_{N_{i}}$ rel $\partial N_{i}$ for $i=1,2$. Then for $N=(M_{1} \# M_{2})-D$, the boundary diffeomorphism is also isotopic to $\mathrm{id}_{N}$ rel $\partial N$.
\end{prop}
\begin{proof}
Let $D' \subset N_1$ be an embedded disk disjoint from $D_1$, $N_{1}'=N_{1}-D'$, then the boundary diffeomorphism $f$ in a neighborhood of $\partial D_1$ is isotopic rel $\partial N_{1}'$ to either $\mathrm{id_{N'}}$ or the boundary diffeomorphism $f'$ in a neighborhood of $\partial D'$. In the second case $f'$ is isotopic to $\mathrm{id}_{N_{2}}$ rel $\partial D'$. Put these two isotopies together we have an isotopy from $f$ to $\mathrm{id}_{N}$ rel $\partial D_{1}$
\end{proof}

Combining Theorem \ref{thm:bd1}, Proposition \ref{thm:bd3} and the fact that for $\# \, r(S^{3} \times S^{3})$ the boundary diffeomorphism is isotopic to $\mathrm{id}$ rel $D$ (\cite[Lemma 3]{Kr78}), we have proved 
\begin{thm}\label{prop:forgetful}
The forgetful homomorphism $\K(M,D) \to \K(M)$ is an isomorphism.
\end{thm}

\section{The non-spin case}\label{sec:nonspin}
So far we have studied the representation $\rho \colon \m(M) \to \Sp(b_3(M),\z)$ and the variation map $v_{x,p} \colon \T(M) \to (H^3(M) \times H^3(M))/\Delta_{(d(M),k(M))}$ parallel for $M$ spin and non-spin, and prove the theorems about $\K(M,D)$ and $\K(M)$ for $M$ spin. In this section we determine the kernels $\K(M,D)$ and $\K(M)$ in the non-spin case, alone the lines of section \ref{7-mfds} --\ref{sec:K}.

\subsection{Simply-connected nonspin $7$-manifolds}
Let $\mathcal N'$ denote the set of oriented diffeomorphism classes of simply-connected closed non-spin $7$-manifolds $N$ whose homology groups are isomorphic to that of $S^{2} \times S^{5}$. By \cite{KrSt}, the set $\mathcal N'$ is determined by the Kreck-Stolz invariant, which is defined as follows. 

The normal $2$-type of such a manifold $N$ is 
$$p \colon B=\cp^{\infty} \times B\mathrm{Spin}\to BSO \times BSO \stackrel{\oplus}{\longrightarrow} BSO,$$
where the map $\cp^{\infty} \to BSO$ is the classifying map of the canonical complex line bundle $\eta$ over $\cp^{\infty}$, the map $ B\mathrm{Spin}\to BSO$ is the projection, and the map $\oplus$ is induced by the Whitney sum. In this case, the corresponding $B$-bordism group $\Omega_{7}(B,p)=\omsp_{7}(\cp^{\infty}; \eta)=0$ (\cite{KrSt}). Therefore we may take $G \colon W^{8} \to B$ such that $\partial (W, G)=(N, \ov{\nu})$, and the signature of $W$ is zero. We have cohomology classes $\alpha = G^*(x^2)$, $\beta=G^*(\frac{p_1}{2}) \in H^4(W)$, and $\ov{\alpha}$, $\ov{\beta} \in H^{4}(W, \partial W)$ which are preimages of $\alpha$ and $\beta$. Now we define  characteristic numbers
$$\begin{array}{rcl}
\sigma_1(N) &= & \langle \ov{\alpha}^2, [W,\partial W] \rangle
\\
\sigma_2(N) & = & \langle \ov{\alpha} \cup \ov{\beta}, [W,\partial W] \rangle \\
\sigma_3(N) & = & \langle  \ov{\beta}^2, [W,\partial W] \rangle  \\
\end{array}$$
and obtain a well-defined invariant $KS(N)=([\sigma_1(N), \sigma_2(N), \sigma_3(N)]) \in \z^3/L$,  where $L\subset \z^{3}$ is the lattice generated by the corresponding characteristic numbers of closed $8$-dimensional $B$-manifolds with signature $0$. A set of generators of the bordism group is 
\begin{equation}
\begin{array}{rcl}
e_1 & =  & B^8 - 8 \cdot 28 \hp^2 \\
e_2 & = & [\cp^2 \times \cp^2 \to \cp^{\infty}]-\hp^2\\
e_3 & = & [\cp^4 \to \cp^{\infty}]-\hp^2\\
\end{array}
\end{equation}
The characteristic numbers are 
\begin{equation}
\begin{array}{c|ccc}
 & e_1 & e_2 & e_3 \\
\hline
\sigma_1 & 0 & 6 & 1  \\
\sigma_2 & 0 & -6 & -3 \\
\sigma_3 & -8 \cdot 28 & 8 & 8 \\
\end{array}
\end{equation}
and the lattice $L$ is generated by the column vectors of this table.

We simplify the lattice $L$ and get $\z^3 /L \cong \z/4 \oplus \z/(3 \cdot 8 \cdot 28)$, with the summands determined by 
$$\lambda = \sigma_1 + 3 \sigma_2 + \sigma_3 \pmod 4$$
$$\mu = 6 \sigma_1 + 10 \sigma_2 + 3 \sigma_3 \pmod{3 \cdot 8 \cdot 28}.$$
The obstruction $\theta(W,G)$ to turning a $B$-cobordism $(W,G)$ between two such manifolds to an $h$-cobordism is elementary if and only if they have the same $KS$-invariant. Therefore, analogous to Lemma \ref{lem:inv}, we have
\begin{lem}
There is an injective map between sets 
$$KS=[(\lambda, \mu)] \colon \mathcal N' \to \z^3/L = \z/4 \oplus \z/(3 \cdot 8 \cdot 28).$$ 
\end{lem}

There is an action of $\Theta_7$ on $\mathcal N'$ by taking connected-sum with a homotopy $7$-sphere. Let $\Sigma= \partial E_8$, a generator of $\Theta_7$, we may compute the invariants of $N \# \Sigma$  using $(W \natural E_8) \# (-8 \hp^2)$, where $W$ is the couboundary of $N$, we get 
\begin{equation}\label{eqn:mu}
\lambda(N \# \Sigma) = \lambda (N), \ \ \ \mu(N \# \Sigma)= \mu(N) - 24.
\end{equation}
Therefore the action of $\Theta_7$ on $\mathcal N'$ is free, and we have an injective map
$$[(\lambda, \mu)] \colon \ov{\mathcal N'} \to \z/4 \oplus \z/24.$$
where $\ov{\mathcal N'}$ is the orbit set.

\subsection{The generalized Kreck-Stolz invariant}
To defined the generalized Kreck-Stolz invariant in the non-spin case, we consider the following fibration
$$p \colon B=\cp^{\infty} \times K(\z,4) \times B\mathrm{Spin}\to BSO \times BSO \stackrel{\oplus}{\longrightarrow} BSO,$$
where $\cp^{\infty} \to BSO$ is the classifying map of the canonical complex line bundle $\eta$ over $\cp^{\infty}$ and $K(\z,4) \times  B\mathrm{Spin} \to  B\mathrm{Spin} \to BSO$ is the composition of natural projections. For a diffeomorphism $f \in \K(M,D)$, analogous to the spin case, we have maps $M_f \to \cp^{\infty}$ and $M_f \to K(\z,4)$. Now the difference between the stable normal bundle of $M_f$ and the pullback of $\eta$ has a preferred spin structure, which induces a map $M_f \to B\mathrm{Spin}$. Putting these together we have a map $g \colon M_{f} \to B$, which is a lifting of the normal Gauss map of $M_f$. All maps involved are unique up to homotopy in the appropriate sense. The corresponding bordism group is $\om_{*}(B,p) = \omsp_{*}(\cp^{\infty} \times K(\z,4);\eta)$, the twisted spin cobordism group of $\cp^{\infty} \times K(\z,4)$. A computation using the Atiyah-Hirzebruch spectral sequence shows that  $\omsp_{7}(\cp^{\infty} \times K(\z,4);\eta)=0$ (c.~f.~\cite[Theorem 6]{Kr18}). Now $H^4(B;\mathbb Z)$ is generated by $x^2\in H^4(\cp^{\infty})$, $y \in H^4(K(\z,4))$ and $\frac{p_1}{2} \in H^4(B\mathrm{Spin})$. Note that
$$g^{*}(\frac{p_{1}}{2})=\frac{p_1}{2}(\nu M_{f}-g^{*}\eta) = \frac{1}{2}(-d-48l-d)g^*y=-(d+24l)g^*y$$
Therefore $\ker g^*$ is generated by $x^2-dy$ and $\frac{p_1}{2}+(d+24l)y$. Let $G \colon W^8 \to B$ be a coboundary of $g \colon M_{f} \to B$, $G^*(x^2-dy)=\alpha$, $G^*(\frac{p_1}{2}+(d+24l)y)=\beta$, there are preimages $\ov \alpha, \ov \beta \in H^4(W, \partial W)$ of $\alpha$ and $\beta$, respectively. We have characteristic numbers
$$\left\{ \begin{array}{rcl}
\sigma_1 & = & \langle \ov \alpha \cup \ov \alpha ,[W,\partial W] \rangle \\
\sigma_2 & = & \langle \ov \alpha \cup \ov \beta ,[W,\partial W] \rangle \\
\sigma_3 & = & \langle \ov \beta \cup \ov \beta ,[W,\partial W] \rangle \\
\end{array} \right.$$
and define the generalied Kreck-Stolz invariant of $f$ as 
$$S([f])=[(\sigma_1, \sigma_2, \sigma_3)] \in \z^{3}/\bar L(M).$$

The lattice $\bar L(M)$ is given by the characteristic numbers of the generators of the subgroup of $\omsp_8(\cp^{\infty} \times K(\z,4);\eta)$ consisting of bordism classes of signature $0$. A set of generators is given as follows: (a sketch of the computation is given in the Appendix)
\begin{equation}
\begin{array}{rcl}
e_1 & =  & B^8 - 8 \cdot 28 \hp^2 \\
e_2 & = & [\cp^2 \times \cp^2 \to \cp^{\infty}]-\hp^2\\
e_3 & = & [\cp^4 \to \cp^{\infty}]-\hp^2\\
e_4 & = & [\hp^2 \to K(\z,4)]-\hp^2\\
e_5 & = & [S^4 \times S^4 \to K(\z,4)]\\
e_6 & = & [\cp^2 \times S^4 \to \cp^{\infty} \times K(\z,4)]
\end{array}
\end{equation}
The lattice $\bar L(M)$ is generated by the column vectors in the following table
\begin{equation}\label{tab:nonspin}
\begin{array}{c|cccccc}
 & e_1 & e_2 & e_3 & e_4 & e_5 & e_6\\
\hline
\sigma_1 & 0 & 6 & 1 & d^2 & 2d^2 & -2d \\
\sigma_2 & 0 & -6 & -3 & -d(d+24l-1) & -2d(d+24l) & 3d+24l\\
\sigma_3 & -8 \cdot 28 & 8 & 8 & (d+24l-1)^2-1 & 2(d+24l)^2 & -4(d+24l)\\
\hline
\end{array}
\end{equation}

Similar to Proposition \ref{prop:injhomo} we have

\begin{prop}\label{prop:ksnonspin}
The generalized Kreck-Stolz invariant defines an injective homomorphism
$$S \colon \K(M, D) \to \z^3/\bar L(M)$$
\end{prop}
In the non-spin case, it can be shown that the Kreck-Stolz invariant of the boundary diffeomorphism is always zero (see \S \ref {subsec:bdns}). Therefore in this case we  have an isomorphism $\K(M,D) \stackrel{\cong}{\rightarrow} \K(M)$.

\subsection{The group $\K(M,D)$ is isomorphic to $\K(M)$}\label{subsec:bdns}
In the non-spin case the generalized Kreck-Stolz invariant of the boundary diffeomorphism can be calculated directly from the $\mathrm{spin}^c$-coboundary $V$ whose construction is the same as Equation (\ref{eqn:V}). Let $L$ be a complex line bundle over $V$ with $c_1(L)=(x,z) \in H^2(V) = H^2(M) \oplus H^2(E)$, where $x$ and $z$ are generators of corresponding cohomology groups, then the spin structure on $\nu V \oplus L$ restricts to the non-trivial  spin-structure on $S^1\times D$. The inclusion map induces an isomorphism $H_4(D^2 \times M) \to H_4(V)$, therefore the intersection pairing $H_4(V) \times H_4(V) \to \z$ is trivial. Hence all the characteristic numbers $\sigma_i=0$ ($i=1,2,3$). This shows that the generalized Kreck-Stolz invariant of the boundary diffeomorphism is zero, hence it is isotopic to the identity rel $D$. We have an isomorphism $\K(M,D) \stackrel{\cong}{\rightarrow} \K(M)$. Together with Proposition \ref{prop:ksnonspin} this finishes the proof of Theorem \ref{thm:kernel} in the non-spin case.

\subsection{The group $\m(S^2 \wt D^4, \partial)$}
In the non-spin case, our manifold $M$ under consideration can be decomposed as a union
$$M=(S^2\wt D^{4}) \cup ((S^{2} \wt D^{4}) \# r(S^3 \times S^3))$$
where $S^{2} \wt D^{4}$ is the total space of the nontrivial $D^{4}$-bundle over $S^{2}$.
Let $S^{2} \wt S^{4}$ be the total space of the non-trivial linear $S^{4}$-bundle over $S^{2}$, $\T(S^2 \wt S^4)$ be the group consisting of isotopy classes of diffeomorphisms acting trivially on homology, similar to the constructions in \S \ref{subsec:s2} (Lemma \ref{lem:bij}), there is a surjective map $\Psi \colon \T(S^2 \wt S^4) \to \mathcal N'$, and a commutative diagram
$$\xymatrix{
\m(S^2 \wt D^4, \partial) \ar[r] & \T(S^2 \wt S^4) \ar[r]^{\ \ \ \ \Psi} \ar[d]^S & \mathcal N' \ar[d]^{KS} \\
& \z^3/L \ar[r]^= & \z^3/L}$$
where the two maps in the first row are surjective and the two vertical maps are injective.

From Equation (\ref{eqn:mu}),  the homomorphism  $\Theta_7 \to \m(S^2 \wt D^4, \partial)$ given by disk-supported diffeomorphisms is injective. Denote the cokernel by $\ov{\m}(S^2 \wt D^4, \partial)$,  then there are homomorphisms
$$\ov{\m}(S^2 \wt D^4, \partial) \to \ov{\mathcal N'} \to \z/4 \oplus \z/24$$
where the first one is surjective and the second one is injective.

An upper bound of $\ov \m(S^{2} \wt D^{4}, \partial)$ is obtained as follows. Let $\eta$ be the pull back of the canonical complex line bundle over $\cp^{\infty}$ to the 3rd stage Postnikov tower $P_{3}(S^{2})$. Then similar to Lemma \ref{lem:surj}, there is a surjective homomorphism
$$\omst_7(P_3(S^2);\eta) \twoheadrightarrow \ov{\m}(S^2 \wt D^4, \partial) $$
with $\omst_7(P_3(S^2);\eta) \cong \z/6$ (computation in the Appendix, see Lemma \ref{lem:order2}).

Let $\xi_m$ be an oriented $4$-dimensional real vector bundle over $\cp^2$ with
$$w_2(\xi_m)=0, \ \ e(\xi_m)=1, \ \ p_1(\xi_m)=4m+2 \ \ (m \in \z).$$
These bundles can be obtained by pulling back suitable vector bundles over $S^4$.
Let $W_m=D(\xi_m)$ be the disk bundle and $N_m=\partial W_{m}=S(\xi_m)$. Then we may use $W_m \# (-\hp^2)$ to compute the invariants of $N_m$ and get $$\lambda(N_m)=4m^2+6m, \ \ \  \mu(N_m)=12m^2+16m.$$
Especially, $(\lambda(N_{-1}), \mu(N_{-1}))=(-2,-4)$. Recall from Equation (\ref{eqn:mu}) we have $(\lambda((S^2 \wt S^4) \# \Sigma), \mu((S^2 \wt S^4) \# \Sigma)) = (0,-24)$. This shows that  $ \frac{1}{4}\mu \colon \mathcal N' \to \z/(6\cdot 28)$ is an isomorphism, and $N_{-1}$ is a generator.

\begin{thm}
There are isomorphisms
$$\m(S^2 \wt D^4, \partial) \stackrel{\cong}{\longrightarrow} \T(S^2 \wt S^4) \stackrel{\cong}{\longrightarrow} \z/(6 \cdot 28)$$
where the second isomorphism is given by $\mu/4$.
\end{thm}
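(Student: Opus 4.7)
The plan is to mirror the spin-case argument of Theorem \ref{thm:S2D4}, combining the injectivity of the $S$-invariant on $S\m(S^{4} \wt S^{2})$ with an upper bound from $\omst_{7}(P_{3}(S^{2});\eta) \cong \z/6$ and a lower bound extracted from the explicit manifold $N_{-1}$.

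First I would establish the first isomorphism $\m(D^{4}\wt S^{2},\partial) \cong S\m(S^{4}\wt S^{2})$ by the same template as in \S\ref{sec:product}. The map is given by extension by the identity along the decomposition $S^{4}\wt S^{2} = (D^{4}\wt S^{2}) \cup (D^{4}\wt S^{2})$; it is surjective because $\pi_{2}SO(4)=0$ allows any $f \in S\m(S^{4}\wt S^{2})$ to be isotoped to the identity on one of the two halves, and it is injective by Lemma \ref{lem:S1action} applied to the fiberwise $U(1)$-action on $S^{4}\wt S^{2}$ induced from the standard action on the base $S^{2}\subset \mathbb{C}$.

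Next I would match upper and lower bounds on $|\m(D^{4}\wt S^{2},\partial)|$ to obtain order $168$. The upper bound $28\cdot 6$ is already in place: the injective embedding $\Theta_{7}\hookrightarrow \m(D^{4}\wt S^{2},\partial)$ via the local diffeomorphism $\partial E_{8}$, combined with the surjection $\Phi\colon \omst_{7}(P_{3}(S^{2});\eta)\cong \z/6 \twoheadrightarrow \ov{\m}(D^{4}\wt S^{2},\partial)$ (the twisted analog of Lemma \ref{lem:surj}), yields this bound. For the lower bound I would trace $N_{-1}$ through the injective $S$-invariant into $\z^{3}/L \cong \z/4\oplus \z/672$: its invariants $(\lambda,\mu)=(-2,-4)$ generate a cyclic subgroup whose order is $\mathrm{lcm}(2,168)=168$, since $\mathrm{ord}(-4\in \z/672)=672/\gcd(4,672)=168$. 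Injectivity of the $S$-invariant then forces $|\m(D^{4}\wt S^{2},\partial)|\ge 168$, giving equality and cyclicity.

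With both bounds matching, the image of $\m(D^{4}\wt S^{2},\partial)$ in $\z/4\oplus \z/672$ is exactly the cyclic subgroup $\langle(-2,-4)\rangle$ of order $168$; on this subgroup the $\mu$-coordinate is always divisible by $4$, so $[\mu/4]$ descends to a well-defined homomorphism $\m(D^{4}\wt S^{2},\partial)\to \z/(6\cdot 28)$ sending $N_{-1}$ to $-1$, hence an isomorphism. The main obstacle in executing this plan lies in the upper-bound step: checking that the modified-surgery construction behind Lemma \ref{lem:surj} transfers verbatim when ordinary spin bordism is replaced by its $\eta$-twisted version, and verifying $\omst_{7}(P_{3}(S^{2});\eta)\cong \z/6$ by a twisted Atiyah--Hirzebruch computation (deferred to the appendix as Lemma \ref{lem:order2}). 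No new conceptual difficulty arises, but one must carry the line bundle $\eta$ along at every Postnikov and surgery step.
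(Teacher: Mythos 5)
Your proposal is correct and essentially mirrors the paper's argument. One small point of logic deserves a remark: Lemma~\ref{lem:S1action} is used to identify $S\m(S^4\wt S^2)$ with $S\m(S^4\wt S^2,D)$ (so that the generalized Kreck--Stolz invariant, defined on relative mapping class groups, is available on $S\m(S^4\wt S^2)$), rather than to show injectivity of $\m(D^4\wt S^2,\partial)\to S\m(S^4\wt S^2)$ directly; the injectivity of this extension map is in fact a consequence of the order-matching you set up (upper bound $168$ from $\Theta_7$ and $\omst_7(P_3(S^2);\eta)\cong\z/6$, lower bound $168$ from the injective $S$-invariant and $N_{-1}$), since a surjection of finite groups of equal order is a bijection. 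Your lower-bound bookkeeping is slightly repackaged relative to the paper --- you compute the order of $(\lambda(N_{-1}),\mu(N_{-1}))=(-2,-4)$ directly in $\z/4\oplus\z/672$ to get $168$ at once, while the paper computes the order in $\z/4\oplus\z/24$ to get $|\ov{\m}(D^4\wt S^2,\partial)|=6$ and then multiplies by $|\Theta_7|=28$ --- but these carry the same content, and in both cases the key point is that the integer characteristic number is literally $\mu(N_k)=12k^2+16k$ (so that $\mu(N_{-1})=-4$ modulo $672$, not merely modulo $24$), which one reads off from the coboundary $D(\xi_{-1})\sharp(-\hp^2)$. Your description of $\mu/4$ as the resulting isomorphism onto $\z/168$ is also correct: on the cyclic image $\langle(-2,-4)\rangle$ the $\mu$-coordinate is always divisible by $4$, and $\mu/4$ hits a generator. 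You correctly flag that the only real work is transferring Lemma~\ref{lem:surj} and the Atiyah--Hirzebruch calculation to the $\eta$-twisted string bordism setting, which is precisely where the paper invests its effort (Lemma~\ref{lem:order2}).
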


\begin{lem}
There is a commutative diagram
$$\xymatrix{
\m(S^2 \wt D^4, \partial)  \ar[d] \ar[r]^{\ \ S} & \z^3/L \ar[d] \\
\K(M) \ar[r]^{S} & \z^3/\bar L(M) }$$
in which the vertical homomorphisms are surjective and the horizontal ones are injective.
\end{lem}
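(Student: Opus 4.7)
The plan is to mirror the spin-case arguments of Lemma \ref{lem:mk} and Lemma \ref{lem:commute}, with the obvious modifications for the twisted spin normal $3$-type. First I would define the left vertical by extending a diffeomorphism $f$ of $(D^4 \wt S^2, \partial)$ via the identity on the complementary summand under the decomposition $M = (D^4 \wt S^2) \cup (D^4 \wt S^2) \sharp g(S^3 \times S^3)$; call the extension $\widetilde f$. Then $\widetilde f$ acts trivially on $H_*(M)$ and on $\pi_3(M)$, and choosing the preimage $\bar y \in H^4(M_{\widetilde f})$ of the generator $y$ to be supported on the complementary piece makes $x^2 - d\bar y$ and $\frac{p_1}{2}(TM_{\widetilde f} \oplus \eta) - k \bar y$ vanish tautologically, so $\tau(\widetilde f) = 0$ and $\widetilde f \in \K(M)$. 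The right vertical is the quotient $\z^3/L \to \z^3/\bar L(M)$ induced by the inclusion $L \subset \bar L(M)$, which is surjective by inspection: comparing \eqref{tab:nonspin} and \eqref{tab:nonspin1}, the columns $e_1, e_2, e_3$ of $\bar L(M)$ coincide with the three generators of $L$.

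For surjectivity of the left vertical I would adapt the Browder--Levine argument of Lemma \ref{lem:mk}. Starting from $f \in \K(M)$, the triviality of the action on $\pi_3(M)$ together with Haefliger's theorem \cite{Hae61} yields embeddings $(S^3 \times S^1)_i \subset M_f$ for $i = 1, \ldots, 2g$; the vanishing of $\tau(f)$ forces their normal bundles to be trivial, so they assemble into an embedding $(\sharp g(S^3 \times S^3) - \mathring D) \times S^1 \subset M_f$. The closed complement $X$ has $\pi_1 X \cong \z$ and finitely generated $H_*(\widetilde X)$, so Browder--Levine's fibration theorem \cite{BrLe} realises $X$ as the mapping torus of a diffeomorphism of $N - \mathring D$ with $N$ a non-spin homology $\cp^3$. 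Conjugation in $\m(M)$ then replaces $f$ by the extension of this diffeomorphism, and a final isotopy using $N = (D^4 \wt S^2) \cup (D^4 \wt S^2)$ brings the support into a single $D^4 \wt S^2$ summand.

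Commutativity is proved by the construction of Lemma \ref{lem:commute}. A null-bordism $W \to \cp^{\infty} \times B\mathrm{Spin}$ of the mapping torus of $f$ (with the twisted structure appropriate to the non-spin normal $B$) extends to a null-bordism
$$
V = W \cup_{S^2 \times S^3 \times I \times I} \bigl( ((D^4 \wt S^2) \cup g(S^3 \times S^3)) \times I \times I \bigr) \longrightarrow B
$$
of the mapping torus of $\widetilde f$, where the product piece maps by $\bar\nu \times \mathrm{id} \times \mathrm{id}$. Wall's non-additivity formula contributes a vanishing correction term, so $\mathrm{sign}(V) = 0$ and $V$ is admissible for computing $S(\widetilde f)$. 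Since the relative classes $\bar\alpha$ and $\bar\beta$ pull back trivially from the product factor, only the $W$-contribution survives, and $S(\widetilde f)$ is the image of $S(f)$ in $\z^3/\bar L(M)$.

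Injectivity of the horizontal maps is immediate from results already available: the top map is the identification $\m(D^4 \wt S^2, \partial) \cong \z/6\cdot 28$ via $\mu/4$ established in the preceding theorem, and the bottom map is injective by the non-spin analog of Proposition \ref{prop:injhomo} proved at the opening of this section. The main technical obstacle I foresee is careful bookkeeping of the twisted spin structure when gluing to form $V$: one must check that the twist by $\eta$ on the $M \times S^1$ factor patches coherently across the product piece attached to $W$, so that the resulting $B$-structure on $V$ genuinely restricts to the normal $B$-structure on the mapping torus of $\widetilde f$. A secondary but routine check is the finite generation of $H_*(\widetilde X)$ required by Browder--Levine, which reduces to the same Mayer--Vietoris computation as in the spin case.
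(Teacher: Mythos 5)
Your proposal is correct and follows exactly the approach the paper intends: the paper states this lemma without a written proof, explicitly signalling (``along the lines of section \ref{sec:gks}--\ref{sec:K}'') that one should adapt Lemma~\ref{lem:mk} and Lemma~\ref{lem:commute} from the spin case, and you have done precisely that --- Browder--Levine for surjectivity of the left vertical, the glued bordism $V$ for commutativity, the column comparison of tables~\eqref{tab:nonspin} and~\eqref{tab:nonspin1} for the right vertical, and the established injectivity of the two $S$-invariants for the horizontals. The secondary checks you flag (coherence of the twisted spin structure on $V$, finite generation of $H_*(\widetilde{X})$) are exactly the routine verifications the paper leaves implicit.
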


Simplify the matrix in (\ref{tab:nonspin}), by letting
$$\lambda=\sigma_1+3\sigma_2+\sigma_3$$
$$\mu=6\sigma_1+10\sigma_2+3\sigma_3$$
we have the following table (in which $\delta=d/2$, recall that in the non-spin case $d$ is even)
$$\begin{array}{c|ccccc}
\mu/4 & 6\cdot 28 & 0 & -\delta^2-48\delta l +2\delta +432l^2 -36l & -2(\delta^2+96\delta l -864l^2)& 3(\delta-4l)\\
\lambda & 0 & -4 & -4\delta^2-96\delta l +2\delta +576l^2-48l & -2(4\delta^2 +48\delta l -576l^2) & 6(\delta -4l)
\end{array}$$
and $\z^3/\bar L(M) \cong \z^2/L(M)$ with $L(M)$ given by the column vectors of this matrix.

Now consider the homomorphism $\z^3 / L \cong \z/(6 \cdot 28) \to \z^2/L(M)$. Since $(\frac{1}{4}\mu(N_{-1}), \lambda(N_{-1}))=(-1,-2)$, the image of $1 \in \z/(6 \cdot 28)$ is $ [(-1,-2)]$. We summarize the above calculations in the following commutative diagram
$$\xymatrix{
\Theta_7 \ar[d] \ar[dr]^{1 \mapsto 6} & \\
\m(S^2 \wt D^4, \partial) \ar[r]^{\ \ \mu/4}_{\ \ \cong} \ar@{->>}[d] & \z/(6\cdot 28) \ar[d]^{1 \mapsto [(-1,-2)]} \\
\K(M) \ \ar@{>->}[r]^{(\mu/4, \lambda)} & \z^2/L(M) }$$

Therefore to determine the group $\K(M)$ we only need to compute the order of $[(1,2)]$ in $\z^2/L(M)$. Since the order is necessarily a factor of $6\cdot 28$, we only need to calculate modulo $3$, $7$ and $8$ respectively. The result is presented in the table in Theorem \ref{thm:kernel} (non-spin case).

\subsection{Generators of $\m(S^2 \wt S^4)$}
Let $S \colon S^3 \times D^3 \to S^4 \wt S^2$ be an embedding such that the restriction of $S$ on $S^3 \times \{0\}$ is a generator of $\pi_3(S^2 \wt S^4)$. 
Let $\mathbf{t}_{S, \alpha}$ be the Dehn twist in $S^{3} \times D^{3} \subset S^{2} \wt S^{4}$ with parameter $\alpha \in \pi_{3}SO(4)$, $W=E_{\alpha}\cup_{S^{3}\times D^{4}} S^{2}\wt S^{4} \times D^{2}$ be the coboundary of the mapping torus of $\mathbf{t}_{S, \alpha}$ constructed in Lemma \ref{lem:dehn2}.
Similar to the construction in \S \ref{subsec:gen},  a basis of $H_4(W) \cong \z^3$ is $[S^4_b]$, $[Y]$ and $[S^4_0]$, the intersection form is represented by the same matrix as in \S \ref{subsec:gen},  and $\frac{1}{2}(p_1(\nu W)-p_1(\eta))=(2b-\chi) [S^4_b]^* + A [Y]^*$ for some $A \in \z$. By similar calculation we have 
$$\mu(\mathbf{t}_{S,\alpha}) =  (12A+20) b-(16+6A-3A^2)\chi .$$
This integer is divisible by $4$ for any $\chi, b \in \z$. This implies that $A$ is an even integer. Then we may change the embedding $S$ by some $v \in \pi_3SO(3)$ such that $A =0$ (see Equation \ref{eqn:y}). We denote this embedding by $Z \colon S^3 \times D^3 \to S^2 \wt S^4$. Then $\frac{1}{4}\mu(\mathbf{t}_{Z,\alpha})=5b - 4\chi \in \z/(6 \cdot 28)$. Let $\rho \in \pi_3SO(4)$ be the element with $\chi=0$ and $b=1$, then $\mathbf{t}_{Z,\rho}$ is a generator of $\T(S^{2} \wt S^{4})$.

\section{Group theoretic properties}\label{sec:gp}
\subsection{Generators of $\m(M)$}
In this subsection we write down explicit generators of $\m(M)$ for $r \ge 1$, and hence prove Theorem \ref{thm:gen}. For the notations of the Dehn twists we use the notations introduced before Theorem \ref{thm:gen}.

First of all, similar to the case of surfaces, from a presentation of the symplectic group $\Sp(2r,\z)$ (c.f.~\cite[Theorem 1]{Bir71}) one sees that the set of Dehn twists $\mathbf{e}_i$, $\mathbf{f}_j$ ($1 \le i,j\le r$) and $\mathbf{f}_{i,i+1}$ ($1 \le i \le r-1$) maps to a set of generators of $\Sp(2r,\z)$.

Next we give generators of the Torelli group $\T(M)$. Recall that there is a central extension
$$0  \to \K(M) \to \T(M) \stackrel{v_{x,p}}{\longrightarrow} (H \times H) /\Delta_{d,k} \to 0$$
Let $a_{i}$, $b_{j}$ and $a_{i}'$, $b_{j}'$ ($1\le i,j \le r$) be the standard symplectic basis of $0 \times H$ and $H \times 0$ respectively. Let 
$$\mathbf{a}_i=\mathbf{t}_{S_i, \rho}, \ \ \mathbf{b}_j=\mathbf{t}_{T_j,\rho}, \ \ \mathbf{a}_i'=(\mathbf{t}_{S_i, \epsilon})^{-1} \circ \mathbf{g}_i, \ \ \mathbf{b}_j'=(\mathbf{t}_{T_j,\epsilon})^{-1} \circ \mathbf{h}_j,$$ 
by the definition of $v_{x,p}$,  these are pre-images of $a_{i}$, $b_{j}$ and $a_{i}'$, $b_{j}'$ respectively. The commutators of these diffeomorphisms are in $\K(M)$. Now we compute the $S$-invariants of  the commutators.

We only need to consider the commutators of these diffeomorphisms when the corresponding embedded $3$-spheres of the Dehn twists have non-empty intersection. 

It is shown in \cite[Theorem 2]{Kry02} that $[\mathbf{a}_i, \mathbf{b}_i]=\mathbf{f}_c$, a generator of the disk supported diffeomorphisms. We will give an alternative proof of this fact by computing the generalized Kreck-Stolz invariant of the commutator  $[\mathbf{a}_i, \mathbf{b}_i]$. The commutator is a composition of Dehn twists $\mathbf{t}_{S_i, \rho} \circ \mathbf{t}_{T_i, \rho} \circ \mathbf{t}_{S_i, -\rho} \circ \mathbf{t}_{T_i, -\rho}$. Denote the $4$-dimensional vector bundles over $S^4$ corresponding to the Dehn twists by $E_1$, $E_2$, $E_1'$ and $E_2'$, then by Lemma \ref{lem:dehn2}
$$W = (M \times D^2) \cup D(E_1) \cup D(E_2) \cup D(E_1') \cup D(E_2')$$
is a coboundary by which we can compute the generalized Kreck-Stolz invariant of the diffeomorphism. Here we take $t_1$, $t_2$, $t_3$, $t_4 \in S^1$ in order compatible with the natural orientation of $S^1$, and glue the disk bundles in this order to $M \times D^2$ in a neighborhood of $S^3 \subset M \times \{t_i\}$ using the chosen framing. Therefore the $4$-dimensional homology group of $W$ is isomorphic to $\z^7$ ,with a basis consisting of the following classes
\begin{enumerate}
\item $S_1$, $S_2$, $S_1'$, $S_2'$, represented by the base spheres of the  vector bundles, 
\item $Y_1=[D_1 \cup C_1 \cup D_1']$, $Y_2=[D_2 \cup C_2 \cup D_2']$, where $D_1$, $D_1'$, $D_2$, $D_2'$ are fibers of the corresponding disk bundles, and $C_1=S^3 \times [t_1, t_3] \subset M \times S^1$, $C_2=S^3 \times [t_2, t_4] \subset M \times S^1$ whose boundaries are the cores of the embeddings $S_i(S^3 \times \{0\}) \subset M \times \{t_1\}$, $S_i(S^3 \times \{0\}) \subset M \times \{t_3\}$ and $T_i(S^3 \times \{0\}) \subset M \times \{t_2\}$, $T_i (S^3 \times \{0\})\subset M \times \{t_4\}$.
\item $u$ which is the image of a generator of $H_4(M)$.
\end{enumerate}
The intersection matrix of $W$ with respected to this basis is 
$$\left ( \begin{array}{ccccccc}
0 & 0 & 0 & 0 & 1 & 0 & 0 \\
0 & 0 & 0 & 0 & 0 & 1 & 0 \\
0 & 0 & 0 & 0 & -1 & 0 & 0 \\
0 & 0 & 0 & 0 & 0 & -1 & 0 \\
1 & 0 & -1 & 0 & 0 & 1 & 0 \\
0 & 1 & 0 & -1 & 1 & 0 & 0 \\
0 & 0 & 0 & 0 & 0 &  0 & 0 \end{array} \right )$$
Note that since we may push $C_2$ into the interior of $M \times D^2$ (with boundary fixed), the intersection number of $Y_1$ and $Y_2$ equals the intersection number of core of $S_i$ and $T_i$, which is $1$. The other intersection numbers are clear from the geometric construction. The evaluation of the Pontrjagin class of $W$ on these classes are 
$$\langle p_1(W), S_1 \rangle =\langle p_1(W), S_2 \rangle =4 \ \ \langle p_1(W), S_1' \rangle=  \langle p_1(W), S_2' \rangle =-4$$ 
$$\langle p_1(W), Y_1 \rangle = \langle p_1(W), Y_2 \rangle =0.$$
Therefore $p_1(W)=4(S_1^*+S_2^*-(S_1')^*-(S_2')^*)$, and the Poincar\' e dual of a preimage of $p_1(W)$ is $4(Y_1+Y_2-S_1-S_2)$. Note that since the Dehn twists are diffeomorphisms of $\#_r(S^3 \times S^3)$, we only need to compute the first component of the generalized Kreck-Stolz invariant, the other two components are automatically zero. By standard algebraic topology we have the first component of generalized Kreck-Stolz invariant is $\frac{1}{8}s_1=1$. Therefore $[\mathbf{a}_i, \mathbf{b}_i]=\mathbf{f}_c$.

Analogously we compute the generalized Kreck-Stolz invariant of the commutator
\begin{eqnarray*}
 \mathbf{a}_1'\mathbf{b}_1\mathbf{a}_1'^{-1}\mathbf{b}_1^{-1} & = & (\mathbf{t}_{S_1, \epsilon})^{-1} \circ \mathbf{g}_1 \circ \mathbf{t}_{T_1,\rho} \circ (\mathbf{g}_1)^{-1} \circ  \mathbf{t}_{S_1, \epsilon} \circ (\mathbf{t}_{T_1,\rho})^{-1} \\ 
& = & \mathbf{t}_{S_1, -\epsilon} \circ \mathbf{t}_{S_1\#Z, \epsilon} \circ \mathbf{t}_{T_1,\rho} \circ \mathbf{t}_{S_1\#Z, -\epsilon} \circ  \mathbf{t}_{S_1, \epsilon} \circ \mathbf{t}_{T_1,-\rho}
\end{eqnarray*}
Denote the $4$-dimensional vector bundles over $S^4$ corresponding to the parameters of these Dehn twists by $E_1'$, $E_{01}$, $E_2$, $E_{01}'$, $E_1$ and $E_2'$, respectively, then
$$W=M \times D^2 \cup D(E_{1}') \cup D(E_{01}) \cup D(E_2)  \cup D(E_{01}') \cup D(E_1) \cup D(E_{2}')$$
is a coboundary from which we can compute the generalized Kreck-Stolz invariant. The $4$-dimensional homology group of $W$ is isomorphic to $\z^{11}$, with a basis consisting of 
\begin{enumerate}
\item $e_1'$, $e_{01}$, $e_2$, $e_{01}'$, $e_1$, and $e_2'$ represented by the base $4$-spheres of the corresponding vector bundles;
\item $Y_1=[D_{1}' \cup C_1 \cup D_{01}]$, $Y_2=[D_{01} \cup C_2 \cup D_{01}']$, $Y_3=[D_{01}' \cup C_3 \cup D_1]$ and $Y_4=[D_2 \cup C_4 \cup D_2']$, where $D_1'$, $D_{01}$, $D_{01}'$, $D_1$, $D_2$ and $D_2'$ are fibers of the corresponding disk bundles, and $C_i  \subset M \times D^2$ ($i=1, \cdots 4$) are submanifolds whose boundaries are the union of the cores of the corresponding embeddings;
\item the image of a generator of $H_4(M)$.
\end{enumerate}
The intersection matrix with respect to this basis is 
$$\left ( \begin{array}{ccccccccccc}
-1 & 0 & 0 & 0 & 0 & 0 & 1 & 0 & 0 & 0& 0 \\
0  & 1 & 0 & 0 & 0 & 0 & -1 & 1 & 0 & 0& 0 \\
0 & 0 & 0 & 0 & 0 & 0 & 0 & 0 & 0 & 1 & 0 \\
0 & 0 & 0 & -1 & 0 & 0 & 0 & 0 & -1 & 0& 0 \\
0 & 0 & 0 & 0 & 1 & 0 & 0 & 0 & -1 & 0& 0 \\
0 & 0 & 0 & 0 & 0 & 0 & 0 & 0 & 0& -1 & 0 \\
1 & -1 & 0 & 0 & 0 & 0 & 0 & 0 & 0& 0& 0 \\
0 & 1 & 0 & 0 & 0 & 0 & 0 & 0 & 0& 1 & 0 \\
0  & 0 & 0 & -1 & -1 & 0 & 0 & 0 & 0& 0& 0 \\
0  & 0 & 1 & 0 & 0 & -1 & 0 & 1 & 0& 0& 0 \\
0 & 0 & 0 & 0 & 0 & 0 & 0 & 0 & 0& 0& 0 \end{array} \right )$$
The evaluation of the Pontrjagin class of $W$ on these classes are 
$$\langle p_1(W), e_{1}' \rangle =  \langle p_1(W), e_{01}' \rangle = -2, \ \ \langle p_1(W), e_{1} \rangle = \langle p_1(W), e_{01} \rangle =2$$
$$\langle p_1(W), e_{2} \rangle =4, \ \ \langle p_1(W), e_{2}' \rangle = -4$$
$$\langle p_1(W), Y_1 \rangle = \langle p_1(W), Y_2 \rangle = \langle p_1(W), Y_3 \rangle= \langle p_1(W), Y_4 \rangle =0$$

In the non-spin case we get $\mu/4= 5 \in \z/2^a \times \z/3^b \times \z/7^c$,
which is a generator of  this group. Therefore the commutator $[\mathbf{a}_1', \mathbf{b}_1]$ is a generator of $\K(M)$.

In the spin case we have $\frac{1}{2}s_2 =  -1$,  $s_3 =   2  \equiv 0 \pmod 2$. Similar computations of the other commutators show that the $s_3$-invariant is always $0$. From the simplification of the lattices in the proof of Theorem \ref{thm:kernel}, we see when $d$ is odd the generalized Kreck-Stolz invariants of elements in $\K(M)$ all have zero $s_3$-component. Therefore $\K(M)$ is generated by commutators of these Dehn twists. When $d$ is even, we need an additional generator $\mathbf{z}=\mathbf{t}_{Z,\epsilon}$ to produce diffeomorphisms with nontrivial $s_3$-invariant (see Theorem \ref{thm:s2s4}).

Therefore the Torelli group $\T(M)$ is generated by the Dehn twists $\mathbf{a}_i$, $\mathbf{b}_j$, $\mathbf{a}_i'=(\mathbf{e}_i)^{-1} \circ \mathbf{g}_{i}$, $\mathbf{b}_j'=(\mathbf{f}_j)^{-1} \circ \mathbf{h}_{j}$ ($1 \le i,j \le r$), and $\mathbf{z}$ when $M$ is spin and $d$ is even. Together with the Dehn twists $\mathbf{e}_i$, $\mathbf{f}_j$ ($1 \le i,j\le r$) and $\mathbf{f}_{i,i+1}$ ($1 \le i \le r-1$) they form a set of generators of $\m(M)$. This proves Theorem \ref{thm:gen}.



\subsection{Lower homology groups}
In this subsection we compute the lower homology groups of $\m(M)$. The abelianization $(\m(M))^{\mathrm{ab}}$ equals to the first homology group $H_{1}(\m(M))$.  When $r=0$ the group $\m(M)$ itself is an abelian group. So we assume $r \ge 1$. Notice that from the commuting relations of the generators of $\T(M)$ described in the previous subsection, one also obtains the abelianization of $\T(M)$. We summarize the results in the following proposition. Recall that $H=H^{3}(M;\mathbb Z)$.

\begin{prop}
When $M$ is spin
$$(\T(M))^{\mathrm{ab}}= \left \{
\begin{array}{ll}
(H \times H )/\Delta_{d,k} & \textrm{$d$ odd} \\
(H \times H) /\Delta_{d,k} \oplus \z/2 & \textrm{$d$ even}
\end{array} \right. $$

When $M$ is non-spin $(\T(M))^{\mathrm{ab}}= (H \times H )/\Delta_{d,k}$.
\end{prop}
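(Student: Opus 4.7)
The plan is to leverage the central extension
\begin{equation*}
0 \to \K(M) \to \T(M) \stackrel{\tau}{\longrightarrow} (H \times H)/\Delta_{d,k/2} \to 0
\end{equation*}
from Theorem \ref{thm:main1}. Since the quotient is abelian, $[\T(M),\T(M)]$ is contained in $\K(M)$, so I obtain an extension
\begin{equation*}
0 \to \K(M)/[\T(M),\T(M)] \to \T(M)^{\mathrm{ab}} \to (H \times H)/\Delta_{d,k/2} \to 0,
\end{equation*}
and the task reduces to identifying $[\T(M),\T(M)]$ and then analysing this extension.

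Next I would invoke the commutator calculations carried out in the previous subsection. In the non-spin case, commutators such as $[A_1', B_1]$ realise the invariant $\mu/4 = 5$, which is a unit in every cyclic factor of $\K(M) \cong \z/2^{a} \oplus \z/3^{b} \oplus \z/7^{c}$; hence commutators generate $\K(M)$ and the extension collapses to $\T(M)^{\mathrm{ab}} = (H \times H)/\Delta_{d,k/2}$. In the spin case, the same computations show that every commutator of the Dehn-twist generators has $s_3 = 0$, while together they cover all possible values of $s_1/8$ and $s_2/2$; hence $[\T(M),\T(M)]$ equals the kernel of $s_3$ inside $\K(M)$. Reading off the table of Theorem \ref{thm:table}, the $\z/2$-summand detected by $s_3$ is present in $\K(M)$ exactly when $d$ is even. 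This immediately gives $\T(M)^{\mathrm{ab}} = (H \times H)/\Delta_{d,k/2}$ whenever $d$ is odd, and otherwise an extension
\begin{equation*}
0 \to \z/2 \to \T(M)^{\mathrm{ab}} \to (H \times H)/\Delta_{d,k/2} \to 0
\end{equation*}
whose kernel is generated by the image of $f_0^e$ (which lies in $\T(M)$, and whose square lies in $[\T(M),\T(M)]$ since $2 s_3 = 0 \in \z/2$).

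The only remaining step, which I expect to be the main obstacle, is to split this extension in the spin, $d$ even case. My plan is to construct a homomorphism $\T(M) \to \z/2$ that restricts to the reduction of $s_3$ on $\K(M)$. The key observation is that when $d$ is even, the mod-$2$ reductions of the classes $x^2 - d\ov y$ and $\tfrac{1}{2}p_1(M_f) - k\ov y$ on the mapping torus $M_f$ are independent of the choice of lift $\ov y$, so the same characteristic-number construction used to define $s_3$ in \S \ref{sec:gks} produces a mod-$2$ invariant on all of $\T(M)$, not merely on $\K(M)$. Additivity under composition, by the Mayer--Vietoris argument used in the proof of Proposition \ref{prop:injhomo}, then shows that this invariant is a homomorphism, and combining it with $\tau$ delivers the required splitting $(H \times H)/\Delta_{d,k/2} \oplus \z/2 \cong \T(M)^{\mathrm{ab}}$.
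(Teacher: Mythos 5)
Your proposal follows the paper's approach for the first three quarters: reduce to computing the commutator subgroup $[\T(M),\T(M)]$, use the paper's computations of $S$-invariants of commutators of the Dehn-twist generators, and conclude that $[\T(M),\T(M)]$ equals $\K(M)$ (when $M$ is non-spin or $d$ is odd) or $\ker(s_3)\cap\K(M)$ (when $M$ is spin and $d$ is even). That part is correct and is exactly what the paper's terse one-line ``proof'' refers to.

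The gap is in your proposed splitting of the extension $0\to\z/2\to\T(M)^{\mathrm{ab}}\to (H\times H)/\Delta_{d,k/2}\to 0$ in the spin, $d$ even case. You want to extend $s_3$ to a homomorphism on all of $\T(M)$ by running the characteristic-number construction mod~$2$, using the (correct) observation that when $d$ and $k$ are even the classes $x^2-d\ov y$ and $\tfrac{p_1}{2}(M_f)-k\ov y$ on $M_f$ are, mod~$2$, independent of the choice of $\ov y$. But independence of $\ov y$ is not the issue. The characteristic number $\langle\ov\alpha\cup\ov\beta,[W,\partial W]\rangle$ requires \emph{relative} lifts $\ov\alpha,\ov\beta\in H^4(W,\partial W)$, and for these to exist (and for the resulting number to be independent of the lift, which by Poincar\'e duality on $\partial W$ requires the same thing) one needs $\alpha|_{\partial W}=0$ and $\beta|_{\partial W}=0$ in the coefficient ring. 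For $f\in\T(M)\setminus\K(M)$ this fails even mod~$2$: one has $\alpha|_{M_f}\bmod 2 = x^2\bmod 2 = \delta(\tau_1(f)\bmod 2)$ under the (injective) Wang coboundary $\delta\colon H^3(M;\z/2)\hookrightarrow H^4(M_f;\z/2)$, and since $\tau$ is surjective onto $(H\times H)/\Delta_{d,k/2}$ there are plenty of $f\in\T(M)$ with $\tau_1(f)\not\equiv 0\pmod 2$. (For $\beta$ the obstruction happens to vanish --- $\beta|_{M_f}\bmod 2=w_4(M_f)=v_4(M_f)=0$ on a spin $7$-manifold --- but that does not rescue the definition, because well-definedness of $\langle\alpha\cup\ov\beta,[W,\partial W]\rangle$ under the choice of $\ov\beta$ again forces $\alpha|_{M_f}=0$.) So the ``same characteristic-number construction'' does not produce an invariant on $\T(M)$, and the splitting step needs a different argument --- e.g.\ by showing directly that the relation elements $A_i'^d A_i^{k/2}\in\K(M)$ lie in $[\T(M),\T(M)]$ (equivalently have trivial $s_3$), or by exhibiting an order-$m$ lift of the order-$m$ torsion in $(H\times H)/\Delta_{d,k/2}$.
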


\begin{proof}[Proof of Theorem \ref{thm:ab}, Theorem \ref{thm:h2q}, Corollary \ref{cor:h2} and Corollary \ref{cor:diff0}]
Denote $\Gamma_{r}=\Sp(2r,\z)$, we compute $(\m(M))^{\mathrm{ab}}=H_1(\m(M))$ using the Hochschild-Serre spectral sequence of the exact sequence
$$1 \to \T(M) \to \m(M) \to \Gamma_{r} \to 1$$
The relevant $E_2$-terms are as follows: first of all (c.~f.~\cite[Appendix]{BCRR18})
$$E^2_{1,0}=H_1(\Gamma_{r})=\left \{
\begin{array}{ll}
\z/12 & \textrm{$r=1$} \\
\z/2 & \textrm{$r=2$} \\
0 & \textrm{$r \ge 3$}\end{array} \right.  , \  \ E^2_{2,0}=H_2(\Gamma_{r})=\left \{
\begin{array}{ll}
0 & \textrm{$r=1$} \\
\z \oplus \z/2 & \textrm{$r=2$} \\
\z & \textrm{$r \ge 3$}\end{array} \right.$$
Secondly from the exact sequence of $\Gamma_{r}$-modules
$$0 \to H \to (H\times H)/\Delta_{d,k} \to H/d \to 0$$
and the fact (c.~f.~\cite[Lemma A.2] {Kra19}) that for an abelian group $A$ the co-invariants 
$(\z^{2g} \otimes A)_{\Gamma_{r}} = 0$,
we have $E^2_{0,1}=H_1(\T(M))_{\Gamma_{r}}=0$ except for the case $M$ spin and $d$ even, where $E^{2}_{0,1} = \z/2$.

To determine the differential $d \colon  E^2_{2,0} \to E^2_{0,1}$ (in the case $M$ spin, $d$ even and $r \ge 2$), we compare the spectral sequences of the upper and lower lines of the following commutative diagram
$$ \xymatrix{
\T(\#_{r}(S^{3} \times S^{3})) \ar[r] \ar[d]  & \m(\#_r(S^{3} \times S^{3})) \ar[r] \ar[d] &  \Gamma_{r} \ar[d]^{=} \\
\T(M) \ar[r] &  \m(M) \ar[r] & \Gamma_{r} }$$
where $\T(\#_{r}(S^{3} \times S^{3}))^{\mathrm{ab}}=H$ is the standard $\Gamma_{r}$-module \cite{Kry02}, we see that the differential $d \colon E^2_{2,0}=H_2(\Gamma_r) \to E^2_{0,1}$ factors through $H_2(\Gamma_r) \to H_{\Gamma_{r}} \to E^2_{0,1}$. But $H_{\Gamma_{r}} =0$. Therefore the differential $d \colon E^2_{2,0} \to E^2_{0,1}$ is trivial.

So we have a short exact sequence
$$0 \to E^2_{0,1} \to H_1(\m(M)) \to H_1(\Gamma_r) \to 0$$
When $r \ge 3$, $H_1(\Gamma_r)=0$ therefore $H_1(\m(M))=E^2_{0,1}$. When $r=1$ or $2$, there is a splitting $H_{1}(\Gamma_r)= H_{1}(\m(\#_{r}(S^{3} \times S^{3}))) \to H_{1}(\m(M))$, therefore $H_1(\m(M))=H_1(\Gamma_r) \oplus E^2_{0,1}$. This proves Theorem \ref{thm:ab}.

Since the differential  on $E^2_{2,0}$ is trivial, this term survives to infinity and therefore there is a surjective homomorphism $H_2(\m(M)) \to H_2(\Gamma_r)$. The short exact sequence of groups $\mathrm{Diff}_0(M) \to \mathrm{Diff}(M) \to \m(M)$ induces a fiber bundle $B\mathrm{Diff}_0(M) \to B\mathrm{Diff}(M) \to B\m(M)$ with simply-connected fiber $B\mathrm{Diff}_0(M)$. There is a surjective homomorphism $H_2(B\mathrm{Diff}(M)) \to H_2(\m(M))$. Composed with the surjective homomorphism  $H_2(\m(M)) \to H_2(\Gamma_r)$ we get a surjective homomorphism $H_2(B\mathrm{Diff}(M)) \to H_2(\Gamma_r)$. This proves Corollary \ref{cor:h2}.

To compute $H_2(\m(M));\mathbb Q)$ ($r \ge 3$) we may use the short exact sequence
$$1 \to (H \times H)/\Delta_{d,k} \to \m(M)/\K(M) \to \Gamma_r \to 1$$
since $\K(M)$ is a finite group. The relevant terms in the Hochschild-Serre spectral sequence are
\begin{enumerate}
\item $E^2_{2,0}=H_2(\Gamma_r;\mathbb Q)\cong \mathbb Q$;
\item $E^2_{1,1}=H_1(\Gamma_r;(H \times H)/\Delta_{d,k}\otimes \mathbb Q)=0$ (cf.~\cite[Lemma A.3]{Kra19});
\item \begin{eqnarray*}
E^2_{0,2}& = & H_0(\Gamma_r;H_2((H\times H)/\Delta_{d,k};\mathbb Q)) \\
 & = & (\wedge^2H \otimes \mathbb Q) \otimes_{\Gamma_r} \mathbb Q= \wedge^2 \mathbb Q^{2r} \otimes_{\Sp(2r,\mathbb Q)} \mathbb Q \cong \mathbb Q \end{eqnarray*}
 where the reason for the last isomorphism is that the second exterior power $\wedge^2 \mathbb Q^{2r}$ of the standard representation of $\Sp(2r,\mathbb Q)$ is isomorphic to the direct sum of the trivial representation $\mathbb Q$ and a non-trivial irreducible representation.

\item $E^2_{2,1}=H_2(\Gamma_r;(H \times H)/\Delta_{d,k}\otimes \mathbb Q)=0$ (cf.~\cite[Lemma A.3]{Kra19});

\item $E^2_{3,0}=H_3(\Gamma_r;\mathbb Q)=0$ when $r \ge 3$ (cf.~\cite{Bor74}).
\end{enumerate}
From this we have $E^{\infty}_{2,0} \cong E^{\infty}_{0,2} \cong \mathbb Q$, $E^{\infty}_{1,1}=0$, therefore $H_2(\m(M);\mathbb Q) \cong \mathbb Q^2$.

Corollary \ref{cor:diff0} is a direct consequence of the Leray-Serre spectral sequence of the fiber bundle $B\mathrm{Diff}_0(M) \to B\mathrm{Diff}(M) \to B\m(M)$, and the facts $H_2(\m(M);\mathbb Q) \cong \mathbb Q^2$ and $H_2(B\mathrm{Diff}(M);\mathbb Q) \cong \mathbb Q^4$ (\cite[\S5.3]{GaRW18}).
\end{proof}

\subsection{Other properties}
\begin{thm}\label{thm:center}
The center of $\m(M)$ is $\K(M)$.
\end{thm}
\begin{proof}
Recall that elements in $\K(M)$ are represented by diffeomorphisms in $S^{2} \times D^{4}$ or $D^{4} \widetilde{\times} S^{2}$, hence they commute with the generators of $\m(M)$ represented by the Dehn twists described in Theorem \ref{thm:gen}. Therefore $\K(M)$ is in the center of $\m(M)$. We will show that $\m(M)/\K(M)$ is centerless. By equations (1.11) and (1.12), there is a short exact sequence
$$0 \to H \to \m(M)/\K(M) \to \Sp(\pi_3(M)) \to 1$$
where $\Sp(\pi_3(M))= (\z/d)^{2r} \rtimes \Sp(2r,\z)$. It's easy to compute the center of the semi-direct product $(\z/d)^{2r} \rtimes \Sp(2r,\z)$ --- it is trivial when $d \ne 2$, and is   $\{(0, \pm I)\}$ when $d=2$ (cf.~\cite[Lemma A.2]{Kra19}).

When the center of $\Sp_{3}(\pi_{3}(M))$ is trivial, the center of $\m(M)/\K(M)$ is contained in $H$. The action of $\Sp(2r,\z)$ on $H$ is the standard linear action, with $0$ being the only fixed element. This shows the center of $\m(M) / \K(M)$ is trivial. When the center of $\Sp_{3}(\pi_{3}(M))$ is $(0, \pm I)$,
let $a \in \m(M)/\K(M)$ be an element  whose image in $\Sp(2r,\z)$ is $-I$, then for any $b \in H $, the commutator $[a,b]=(-I)(b)-b=-2b$. Therefore $a$ is not in the center.
\end{proof}

\begin{proof}[Proof of Theorem \ref{thm:resfin}]
To show that $\m(M)/\K(M)$ is virtually torsion free we construct torsion free subgroups of $\m(M)/\K(M)$ of finite index as follows:  Let $\Sp(2r,\z)[m]$ be the principal congruence subgroup of $\Sp(2r,\z)$ of level $m$ ($m \ge 3$),  then it is a torsion-free subgroup of finite index of $\Sp(\pi_3(M))$. Let $(\m(M)/\K(M))[m]$ be the preimage of $\Sp(2r,\z)[m]$ in $\m(M)/\K(M)$, it fits in the exact sequence
$$0 \to H \to (\m(M)/\K(M))[m] \to \Sp(2r,\z)[m] \to 1$$
Therefore $(\m(M)/\K(M))[m]$ is a torsion-free subgroup of finite index in $\m(M)/\K(M)$.

Finally we show that $\m(M)/\K(M)$ is residually finite. From the commutative diagram of exact sequences \ref{diag:module} we see $ \m(\# r(S^{3} \times S^{3}))/\Theta_{7}$ is a subgroup of $\m(M)/\K(M)$ of finite index. It suffices to show that  $\m(\# r(S^{3} \times S^{3}))/\Theta_{7}$ is residually finite. This group is a subgroup of $H \rtimes \Gamma_{r}$ of index $2^{2r}$. Both $H$ and $\Gamma_{r}$ are residually finite, then the semi-direct product $H \rtimes \Gamma_{r}$ is also residually finite (e.~g.~by Theorem II of  \cite{Hew94} which says that an extension of a residually finite group by a residually finite group is residually finite if and only if the extension is residually virtually splittable). Therefore  $\m(\# r(S^{3} \times S^{3}))/\Theta_{7}$ is residually finite, hence also $\m(M)/\K(M)$.
\end{proof}

\section{Appendix: Computation of the bordism groups}
In the appendix we sketch the computation of the bordism groups used in the paper.

Let $P=P_3(S^2)$ be the 3rd stage Postnikov tower of $S^2$, which is a fibration over $K(\z,2)$ with fiber $K(\z,3)$ and $k$-invariant $x^2 \in H^4(K(\z,2))$, where $x \in H^2(K(\z,2))$ is a generator.  Let $L$ be the tautological complex line bundle over $K(\z,2)$, $\eta$ be the pullback line bundle over $P$, whose first Chern class equals a generator of $H^2(P)$. We compute the string bordism groups $\omst_7(P)$ and $\omst_7(P;\eta)$ using the Atiyah-Hirzebruch spectral sequence.

We first compute the (co)-homology groups of $P$. The homology groups of $K(\z,3)$ were computed in the classical paper \cite[Theorem 23.1-23.4]{EiMa}
$$\begin{array}{c|cccccc}
 & 3 & 4 & 5 & 6 & 7 & 8 \\
\hline
H_*(K(\z,3)) & \z & 0 & \z/2 & 0 & \z/3 & \z/2\\
H_*(K(\z,3);\z/2) & \z/2 & 0 & \z/2 & \z/2 & 0 & \z/2 \\
\end{array}$$
Using the Leray-Serre spectral sequence of the fibration $K(\z,3) \to P \to K(\z,2)$ one may compute the homology groups of $P$ in low dimensions
$$\begin{array}{c|ccccccccc}
& 0 & 1& 2& 3& 4& 5 & 6 & 7 & 8\\
\hline
H_*(P) & \z & 0 & \z & 0 & 0 & \z/2 & 0 & \z/2 \oplus \z/3 & \z/2
\end{array}$$
From this we can deduce the cohomology groups with $\z/2$-coefficients
$$\begin{array}{c|ccccccccc}
& 0 & 1& 2& 3& 4& 5 & 6 & 7 & 8\\
\hline
H^*(P;\z/2) & \z/2 & 0 & \z/2 & 0 & 0 & \z/2 & \z/2 & \z/2 & (\z/2)^2
\end{array}$$
Now if we look at the Leray-Serre spectral sequence for the cohomology with $\z/2$-coefficients, the differential $d \colon E_2^{0,6} \to E_2^{2,5}$ must be zero. From this we obtain the cup product structure of $H^*(P;\z/2)$ as follows: let $x \in H^2(P;\z/2)$, $a \in H^5(P;\z/2)$, $b \in H^6(P;\z/2)$ be generators, then $ax \in H^7(P;\z/2)$ is a generator, and $H^8(P;\z/2)$ has generators $bx$ and $c$, where $c$ restricts to a generator of $H^8(K(\z,3);\z/2)$.

We also need the information on the Steenrod operations in $H^*(P;\z/2)$. This is obtained as follows. Let $P'$ be the restriction of $P$ on $S^2 \subset K(\z,2)$. Then from the above spectral sequence we see that $H^*(P;\z/2) \to H^*(P';\z/2)$ are isomorphisms for $*=6,7,8$; and $H^5(P;\z/2) \to H^5(P';\z/2)$ is injective. Since the $k$-invariant of $P'$ is trivial, $P'$ is homotopy equivalent to $S^2 \times K(\z,3)$. Let $x \in H^2(K(\z,2);\z/2)$, $y \in H^3(K(\z,3);\z/2)$ be generators, then $Sq^2y \in H^5(K(\z,3);\z/2)$ and $y^2 \in H^6(K(\z,3);\z/2)$ are generators, and $H^5(P';\z/2)$ has generators $xy$ and $Sq^2y$. The image of $H^5(P;\z/2) \to H^5(P';\z/2)$ is generated by $Sq^2y$. By the Adem relation $Sq^2 Sq^2 y = Sq^3 Sq^1 y =0$. Also $Sq^2y^2=0$. Therefore $Sq^2 \colon H^5(P;\z/2) \to H^7(P;\z/2)$ and $Sq^2 \colon H^6(P;\z/2) \to H^8(P;\z/2)$ are trivial.

Now we are ready to apply the Atiyah-Hirzebruch spectral sequence to compute $\omst_*(P)$ and $\omst_*(P;\eta)$. The string cobordism groups up to dimension $7$ are (the first six groups are the same as the stable stems and the 7-th by \cite{Giam})
$$\begin{array}{c|cccccccc}
& 0 & 1& 2& 3& 4& 5 & 6 & 7 \\
\hline
\omst_* & \z & \z/2 & \z/2 & \z/24 & 0 & 0 & \z/2 & 0
\end{array}$$

In the Atiyah-Hirzebruch spectral sequence of $\omst_*(P)$, the differentials $d_2$ are dual to $Sq^2$ (\cite{Tei93}). Thus $d_2 \colon E^{2}_{7,0} \to E^{2}_{5,1}$, $d_2 \colon E^{2}_{7,1} \to E^{2}_{5,2}$ and $d_2 \colon E^{2}_{8,0} \to E^{2}_{6,1}$ are trivial. Therefore we have exact sequences
$$0 \to G \to \omst_7(P) \to \z/2 \oplus \z/3 \to 0$$
$$ \z/2 \to G \to \z/2 \to 0$$
Therefore we have
\begin{lem}\label{lem:order1}
$|\omst_7(P_3(S^2))| \le 24 $.
\end{lem}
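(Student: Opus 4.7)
The plan is to feed the already-computed (co)homology of $P := P_3(S^2)$ into the Atiyah--Hirzebruch spectral sequence
\[
E^2_{p,q} = H_p(P;\omst_q) \;\Longrightarrow\; \omst_{p+q}(P),
\]
and bound $|\omst_7(P)|$ simply by multiplying the orders of the $E^2$ entries on the total-degree $7$ antidiagonal. This already suffices, since each $E^\infty_{p,q}$ is a subquotient of $E^2_{p,q}$ and $\omst_7(P)$ carries a finite filtration with associated graded $\bigoplus_{p+q=7} E^\infty_{p,q}$.

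The enumeration goes as follows. Using the recalled low-dimensional string bordism groups ($\omst_1 \cong \omst_2 \cong \z/2$, $\omst_3 \cong \z/24$, $\omst_4 = \omst_5 = \omst_7 = 0$, $\omst_6 \cong \z/2$) we see that $E^2_{0,7}$, $E^2_{2,5}$, and $E^2_{3,4}$ vanish for coefficient reasons; $E^2_{1,6} = H_1(P;\z/2) = 0$ since $H_1(P) = 0$ and $H_0(P) \cong \z$ is torsion-free; and $E^2_{4,3} = H_4(P;\z/24) = 0$ by the universal coefficient theorem, both $H_3(P)$ and $H_4(P)$ being zero. The surviving terms are then $E^2_{7,0} = H_7(P) \cong \z/2 \oplus \z/3$, $E^2_{6,1} = H_6(P;\z/2) \cong \z/2$ (contributed by $\mathrm{Tor}(H_5(P),\z/2) \cong \z/2$), and $E^2_{5,2} = H_5(P;\z/2) \cong \z/2$ (from $H_5(P) \cong \z/2$). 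Their total order is $2 \cdot 2 \cdot 6 = 24$, yielding $|\omst_7(P)| \le 24$ with no differential computation needed at all.

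The only real obstacle is bookkeeping: carefully handling the Tor terms in the UCT computations for the $\z/2$- and $\z/24$-coefficient positions, and making sure no antidiagonal entry has been overlooked. If one wanted the stronger equality $|\omst_7(P)| = 24$, one would additionally need to rule out incoming differentials such as $d_2 \colon E^2_{7,1} \to E^2_{5,2}$, $d_2 \colon E^2_{8,0} \to E^2_{6,1}$, and any higher $d_r$ landing on the antidiagonal. This is where the Steenrod-algebra data set up in the excerpt --- the vanishing of $Sq^2$ on $H^5(P;\z/2)$ and $H^6(P;\z/2)$ --- would intervene via the classical identification of the mod-$2$ part of $d_2$ with the dual of $Sq^2$. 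For the stated inequality, however, this refinement is unnecessary.
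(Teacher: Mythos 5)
Your proof is correct and settles the lemma by the same Atiyah--Hirzebruch spectral sequence for $\omst_*(P_3(S^2))$ used in the paper, but you take a genuine shortcut: rather than computing any differentials, you observe that each $E^\infty_{p,q}$ is a subquotient of $E^2_{p,q}$, so $|\omst_7(P)| \le \prod_{p+q=7}|E^2_{p,q}|$, and you evaluate that product to be $2\cdot 2 \cdot 6 = 24$ directly from the tabulated $H_*(P)$ and $\omst_q$ via the universal coefficient theorem. All of your $E^2$ entries on the antidiagonal check out, including the Tor contribution $\mathrm{Tor}(H_5(P),\z/2)\cong\z/2$ for $E^2_{6,1}$ and the vanishing of $E^2_{4,3}=H_4(P;\z/24)$. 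The paper instead first shows (via the identification of $d_2$ with the dual of $Sq^2$) that the relevant $d_2$ are trivial and then reads off two short exact sequences $0\to G\to\omst_7(P)\to\z/2\oplus\z/3\to 0$ and $\z/2\to G\to\z/2\to 0$, from which $|\omst_7(P)|\le 24$ follows. What the paper's argument buys is a more structured description of $\omst_7(P)$ (for instance that $\z/2\oplus\z/3$ is actually a quotient), but none of that additional structure is used downstream --- the subsequent equality $|\ov{\m}(S^2\times D^4,\partial)|=24$ is established by exhibiting explicit elements $N_{-1}$, $N_{-2}$. So for the inequality as stated, your leaner counting argument is entirely sufficient and arguably cleaner.
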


In the Atiyah-Hirzebruch spectral sequence of $\omst_*(P;\eta)$, the differentials $d_2$ are dual to $Sq^2+x$ (\cite{Tei93}), where $x \in H^2(P;\z/2)$ is the nontrivial element. Thus $d_2 \colon E^{2}_{7,1} \to E^{2}_{5,2}$ is an isomorphism, $d_2 \colon E^{2}_{7,0} \to E^{2}_{5,1}$ is surjective and $d_2 \colon E^{2}_{8,0} \to E^{2}_{6,1}$ is trivial. Therefore we have
\begin{lem}\label{lem:order2}
$\omst_7(P_3(S^2);\eta) \cong \z/2 \oplus \z/3$.
\end{lem}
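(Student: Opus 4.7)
The plan is to read $\omst_7(P;\eta)$ off the Atiyah-Hirzebruch spectral sequence $E^2_{p,q} = H_p(P;\omst_q) \Rightarrow \omst_{p+q}(P;\eta)$ by combining the three $d_2$-computations just stated with the obvious vanishings elsewhere. First I would enumerate the nonzero cells on the diagonal $p+q = 7$ using the tabulated $H_*(P)$ together with $\omst_0 = \z$, $\omst_1 = \omst_2 = \z/2$, $\omst_3 = \z/24$, $\omst_4 = \omst_5 = \omst_7 = 0$, $\omst_6 = \z/2$. Only three are nonzero: $E^2_{7,0} = \z/2 \oplus \z/3$, $E^2_{6,1} = H_6(P;\z/2) = \z/2$ and $E^2_{5,2} = H_5(P;\z/2) = \z/2$; the only off-diagonal neighbours that can interact with them via $d_2$ are $E^2_{7,1} = H_7(P;\z/2) = \z/2$ and $E^2_{8,0} = H_8(P) = \z/2$.

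Feeding in the three stated $d_2$-computations yields: $E^\infty_{5,2} = 0$ (the isomorphism from $E^2_{7,1}$ kills both groups); $E^3_{7,0} = \z/3$ (the surjection onto $E^2_{5,1} = \z/2$ kills precisely the mod-$2$ summand of $H_7(P)$, since the cohomological dual $Sq^2 + x$ sends the generator $a$ of $H^5(P;\z/2)$ to $xa$, a generator of $H^7(P;\z/2)$, using $Sq^2 a = 0$); and $E^3_{6,1} = \z/2$ (the triviality of $d_2 \colon E^2_{8,0} \to E^2_{6,1}$ preserves it, and the outgoing $d_2$ lands in $H_4(P;\z/2) = 0$). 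I would then rule out any higher differential touching these three cells: outgoing $d_r$ with $r \ge 3$ from $(5,2), (6,1), (7,0)$ lands in $H_p(P;\omst_q)$ with $p \le 4$, where $H_3(P) = H_4(P) = 0$ makes every candidate vanish, and the only incoming higher differential worth checking, $d_3 \colon E^3_{8,0} \to E^3_{5,2}$, is automatically zero because $E^3_{5,2} = 0$.

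Hence $E^\infty$ on the seventh diagonal consists of $\z/3$ at position $(7,0)$ and $\z/2$ at position $(6,1)$, giving a short exact sequence
$$0 \to \z/2 \to \omst_7(P;\eta) \to \z/3 \to 0,$$
which splits automatically since $\gcd(2,3) = 1$, producing $\omst_7(P;\eta) \cong \z/2 \oplus \z/3$. The substantial work has been done in the three $d_2$-computations above; the main obstacle left is merely the bookkeeping of the higher-differential vanishings, which is handled uniformly by the fact that $H_3(P)$ and $H_4(P)$ are zero.
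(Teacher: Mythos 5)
Your overall strategy is exactly the paper's: read $\omst_7(P;\eta)$ off the Atiyah--Hirzebruch spectral sequence using the three $d_2$-statements preceding the lemma, and observe that the surviving cells assemble to $\z/2 \oplus \z/3$. The enumeration of cells on the $p+q=7$ line, the identification $E^3_{7,0}\cong\z/3$ via $Sq^2 a=0$ and $xa\neq 0$, and the extension argument at the end all match what the paper intends by its terse ``Therefore we have.''

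There is, however, a concrete error in your dismissal of the higher differentials. You assert that every outgoing $d_r$ with $r\ge 3$ from $(5,2)$, $(6,1)$, $(7,0)$ lands in $H_p(P;\omst_q)$ with $p\le 4$ and hence vanishes because $H_3(P)=H_4(P)=0$. But for $r=6$ out of $(6,1)$ and $r=7$ out of $(7,0)$ the target is the cell $(0,6)$, and $E^2_{0,6}=H_0(P;\omst_6)=H_0(P;\z/2)=\z/2$ is not zero; your reasoning (zero target) simply does not apply there. The $d_7$ from $(7,0)$ is forced to vanish anyway, since by page $7$ the source is $E^7_{7,0}\cong\z/3$ while the target is a subgroup of $\z/2$, so a homomorphism $\z/3\to\z/2$ is zero. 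But $d_6\colon E^6_{6,1}\to E^6_{0,6}$ is a potential $\z/2\to\z/2$ which survives to page $6$ on both sides (nothing earlier touches $E^2_{0,6}$ because $E^2_{2,5}$, $E^2_{3,4}$, $E^2_{4,3}$ vanish and $E^3_{5,2}=0$), and the argument you give does not show it is trivial. As written, your chain of deductions yields only the upper bound $\lvert\omst_7(P;\eta)\rvert\le 6$, not the asserted isomorphism. In fairness, the paper's own one-line ``Therefore'' has the same lacuna; the equality is in fact recovered in \S 7.1, where the surjection $\omst_7(P_3(S^2);\eta)\twoheadrightarrow \ov{\m}(D^4\wt S^2,\partial)$ together with the explicit element $N_{-1}$ of order $6$ forces $\omst_7(P_3(S^2);\eta)\cong\z/6$. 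So the statement is correct, but to make the proof self-contained one must either supply an argument that $d_6$ vanishes (e.g.\ by showing the edge map $\omst_6\to\omst_6(P;\eta)$ is injective) or downgrade the lemma to the inequality that the rest of the paper actually uses.
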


Now we apply the Atiyah-Hirzebruch spectral sequence to compute $\omst_{*}(P_{3}(S^{2}) \times K(\z^{2r},3))$ and $\omst_{*}(P_{3}(S^{2}) \times K(\z^{2r},3); \eta)$.
For simplicity denote $K(\z^{2r},3)$ by $K$. Let $x \in H^{2}(P;\z/2)$, $b_{i} \in H^{3}(K;\z/2)$ ($i=1, \cdots, 2r$)  be generators. Then $H^{5}(K;\z/2)$ is generated by $Sq^{2}b_{i}$ ($i =1, \cdots , 2r$). In the spin case in the Atiyah-Hirzebruch spectral sequence the differential $d_{2} \colon E^{2}_{7,1} \to E^{2}_{5,2}$ is dual to $Sq^{2} \colon H^{5}(P \times K ;\z/2) \to H^{7}(P \times K ;\z/2)$. The mixed term in $H^{5}(P \times K ;\z/2)$ is $H^{2}(P;\z/2) \otimes H^{3}(K;\z/2)$, with generators $a \otimes b_{i}$. The mixed term at $E^{2}_{7,0}=H_{7}(P \times K)$ is $H_{2} ( P)\otimes H_{5}(K) \cong (\z/2)^{2r}$, which reduces to $H_{2}(P;\z/2) \otimes H_{5}(K;\z/2)$ isomorphically. Note that $Sq^{2}(x \otimes b_{i})=x \otimes Sq^{2}b_{i}$ are the generators of $H^{2}(P;\z/2) \otimes H^{5}(K;\z/2)$. And at $E^{2}_{6,1} = H_{6}(P \times K;\z/2)$ there is no mixed term. Therefore no mixed terms survive in the spectral sequence. In the non-spin case, in the Atiyah-Hirzebruch spectral sequence for $\omst_{7}(P\times K; \eta)$, the differential $d_{2} \colon E^{2}_{7,1} \to E^{2}_{5,2}$ is dual to $Sq^{2}+x\cup$. Since $x^{2}=0 \in H^{4}(P;\z/2)$, again we have $(Sq^{2}+x \cup)(x \otimes b_{i})=x \otimes Sq^{2}b_{i}$. Therefore we have
\begin{lem}\label{lem:string}
There are isomorphisms
$$\omst_{7}(P_{3}(S^{2})) \oplus \omst_{7}(K(\z^{2r}, 3)) \to \omst_{7}(P_{3}(S^{2}) \times K(\z^{2r},3))$$
$$\omst_{7}(P_{3}(S^{2});\eta) \oplus \omst_{7}(K(\z^{2r}, 3)) \to \omst_{7}(P_{3}(S^{2}) \times K(\z^{2r},3);\eta)$$

\end{lem}

Finally we compute the twisted spin bordism group $\omsp_8(\cp^{\infty} \times K(\z,4);L)$, where $L$ is the tautological complex line bundle over $\cp^{\infty}$. This group can be identified with
$$\pi_{10}^S(M\mathrm{Spin} \wedge \mathrm{Th}(L))= \widetilde{\omsp_{10}} (\mathrm{Th}(L))=\omsp_{10}(\cp^{\infty} \times K(\z,4))/\omsp_{10}( \mathrm{pt}\times K(\z,4)).$$
We apply the Atiyah-Hirzebruch-Leray-Serre spectral sequence to the fiber bundle $K(\z,4) \to \cp^{\infty} \times K(\z,4) \to \cp^{\infty}$. By a routine calculation with the Atiyah-Hirzebruch spectral sequence we have $\widetilde{\omsp_i}(K(\z,4)) = 0$ for $i \le 7$ and $i \ne 4$, $\widetilde{\omsp_4}(K(\z,4)) \cong H_4(K(\z,4)) \cong \z$, $\widetilde{\omsp_8}(K(\z,4)) \cong \z \oplus \z$ (c.~f.~\cite[Theorem 8]{Kr18}). From this data, by a straightforward computation one has
$$\begin{array}{cl}
 & \omsp_8(\cp^{\infty} \times K(\z,4);L)   \\
\cong & \omsp_8 \oplus \widetilde{\omsp_8}(\cp^{\infty};L) \oplus \widetilde{\omsp_8}(K(\z,4)) \oplus H_4(\cp^{\infty}) \otimes H_4(K(\z,4))
\end{array}$$
where the last factor $H_4(\cp^{\infty}) \otimes H_4(K(\z,4)) \cong \z$ is generated by the natural inclusion $\cp^2 \times S^4 \to \cp^{\infty} \times K(\z,4)$.


\end{document}